\tikzstyle{nodo}=[circle,draw,fill,inner sep=0pt,minimum size=%
\tikzstyle{infinito}=[circle,inner sep=0pt,minimum size=0mm]
\newtheorem{theorem}{Theorem}[section]
\newtheorem{lemma}[theorem]{Lemma}
\newtheorem{proposition}[theorem]{Proposition}
\newtheorem{corollary}[theorem]{Corollary}
\theoremstyle{remark}
\newtheorem{remark}[theorem]{Remark}
\theoremstyle{definition}
\newcommand\R{{\mathbb R}}
\newcommand\Nat{{\mathbb N}}
\newcommand\dx{{\,dx}}
\newcommand\NN{{\mathcal N}}
\newcommand\JJ{{\mathcal J}}
\newcommand\ee{{\mathcal E}}
\newcommand\eps{\varepsilon}
\newcommand\mud{{\mu_d}}
\newcommand\Omegan{{\Omega_n}}
\title[Non-uniqueness of normalized NLS ground states]{Non-uniqueness of normalized NLS ground states on polygons with homogeneous Neumann boundary conditions}
\author[S. Dovetta]{Simone Dovetta}
\address{S. Dovetta: Politecnico di Torino, Dipartimento di Scienze Matematiche ``G.L. Lagrange'', Corso Duca degli Abruzzi, 24, 10129, Torino, Italy}
\email{simone.dovetta@polito.it}
\author[E. Serra]{Enrico Serra}
\address{E. Serra: Politecnico di Torino, Dipartimento di Scienze Matematiche ``G.L. Lagrange'', Corso Duca degli Abruzzi, 24, 10129, Torino, Italy}
\email{enrico.serra@polito.it}
\author[L. Tentarelli]{Lorenzo  Tentarelli}
\address{L. Tentarelli: Politecnico di Torino, Dipartimento di Scienze Matematiche ``G.L. Lagrange'', Corso Duca degli Abruzzi, 24, 10129, Torino, Italy}
\email{lorenzo.tentarelli@polito.it}
\date{\today}
\subjclass[2020]{49K99, 35A02, 35A15}
\keywords{Non-uniqueness, NLS energy, ground states, Neumann conditions}
\begin{document}

\begin{abstract}
We provide a non-uniqueness result for normalized ground states of nonlinear Schr\"odinger equations with pure power nonlinearity on polygons with homogeneous Neumann boundary conditions, defined as global minimizers of the associated energy functional among functions with prescribed mass. Precisely, for nonlinearity powers slightly smaller than the $L^2$-critical exponent,  we prove that there always exists at least one value of the mass for which normalized ground states are not unique. 
\end{abstract}

\maketitle

%%%%%%%%%%%%%%%%%%%%%%%%%%%%%%%%%%%%%%%%%%%%%%%%%%%%%%%%%%%%%%%%%%%%%%%%%%%%%%%%%%%%%%%%%%%%%%%%%%%%%%%%%%%%%%%%%%%%%%%%%%%%%%%%%%%%%%%%%%%%%%%%%%%%%%%%%%%%%%%%%%%%%%%%%%%%%%%%%%%%%%%%%%%%%%%%%%%%%%%%%%%%%%%%%%%%%%%%%%%%%%%%%%%%
%%%%%%%%%%%%%%%%%%%%%%%%%%%%%%%%%%%%%%%%%%%%%%%%%%%%%%%%%%%%%%%%%%%%%%%%%%%%%%%%%%%%%%%%%%%%%%%%%%%%%%%%%%%%%%%%%%%%%%%%%%%%%%%%%%%%%%%%%%%%%%%%%%%%%%%%%%%%%%%%%%%%%%%%%%%%%%%%%%%%%%%%%%%%%%%%%%%%%%%%%%%%%%%%%%%%%%%%%%%%%%%%%%%%
%%%%%%%%%%%%%%%%%%%%%%%%%%%%%%%%%%%%%%%%%%%%%%%%%%%%%%%%%%%%%%%%%%%%%%%%%%%%%%%%%%%%%%%%%%%%%%%%%%%%%%%%%%%%%%%%%%%%%%%%%%%%%%%%%%%%%%%%%%%%%%%%%%%%%%%%%%%%%%%%%%%%%%%%%%%%%%%%%%%%%%%%%%%%%%%%%%%%%%%%%%%%%%%%%%%%%%%%%%%%%%%%%%%%
%%%%%%%%%%%%%%%%%%%%%%%%%%%%%%%%%%%%%%%%%%%%%%%%%%%%%%%%%%%%%%%%%%%%%%%%%%%%%%%%%%%%%%%%%%%%%%%%%%%%%%%%%%%%%%%%%%%%%%%%%%%%%%%%%%%%%%%%%%%%%%%%%%%%%%%%%%%%%%%%%%%%%%%%%%%%%%%%%%%%%%%%%%%%%%%%%%%%%%%%%%%%%%%%%%%%%%%%%%%%%%%%%%%%

\section{Introduction}

Given a polygon $\Omega\subseteq\R^2$, in this paper we consider normalized solutions of nonlinear Schr\"odinger equations in $\Omega$ with homogeneous Neumann conditions at the boundary $\partial\Omega$, i.e. solutions of the problem
\begin{equation}
	\label{eq:normNLS}
	\begin{cases}
		-\Delta u+\lambda u=|u|^{p-2}u & \text{in }\Omega\\
	\frac{\partial u}{\partial \nu}=0 & \text{on }\partial\Omega\\
	\|u\|_{L^2(\Omega)}^2=\mu\,. & 
\end{cases}
\end{equation}
Here, $p>2$, $\lambda\in\R$, $\mu>0$ are parameters of the problem, and $\nu$ is the unit outward normal vector at the boundary $\partial\Omega$ (except at the vertices). Solutions of \eqref{eq:normNLS} are called {\em normalized} because of the constraint on the $L^2$ norm of $u$, whose square is usually referred to as the {\em mass} of the solution.

Normalized solutions of nonlinear Schr\"odinger equations arise in a wide variety of applications, ranging e.g. from the mathematical modeling of Bose-Einstein condensates to Mean Field Games. Even though the first paper considering such solutions in $\R^d$ is perhaps \cite{CL82}, it is after \cite{J97} that the topic started collecting a constantly increasing interest. By now, the literature on the subject has grown enormously and the problem has been studied in many different settings (see e.g. \cite{BdV13, BJ, BJS, BS17, BS19, GJ, JL-22, S1, S2} for problems posed on the whole space $\R^d$, \cite{DDGS, DST-23, NTV-14, NTV15, NTV19, PV, PVY, SZ1, SZ2} for bounded domains, and \cite{AST-cmp, AST-cvpde, AST-16, ACT, BCJS, CJS, LLZ, NP, PS, PSV, ST, T} for metric graphs).

It is well-known that solutions of \eqref{eq:normNLS} are critical points of the {\em energy} functional $E_p(\cdot,\Omega):H^1(\Omega)\to\R$,
\begin{equation}
\label{eq-NLSenergy}
E_p(u,\Omega):=\frac12\|\nabla u\|_{L^2(\Omega)}^2-\frac1p\|u\|_{L^p(\Omega)}^p\,,
\end{equation}
constrained to the space of functions with fixed mass
\begin{equation}
\label{eq-vincolo}
H_\mu^1(\Omega):=\left\{u\in H^1(\Omega)\,:\,\|u\|_{L^2(\Omega)}^2=\mu\right\}.
\end{equation}
Note that, when a solution $u$ of \eqref{eq:normNLS} is found as a critical point of $E_p$ in $H_\mu^1(\Omega)$, the parameter $\lambda$ arises as a Lagrange multiplier associated to the mass constraint and depends on $u$ itself, as it is given by
\begin{equation}
	\label{eq-lambda}
	\lambda=\lambda_u:=\frac{\|u\|_{L^p(\Omega)}^p-\|\nabla u\|_{L^2(\Omega)}^2}{\|u\|_{L^2(\Omega)}^2}\,.
\end{equation}
Among all possible critical points of $E_p$, of particular interest are usually the so-called normalized, or energy, {\em ground states}, defined as global minimizers of $E_p$ in $H_\mu^1(\Omega)$, i.e. functions $u\in H_\mu^1(\Omega)$ such that
\begin{equation}
\label{eq-energygs}
E_p(u,\Omega)=\inf_{v\in H_\mu^1(\Omega)}E_p(v,\Omega)=:\ee_p(\mu,\Omega)\,.
\end{equation}
When $\Omega$ is a bounded subset of $\R^2$, it is straightforward to see that the Gagliardo-Nirenberg inequality
\begin{equation}
	\label{GN-bounded}
	\|u\|_{L^p(\Omega)}^p\lesssim_{\,p} \|u\|_{L^2(\Omega)}^{2}\,\|u\|_{H^1(\Omega)}^{p-2},\qquad\forall u\in H^1(\Omega)\,,
\end{equation}
guarantees the existence of energy ground states in $H_\mu^1(\Omega)$ for every $\mu>0$ and every $L^2$-subcritical power $p\in(2,4)$.
The $L^2$-critical exponent $p=4$ is known to be a sharp threshold on the nonlinearity for the behaviour of the energy ground state problem in two dimensions. Indeed, it can be easily proved that $\ee_p(\mu,\Omega)=-\infty$ for every $\mu>0$ and every $\Omega\subseteq\R^2$ when $p>4$, whereas the boundedness of $\ee_p$ and the existence of energy ground states when $p=4$ depends on the actual value of $\mu$ (see Proposition \ref{egs} below).

Since existence of energy ground states for $p<4$ is essentially settled, it is natural to wonder whether energy ground states at given mass are unique. Unsurprisingly, nothing is known about uniqueness of energy ground states with Neumann boundary conditions. In fact, the only uniqueness results we are aware of concern the full space $\R^d$ (see \cite{K-89}) and the ball with homogeneous Dirichlet boundary conditions (see \cite{NTV-14}). In both cases, however, a crucial role is played by the uniqueness of the positive solution of the NLS equation 
\begin{equation}
	\label{eq-NLS}
	-\Delta u+\lambda u=|u|^{p-2}u
\end{equation}
for fixed $\lambda$, a feature not available in a general bounded set. Besides, even knowing that \eqref{eq-NLS} admits a unique positive solution for every value of $\lambda$ would not be enough to ensure uniqueness of energy ground states since, as already pointed out, two critical points of $E_p$ with the same mass may well satisfy \eqref{eq:normNLS} with different values of $\lambda$.

The aim of the present paper is to provide the following negative answer to the question of uniqueness of energy ground states on polygons with homogeneous Neumann boundary conditions.
\begin{theorem}
\label{thm-main}
Let $\Omega\subset\R^2$ be a polygon. Then, there exists $\eps>0$ (depending on $\Omega$) such that, for every $p\in(4-\eps,4)$, there exists $\mu_p>0$ and $u_1,\,u_2\in H_{\mu_p}^1(\Omega)$ such that
\[
 u_1\neq u_2\qquad\text{and}\qquad E_p(u_1,\Omega)=E_p(u_2,\Omega)=\ee_p(\mu_p,\Omega),
\]
namely energy ground states of $E_p$ in $H_{\mu_p}^1(\Omega)$ are not unique.
\end{theorem}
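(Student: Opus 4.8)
The plan is to argue by contradiction. Fix $p\in(4-\eps,4)$, with $\eps$ small and depending on $\Omega$ in a way to be made precise, and suppose that for every $\mu>0$ the ground state at mass $\mu$ is unique; call it $u_\mu$. First I would record the standard facts: $\mu\mapsto\ee_p(\mu,\Omega)$ is continuous and strictly decreasing, minimizing sequences are bounded in $H^1(\Omega)$ by the subcritical Gagliardo--Nirenberg inequality \eqref{GN-bounded}, vanishing is impossible on a bounded set, and dichotomy is excluded by the strict subadditivity of $\ee_p^{\R^2}(\nu)=c_p\nu^{2/(4-p)}$ ($c_p<0$); hence under the uniqueness assumption $\mu\mapsto u_\mu$ is continuous in $H^1(\Omega)$. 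The two families of competitors I would pit against each other are: the constant $\phi_\mu:=\sqrt{\mu/|\Omega|}$, always an admissible critical point with $E_p(\phi_\mu,\Omega)=-\tfrac1p|\Omega|^{1-p/2}\mu^{p/2}$ (the unique ground state for all small $\mu$, by \eqref{GN-bounded} together with the second variation, which makes $\phi_\mu$ a strict local minimizer as long as $(p-2)(\mu/|\Omega|)^{(p-2)/2}<\lambda_2(\Omega)$, the first nonzero Neumann eigenvalue); and a \emph{corner bubble} at a sharpest vertex of $\Omega$, namely the function obtained by transplanting onto a small sector of opening $\theta_{\min}$ (the smallest interior angle) the radial profile of the radial $\R^2$-minimizer of $E_p$ at mass $\tfrac{2\pi}{\theta_{\min}}\mu$, suitably cut off --- admissible, compatible with the Neumann conditions on the two incident edges, and with energy equal, up to a small cut-off error, to $\beta_p(\mu):=\tfrac{\theta_{\min}}{2\pi}\,\ee_p^{\R^2}\!\big(\tfrac{2\pi}{\theta_{\min}}\mu\big)$. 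Set $\bar\mu:=\tfrac{\theta_{\min}}{2\pi}\,\|Q\|_{L^2(\R^2)}^2$, the $L^2$-critical mass of $\Omega$ from Proposition~\ref{egs}, $Q$ being the positive radial solution of $-\Delta Q+Q=Q^3$ on $\R^2$.

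The key quantitative observation is the behaviour as $p\uparrow4$: since $\ee_p^{\R^2}(\nu)\to0$ for $\nu<\|Q\|_{L^2(\R^2)}^2$ and $\ee_p^{\R^2}(\nu)\to-\infty$ for $\nu>\|Q\|_{L^2(\R^2)}^2$, the corner-bubble energy $\beta_p(\mu)$ converges to the step function which is $0$ on $(0,\bar\mu)$ and $-\infty$ on $(\bar\mu,+\infty)$, while $E_p(\phi_\mu,\Omega)$ converges to the finite, strictly negative curve $-\mu^2/(4|\Omega|)$. Hence, for $p$ close enough to $4$: for a mass $\mu_-$ slightly below $\bar\mu$ one has $\ee_p(\mu_-,\Omega)\le E_p(\phi_{\mu_-},\Omega)<-c$ for a fixed $c=c(\Omega)>0$, and moreover $\ee_p(\mu_-,\Omega)$ stays bounded from below (below the critical mass, concentration cannot occur, by the sharp Gagliardo--Nirenberg inequality at the sharpest corner), so the ground state $u_{\mu_-}$ must be \emph{diffuse}, e.g.\ $\|\nabla u_{\mu_-}\|_{L^2(\Omega)}^2\le K$ for a constant $K=K(\Omega)$; whereas for a mass $\mu_+$ slightly above $\bar\mu$ the corner bubble forces $\ee_p(\mu_+,\Omega)\le\beta_p(\mu_+)\to-\infty$, and a configuration with $\|\nabla u\|_{L^2(\Omega)}^2\le K+1$ has energy bounded from below (again by \eqref{GN-bounded}), so $u_{\mu_+}$ must be \emph{concentrated}: $\|\nabla u_{\mu_+}\|_{L^2(\Omega)}^2>K+1$.

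Since $\mu\mapsto u_\mu$ is continuous, so is $\mu\mapsto\|\nabla u_\mu\|_{L^2(\Omega)}^2$, and by the intermediate value theorem there is $\mu_p\in(\mu_-,\mu_+)$ at which it equals $K+\tfrac12$. The contradiction comes from showing that no ground state at a mass near $\bar\mu$ may have gradient norm in the band $(K,K+1)$ --- there is no ``partially concentrated'' ground state. The clean route, which I also expect to be the main obstacle, is a profile decomposition: up to a vanishing error, every $u\in H^1_\mu(\Omega)$ with $\mu$ near $\bar\mu$ decomposes into a diffuse part of mass $\mu-m$ and a single corner bubble of mass $m\in[0,\mu]$, whence
\[\ee_p(\mu,\Omega)=\min_{0\le m\le\mu}\Big[\ee_p^{\mathrm{diff}}(\mu-m,\Omega)+\tfrac{\theta_{\min}}{2\pi}\,\ee_p^{\R^2}\!\big(\tfrac{2\pi}{\theta_{\min}}m\big)\Big]+o(1),\]
where $\ee_p^{\mathrm{diff}}(\mu-m,\Omega)$ behaves to leading order like $-(\mu-m)^2/(4|\Omega|)$; both summands are then \emph{concave} in $m$ (the second because $c_p\nu^{2/(4-p)}$ is concave for $c_p<0$), so the minimum is realized at $m=0$ or $m=\mu$. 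Therefore, near $\bar\mu$, a ground state is (up to small errors) either the diffuse minimizer (gradient $\le K$) or a full corner bubble (gradient $>K+1$): the band is forbidden, and the existence of $u_{\mu_p}$ is absurd. Equivalently --- and this is how one exhibits $u_1,u_2$ explicitly --- the curves $\mu\mapsto E_p(\phi_\mu,\Omega)$ and $\mu\mapsto\beta_p(\mu)$ cross at some $\mu_p$ near $\bar\mu$, and at $\mu_p$ the decomposition shows that both the (nearly constant) diffuse minimizer and the corner bubble achieve $\ee_p(\mu_p,\Omega)$; they are manifestly distinct.

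The difficulties are concentrated in this last step: making the decomposition rigorous --- a concentration-compactness analysis showing that the only loss of compactness for $\ee_p(\cdot,\Omega)$ near $\bar\mu$ is a single bubble at a \emph{sharpest} corner, and that splitting mass among several corners, or onto a smooth boundary point or an interior point, or using a bubble at a non-optimal scale, is strictly less efficient by a non-vanishing amount --- and controlling the error terms \emph{uniformly} in $\mu$ near $\bar\mu$ as $p\uparrow4$. One must also check that past the local-minimality threshold of $\phi_\mu$ the diffuse minimizer, though no longer exactly constant, still has energy $-\mu^2/(4|\Omega|)+o(1)$, so that the crossing with the bubble branch genuinely occurs. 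The remaining ingredients --- continuity and monotonicity of $\ee_p(\cdot,\Omega)$, the transplantation estimate for the corner bubble, and the limiting values of $\ee_p^{\R^2}$ as $p\uparrow4$ --- are standard, and Proposition~\ref{egs} supplies the identification of $\bar\mu$.
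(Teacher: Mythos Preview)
Your approach is genuinely different from the paper's and captures the correct phenomenology --- near the critical mass $\overline\mu_{\overline\alpha}$ there is a transition between diffuse and concentrated ground states, and at the transition two distinct minimizers should coexist --- but the route you choose is substantially harder, and the gaps you yourself flag are the heart of the matter rather than technicalities.

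The paper avoids profile decompositions entirely. It works through the duality between energy and action ground states (Lemma~\ref{lem-energyaction}) and tracks only scalar quantities: the action level $\JJ_p(\lambda,\Omega)$ and the extremal masses $M_p^\pm(\lambda)$ of action ground states at frequency $\lambda$. The core step is to show that at $p=4$ the map $\lambda\mapsto M_4^\pm(\lambda)$ is \emph{not} monotone. This comes from a short integration argument: the energy ground state $\overline u$ at the critical mass $\overline\mu_{\overline\alpha}$ (which exists by Proposition~\ref{egs}) is an action ground state at some $\overline\lambda$; if one had $M_4^\pm(\lambda)\equiv\overline\mu_{\overline\alpha}$ for all $\lambda>\overline\lambda$ outside a null set, then $\JJ_4'(\lambda,\Omega)=\overline\mu_{\overline\alpha}/2$ a.e.\ (Proposition~\ref{agsprop}), and integrating yields $\JJ_4(\lambda,\Omega)-\JJ_4(\lambda,\Sigma_{\overline\alpha})=\ee_4(\overline\mu_{\overline\alpha},\Omega)<0$ for all large $\lambda$, contradicting Proposition~\ref{asymp}(i). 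The non-monotonicity is then pushed to nearby $p$ by semicontinuity of $M_p^\pm$ in $p$ (Lemma~\ref{semic}), and a brief contradiction with the strict monotonicity of $\mu\mapsto\Lambda_p^\pm(\mu)$ under the uniqueness hypothesis (Proposition~\ref{Wne0}) finishes the proof.

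What the paper's route buys is that it never needs to say what a ground state looks like; your route requires a structure theorem. The decomposition formula you write for $\ee_p(\mu,\Omega)$ is not available off the shelf: you would need a concentration--compactness analysis on a polygon identifying all possible bubbles (interior, edge, and corner, at every vertex), quantifying that only the sharpest corner is energetically relevant, and controlling the interaction and cut-off errors uniformly in $\mu$ over a full neighbourhood of $\overline\mu_{\overline\alpha}$. Even granting that, your forbidden-band argument has a loose end: knowing that the optimal bubble mass $m$ lies at an endpoint up to $o(1)$ in \emph{energy} does not by itself pin down $\|\nabla u\|_{L^2(\Omega)}^2$ up to $o(1)$, because on the bubble branch the gradient norm depends on the bubble \emph{scale}, and at $p=4$ all scales have the same energy --- so near $p=4$ the scale selection, and hence the gradient norm, is delicate to control. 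The paper's argument sidesteps all of this by never touching the shape of minimizers.
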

The proof of Theorem \ref{thm-main} is based on a strategy developed in \cite{D-24} in the context of metric graphs. The idea is to show that, at least for one mass $\mu_p$, there are two energy ground states with mass $\mu_p$ that solve \eqref{eq-NLS} for two distinct values of $\lambda$, which then entails that they cannot coincide. We remark that, even though a priori this may seem the easiest way to obtain the non-uniqueness we seek, the occurrence of two energy ground states with equal mass but different $\lambda$ is actually quite surprising. Indeed, it has been shown in \cite[Theorem 2.5]{DST-20} (the proof therein is in the context of metric graphs, but it extends verbatim to our setting here) that, for all but at most countably many masses, all energy ground states with the same mass solve \eqref{eq:normNLS} with the same $\lambda$. Therefore, our proof of Theorem \ref{thm-main} reduces to showing that, at least for the values of $p$ covered by the theorem, this at most countable set of masses is not empty. This phenomenon is based essentially on two main elements: the fact, proved in \cite{DST-23}, that energy ground states are also ground states of the action functional on the associated Nehari manifold (see Section \ref{sec-actionGS} below for the detailed definitions), and the behaviour of energy and action ground states at the $L^2$-critical power $p=4$. In particular, with Proposition \ref{egs} below we prove that, when $p=4$, energy ground states on a polygon exist if and only $\mu\in(0,\overline\mu_{\overline\alpha}]$, where $\overline\alpha$ is the amplitude of the smallest internal angle of $\Omega$ and
\[
\overline\mu_{\overline\alpha}:=\frac{\overline\alpha}{2\pi}\overline\mu\,,
\]
with $\overline\mu>0$ a fixed constant independent of $\Omega$ (see \eqref{eq:muR2} below). The existence of $L^2$-critical energy ground states for a whole interval of masses {\em closed on the right} implies the existence of two $L^2$-critical action ground states such that the one with the smallest $\lambda$ has the largest mass. In other words, the mass of action ground states is not monotone in $\lambda$ when $p=4$. By continuity, this absence of monotonicity is preserved for slightly $L^2$-subcritical powers and this is in turn enough to obtain two energy ground states with equal mass and different values of $\lambda$.  

We highlight that in our proof of Theorem \ref{thm-main} Neumann boundary conditions are crucial to ensure that $L^2$-critical energy ground states exist {\em also} at the critical mass $\overline\mu_{\overline\alpha}$. This marks a sharp difference with respect to the analogous problem with homogeneous Dirichlet boundary conditions, for which it is well-known that $L^2$-critical energy ground states never exist at the critical mass (see e.g. \cite[Section 2]{DST-23}). In fact, since we already recalled that in the Dirichlet case energy ground states are known to be unique for every mass at least on the ball and on the whole $\R^2$, one may wonder whether the non-existence phenomenon unravelled in Theorem \ref{thm-main} is actually peculiar of Neumann boundary conditions and never occurs with Dirichlet ones. However, at present we are not able to extend the result of Theorem \ref{thm-main} to smooth domains. Indeed, when $\Omega$ is a smooth open bounded subset of $\R^2$, arguing as in the present paper it is easy to see that when $p=4$ energy ground states exist for every $\mu\in(0,\overline\mu_\pi)$, but we do not know whether this is true also when $\mu=\overline\mu_\pi$. 

Roughly speaking, the reason why the analysis at the critical mass is technically more delicate on smooth domains than on polygons is the following. In Section \ref{sec-energyGS} below, to prove that energy ground states with mass $\overline\mu_{\overline\alpha}$ do exist on a polygon whose smallest internal angle has amplitude $\overline\alpha$, we essentially show that any sequence of energy ground states with masses approaching $\overline\mu_{\overline\alpha}$ from below is precompact (and thus converge up to subsequences to a ground state with the critical mass). To do this, we argue by contradiction and assume that there exists a non-converging sequence of ground states with $\mu\to\overline\mu_{\overline\alpha}^-$. Adapting standard asymptotic analyses as the one developed e.g. in \cite{NT1} then shows that these ground states concentrate at the vertex of $\partial\Omega$ with angle $\overline\alpha$ and are exponentially small out of a suitable neighbourhood of that vertex. Up to 
$H^1$-negligible corrections, it is then possible to think of these ground states as $H^1$ functions supported on an infinite sector $\Sigma_{\overline\alpha}$ in $\R^2$ with amplitude $\overline\alpha$ (see \eqref{eq-Calpha} for the precise definition of $\Sigma_{\overline\alpha}$), and the available characterization of the ground states problem on sectors (see Section \ref{sec:prel} below) provides the desired contradiction. Note that, when passing from the polygon to the sector, no boundary correction term arises, since locally around a vertex the boundaries of the two sets coincide. Suppose now we try to adapt the same argument on a smooth domain. It is then easy to understand that the concentration of ground states at any point of the boundary would require a comparison with the corresponding ground states problem on a half-plane (i.e. an infinite sector $\Sigma_\pi$ with amplitude $\pi$). However, since the boundary of the set need not be flat, additional correction terms rooted in the discrepancy between $\partial\Omega$ and the boundary of the half-plane will appear. It turns out that, in general, these boundary correction terms are of higher order and sensibly affect the problems we are considering. A concrete instance of this fact can be seen e.g. comparing the asymptotic expansion we derive in Proposition \ref{asymp} below for the action ground state level at large frequencies on polygons with the analogous one obtained in \cite[Proposition 2.1]{NT2}: on polygons, the difference between this level and the corresponding one on $\Sigma_{\overline\alpha}$ is exponentially small as the frequency diverges, whereas on smooth domains the difference with the level on the half-space grows as the square root of the frequency. Similar estimates for the magnitude of the boundary correction terms on smooth domains have been obtained also in the context of normalized solutions for instance in \cite{PPVV}. Since the presence of these higher order correction terms does not allow us to recover the compactness we need, the existence of energy ground states with critical mass when $p=4$ on smooth domains of $\R^2$ (and more generally in $\R^n$ at the corresponding $L^2$-critical power), and thus the non-uniqueness of ground states at slightly $L^2$-subcritical powers as in Theorem \ref{thm-main}, remains at present an interesting open problem.

To conclude, we note that even though Theorem \ref{thm-main} is stated only for polygons, in view of the above discussion about smooth domains and of the fact that $\overline\mu_{\overline\alpha}<\overline\mu_\pi$ whenever $\overline\alpha\in(0,\pi)$, the argument developed in this paper generalizes to any open bounded subset $\Omega\subset\R^2$ with piecewise smooth boundary containing an angle with amplitude strictly smaller than $\pi$.

%%%%%%%%%%%%%%%%%%%%%%%%%%%%%%%%%%%%%%%%%%%%%%%%%%%%%%%%%%%%%%%%%%%%%%%%%%%%%%%%%%%%%%%%%%%%%%%%%%%%%%%%%%%%%%%%%%%%%%%%%%%%%%%%%%%%%%%%%%%%%%%%%%%%%%%%%%%%%%%%%%%%%%%%%%%%%%%%%%%%%%%%%%%%%%%%%%%%%%%%%%%%%%%%%%%%%%%%%%%%%%%%%%%%
%%%%%%%%%%%%%%%%%%%%%%%%%%%%%%%%%%%%%%%%%%%%%%%%%%%%%%%%%%%%%%%%%%%%%%%%%%%%%%%%%%%%%%%%%%%%%%%%%%%%%%%%%%%%%%%%%%%%%%%%%%%%%%%%%%%%%%%%%%%%%%%%%%%%%%%%%%%%%%%%%%%%%%%%%%%%%%%%%%%%%%%%%%%%%%%%%%%%%%%%%%%%%%%%%%%%%%%%%%%%%%%%%%%%

\subsection{Organization of the paper}
\label{subsec-organization}

The paper is organized as follows:
\begin{itemize}
	\item in Section \ref{sec:prel} we collect some preliminary results needed in the rest of the paper;
	\item in Section \ref{sec-actionGS} we deal with the auxiliary problem of the action ground states on the Nehari manifold; 
	\item in Section \ref{sec-energyGS} we discuss existence of energy ground states on polygons in the $L^2$-critical case;
	\item in Section \ref{sec-proof} we prove Theorem \ref{thm-main}.
\end{itemize}

%%%%%%%%%%%%%%%%%%%%%%%%%%%%%%%%%%%%%%%%%%%%%%%%%%%%%%%%%%%%%%%%%%%%%%%%%%%%%%%%%%%%%%%%%%%%%%%%%%%%%%%%%%%%%%%%%%%%%%%%%%%%%%%%%%%%%%%%%%%%%%%%%%%%%%%%%%%%%%%%%%%%%%%%%%%%%%%%%%%%%%%%%%%%%%%%%%%%%%%%%%%%%%%%%%%%%%%%%%%%%%%%%%%%
%%%%%%%%%%%%%%%%%%%%%%%%%%%%%%%%%%%%%%%%%%%%%%%%%%%%%%%%%%%%%%%%%%%%%%%%%%%%%%%%%%%%%%%%%%%%%%%%%%%%%%%%%%%%%%%%%%%%%%%%%%%%%%%%%%%%%%%%%%%%%%%%%%%%%%%%%%%%%%%%%%%%%%%%%%%%%%%%%%%%%%%%%%%%%%%%%%%%%%%%%%%%%%%%%%%%%%%%%%%%%%%%%%%%
%%%%%%%%%%%%%%%%%%%%%%%%%%%%%%%%%%%%%%%%%%%%%%%%%%%%%%%%%%%%%%%%%%%%%%%%%%%%%%%%%%%%%%%%%%%%%%%%%%%%%%%%%%%%%%%%%%%%%%%%%%%%%%%%%%%%%%%%%%%%%%%%%%%%%%%%%%%%%%%%%%%%%%%%%%%%%%%%%%%%%%%%%%%%%%%%%%%%%%%%%%%%%%%%%%%%%%%%%%%%%%%%%%%%
%%%%%%%%%%%%%%%%%%%%%%%%%%%%%%%%%%%%%%%%%%%%%%%%%%%%%%%%%%%%%%%%%%%%%%%%%%%%%%%%%%%%%%%%%%%%%%%%%%%%%%%%%%%%%%%%%%%%%%%%%%%%%%%%%%%%%%%%%%%%%%%%%%%%%%%%%%%%%%%%%%%%%%%%%%%%%%%%%%%%%%%%%%%%%%%%%%%%%%%%%%%%%%%%%%%%%%%%%%%%%%%%%%%%

\section{Preliminaries}
\label{sec:prel}

We start by recalling some basic notions we will largely use all along the paper and by establishing some preliminary results that will play an important role in the rest of our discussion.

In the present paper we exploit two different notions of ground states for nonlinear Schr\"odinger equations. As already stated in the Introduction, the first notion is that of {\em energy} ground states as in \eqref{eq-energygs}, defined as global minimizers of the energy $E_p$ given in \eqref{eq-NLSenergy} among all functions with fixed $L^2$ norm. The second notion is that of {\em action} ground states.
Given an open set $\Omega\subseteq\R^2$, a power $p>2$, and a real number $\lambda \in \R$, the NLS action functional at frequency $\lambda$ is the functional $J_p(\cdot,\lambda,\Omega) : H^1(\Omega) \to \R$ defined by
\begin{equation}
\label{eq-action}
J_p(u,\lambda,\Omega):=\frac12 \|\nabla u\|_{L^2(\Omega)}^2 + \frac\lambda{2} \|u\|_{L^2(\Omega)}^2- \frac1p \|u\|_{L^p(\Omega)}^p.
\end{equation}
The \emph{action ground states} at frequency $\lambda$ are the functions $u\in\NN_p(\lambda,\Omega)$ such that
\begin{equation}
\label{eq-actiongs}
J_p(u,\lambda,\Omega)=\JJ_p(\lambda,\Omega) := \inf_{v \in \NN_p(\lambda,\Omega)} J_p(v,\lambda,\Omega),
\end{equation}
where $\NN_p(\lambda,\Omega)$ is the Nehari manifold associated to $J_p$
\begin{equation}
\label{eq-nehari}
\NN_p(\lambda,\Omega): = \left\{u \in H^1(\Omega)\setminus\{0\}:\|\nabla u\|_{L^2(\Omega)}^2 + \lambda\|u\|_{L^2(\Omega)}^2 = \|u\|_{L^p(\Omega)}^p\right\}.
\end{equation}
It is well-known that
\[
J_p(v,\lambda,\Omega)=\frac{p-2}{2p}\|v\|_{L^p(\Omega)}^p,\qquad\forall v\in\NN_p(\lambda,\Omega),
\]
and that action ground states at frequency $\lambda$ are solutions of \eqref{eq-NLS} with homogeneous Neumann boundary conditions on $\partial\Omega$.

For a general set $\Omega$, the connection between energy and action ground states is provided by the following lemma, whose proof is omitted since it is exactly that of \cite[Theorem 1.3]{DST-23}.

\begin{lemma}
	\label{lem-energyaction}
	If $u$ is an energy ground state with mass $\mu$, then $u$ is an action ground state at frequency $\lambda:=\lambda_u$, with $\lambda_u$ defined by \eqref{eq-lambda}. Moreover, any other action ground state at frequency $\lambda$ is also an energy ground state with the same mass $\mu$.
\end{lemma}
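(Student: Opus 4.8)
First I would record two elementary facts about the interplay between $E_p$ and $J_p$. The first is the pointwise identity
\[
J_p(u,\lambda,\Omega)=E_p(u,\Omega)+\frac\lambda2\|u\|_{L^2(\Omega)}^2,\qquad\forall u\in H^1(\Omega),\ \forall\lambda\in\R,
\]
immediate from \eqref{eq-NLSenergy} and \eqref{eq-action}. The second is the behaviour of the fibering map: if $w\in\NN_p(\lambda,\Omega)$, then, writing $A:=\|\nabla w\|_{L^2(\Omega)}^2+\lambda\|w\|_{L^2(\Omega)}^2=\|w\|_{L^p(\Omega)}^p>0$, one has $J_p(tw,\lambda,\Omega)=A\bigl(\tfrac{t^2}2-\tfrac{t^p}p\bigr)$ for $t>0$, so $t\mapsto J_p(tw,\lambda,\Omega)$ attains its maximum on $(0,\infty)$ at $t=1$, and $t=1$ is the \emph{unique} maximizer, since $t\mapsto\tfrac{t^2}2-\tfrac{t^p}p$ is strictly increasing on $(0,1)$ and strictly decreasing on $(1,\infty)$ for $p>2$.

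Next I would prove the direct implication. Let $u$ be an energy ground state with mass $\mu$. Since $H_\mu^1(\Omega)$ is a smooth constraint, the Lagrange multiplier rule gives that $u$ solves \eqref{eq-NLS} with homogeneous Neumann boundary conditions and $\lambda=\lambda_u$ as in \eqref{eq-lambda}; testing the equation against $u$ then yields $u\in\NN_p(\lambda_u,\Omega)$. Given any $w\in\NN_p(\lambda_u,\Omega)$, set $\sigma:=\sqrt{\mu/\|w\|_{L^2(\Omega)}^2}>0$, so that $\sigma w\in H_\mu^1(\Omega)$. Combining the minimality of $u$, the identity above with $\lambda=\lambda_u$, the equalities $\|u\|_{L^2(\Omega)}^2=\|\sigma w\|_{L^2(\Omega)}^2=\mu$, and the fibering property, one obtains
\[
J_p(u,\lambda_u,\Omega)\le J_p(\sigma w,\lambda_u,\Omega)\le J_p(w,\lambda_u,\Omega).
\]
As $w\in\NN_p(\lambda_u,\Omega)$ is arbitrary and $u\in\NN_p(\lambda_u,\Omega)$, this says $J_p(u,\lambda_u,\Omega)=\JJ_p(\lambda_u,\Omega)$, i.e. $u$ is an action ground state at frequency $\lambda_u$.

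For the converse, set $\lambda:=\lambda_u$ and let $v$ be any action ground state at frequency $\lambda$. Then $v$ solves \eqref{eq-NLS} at frequency $\lambda$, hence $v\in\NN_p(\lambda,\Omega)$, and by the first part $J_p(v,\lambda,\Omega)=\JJ_p(\lambda,\Omega)=J_p(u,\lambda,\Omega)$. Repeating the previous argument with $\sigma:=\sqrt{\mu/\|v\|_{L^2(\Omega)}^2}$ and competitor $\sigma v\in H_\mu^1(\Omega)$ gives
\[
J_p(u,\lambda,\Omega)\le J_p(\sigma v,\lambda,\Omega)\le J_p(v,\lambda,\Omega)=J_p(u,\lambda,\Omega),
\]
so all the inequalities are equalities. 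In particular $J_p(\sigma v,\lambda,\Omega)=J_p(v,\lambda,\Omega)$ with $v\in\NN_p(\lambda,\Omega)$, which by the strict maximality in the fibering property forces $\sigma=1$, i.e. $\|v\|_{L^2(\Omega)}^2=\mu$ and $v\in H_\mu^1(\Omega)$. Then, using the identity once more,
\[
E_p(v,\Omega)=J_p(v,\lambda,\Omega)-\frac\lambda2\mu=J_p(u,\lambda,\Omega)-\frac\lambda2\mu=E_p(u,\Omega)=\ee_p(\mu,\Omega),
\]
so $v$ is an energy ground state with mass $\mu$.

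The only genuinely delicate points I expect are the regularity required to apply the Lagrange multiplier rule to the energy ground state $u$ (so that it really solves \eqref{eq-NLS} and lands on $\NN_p(\lambda_u,\Omega)$), and the step in the converse forcing the dilation parameter to be exactly $\sigma=1$: this is precisely where one must use that $t=1$ is the \emph{unique} maximizer of $t\mapsto J_p(tv,\lambda,\Omega)$, not merely a maximizer, so that the chain of equalities yields $\|v\|_{L^2(\Omega)}^2=\mu$. Everything else is routine bookkeeping with the identity $J_p=E_p+\tfrac\lambda2\|\cdot\|_{L^2}^2$.
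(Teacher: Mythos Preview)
Your proof is correct and follows essentially the same route as the paper's cited source \cite[Theorem 1.3]{DST-23}: the identity $J_p=E_p+\tfrac{\lambda}{2}\|\cdot\|_{L^2}^2$, the observation that an energy ground state lies on $\NN_p(\lambda_u,\Omega)$ by the Lagrange multiplier rule, and the fibering map $t\mapsto J_p(tw,\lambda,\Omega)$ having its unique maximum at $t=1$ for $w\in\NN_p(\lambda,\Omega)$ are precisely the ingredients used there. The two concerns you flag (applicability of the Lagrange multiplier rule and the strict uniqueness of the fibering maximum) are indeed the only nontrivial points, and both are handled correctly in your argument.
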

For most sets, the result of Lemma \ref{lem-energyaction} cannot be improved. However, in a few special cases it is possible to provide a finer characterization of action and energy ground states. In $\R^2$, this is for instance the case of sectors, namely sets in the form
\begin{equation}
\label{eq-Calpha}
\Sigma_\alpha:=\left\{tz\in\R^2\,:\,t\in(0,+\infty)\,,\,z\in\omega_\alpha\right\}
\end{equation}
where, for any given $\alpha\in(0,2\pi)$, $\omega_\alpha$ is the open and connected subset of $S^1$ identified by the angles $\theta\in(0,\alpha)$. Note that $\Sigma_\pi$ is simply the upper half space above the $x$-axis. 

Since in the following ground states on sectors at the $L^2$-critical power $p=4$ will be important, we briefly comment on them here.

Fix then $p=4$ and let us first recall the well-known situation when $\Omega=\R^2$ (see e.g. \cite{BL-83,cazenave}). As for the action, ground states in $\NN_4(\lambda,\R^2)$ exist if and only if $\lambda>0$. For every $\lambda>0$ there exists a unique (up to translations) positive action ground state of $J_4(\cdot,\lambda,\R^2)$, the soliton $\phi_\lambda\in \NN_4(\lambda,\R^2)$. Moreover, $\phi_\lambda$ is a radially, strictly decreasing function (again up to translations) and 
\begin{equation}
\label{eq:phil}
\phi_\lambda(x)=\sqrt{\lambda}\phi_1(\sqrt{\lambda}\,x)\qquad\forall x\in\R^2\,,
\end{equation}
such that
\[
\|\phi_\lambda\|_{L^2(\R^2)}^2=\overline\mu\qquad\forall\lambda>0\,,
\] 
where the critical mass $\overline\mu$ is given by
\begin{equation}
\label{eq:muR2}
\overline\mu:=\frac2{K_2},
\end{equation}
with $K_2$ being the best constant in the Gagliardo-Nirenberg inequality
\[
\|u\|_{L^4(\R^2)}^4\leq K_2\|u\|_{L^2(\R^2)}^2\|\nabla u\|_{L^2(\R^2)}^2\qquad\forall u\in H^1(\R^2)\,,
\]
namely
\[
K_2:=\sup_{u\in H^1(\R^2)\setminus\{0\}}\frac{\|u\|_{L^4(\R^2)}^4}{\|u\|_{L^2(\R^2)}^2\|\nabla u\|_{L^2(\R^2)}^2}\,.
\]
Conversely, the ground state energy level $\ee_4(\mu,\R^2)$ satisfies
\begin{equation}
\label{eq:levER2}
\ee_4(\mu,\R^2)=\begin{cases}
0 & \text{if }\mu\leq\overline\mu\\
-\infty & \text{if }\mu>\overline\mu
\end{cases}
\end{equation}
and it is attained if and only if $\mu=\overline\mu$. Moreover, up to translations and change of sign the energy ground states with mass $\overline\mu$ are exactly the solitons $\phi_\lambda$, for every $\lambda>0$, so that
\[
E_4(\phi_\lambda,\R^2)=0\qquad\forall\lambda>0\,.
\] 
The next propositions fully describe the $L^2$-critical ground states problems on sectors in the plane.
\begin{proposition}
	\label{prop:AGSsector}
	Let $\alpha\in(0,2\pi)$ and $\Sigma_\alpha$ be the set in \eqref{eq-Calpha}. Then, for every $\lambda>0$,
	\begin{equation}
	\label{eq:levJSa}
	\JJ_4(\lambda,\Sigma_\alpha)=\begin{cases}
	\frac\alpha{2\pi}\JJ_4(\lambda,\R^2) & \text{if }\alpha\in(0,\pi]\\
	\frac{1}{2}\JJ_4(\lambda,\R^2) & \text{if }\alpha\in(\pi,2\pi)\,.
	\end{cases}
	\end{equation}
	Moreover, 
	\begin{itemize}
		\item[(i)] if $\alpha\in(0,\pi]$, then the unique positive action ground state of $J_4(\cdot,\lambda,\Sigma_\alpha)$ in $\NN_4(\lambda,\Sigma_\alpha)$ is the restriction of $\phi_\lambda$ to $\Sigma_\alpha$ (up to translations in the case $\alpha=\pi$);
		
		\item[(ii)] if $\alpha\in(\pi,2\pi)$, then $\JJ_4(\lambda,\Sigma_\alpha)$ is not attained.
	\end{itemize}
\end{proposition}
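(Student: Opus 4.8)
The plan is to obtain matching upper and lower bounds for $\JJ_4(\lambda,\Sigma_\alpha)$ and then to read off the attainment statements from the corresponding equality cases. Throughout I will use that on the Nehari manifold $J_4(v,\lambda,\Omega)=\tfrac14\|v\|_{L^4(\Omega)}^4$, that $\JJ_4(\lambda,\R^2)=\tfrac14\|\phi_\lambda\|_{L^4(\R^2)}^4$, and the elementary remark that any $u\in H^1(\Omega)\setminus\{0\}$ with $\|\nabla u\|_{L^2(\Omega)}^2+\lambda\|u\|_{L^2(\Omega)}^2\le\|u\|_{L^4(\Omega)}^4$ can be rescaled to $su\in\NN_4(\lambda,\Omega)$ with $s\in(0,1]$, so that $\JJ_4(\lambda,\Omega)\le J_4(su,\lambda,\Omega)\le\tfrac14\|u\|_{L^4(\Omega)}^4$. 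For the upper bounds: when $\alpha\in(0,\pi]$, since $\phi_\lambda$ is radial about the origin, a computation in polar coordinates gives $\|\phi_\lambda\|_{L^q(\Sigma_\alpha)}^q=\tfrac{\alpha}{2\pi}\|\phi_\lambda\|_{L^q(\R^2)}^q$ for every $q$ and $\|\nabla\phi_\lambda\|_{L^2(\Sigma_\alpha)}^2=\tfrac{\alpha}{2\pi}\|\nabla\phi_\lambda\|_{L^2(\R^2)}^2$, whence the Nehari identity transfers and $\phi_\lambda|_{\Sigma_\alpha}\in\NN_4(\lambda,\Sigma_\alpha)$ with $J_4(\phi_\lambda|_{\Sigma_\alpha},\lambda,\Sigma_\alpha)=\tfrac{\alpha}{2\pi}\JJ_4(\lambda,\R^2)$; when $\alpha\in(\pi,2\pi)$, I take $v_n:=\phi_\lambda(\cdot-p_n)|_{\Sigma_\alpha}$ with $p_n$ on one edge of $\partial\Sigma_\alpha$ and $|p_n|\to\infty$, exploit the exponential decay of $\phi_\lambda$ and the fact that the half-plane bounded by the line through that edge is contained in $\Sigma_\alpha$ to see that the relevant norms of $v_n$ tend to one half of those of $\phi_\lambda$ on $\R^2$, and rescale onto $\NN_4(\lambda,\Sigma_\alpha)$. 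This gives ``$\le$'' in \eqref{eq:levJSa}.

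For the lower bounds I distinguish three cases. If $\alpha=\pi$, even reflection across $\partial\Sigma_\pi$ is a bijection of $\NN_4(\lambda,\Sigma_\pi)$ onto the functions in $\NN_4(\lambda,\R^2)$ even with respect to $\partial\Sigma_\pi$, doubling the action; since $\JJ_4(\lambda,\R^2)$ is attained at solitons centred at any point of $\partial\Sigma_\pi$, which are even, one obtains $\JJ_4(\lambda,\Sigma_\pi)=\tfrac12\JJ_4(\lambda,\R^2)$. If $\alpha\in(0,\pi)$, given $u\in\NN_4(\lambda,\Sigma_\alpha)$ I replace $u$ by $|u|$, let $u^{\mathrm{sec}}$ be its decreasing rearrangement inside the convex cone $\Sigma_\alpha$ (the superlevel sets being circular sectors with vertex at the origin), use the isoperimetric inequality in convex cones of Lions--Pacella and the ensuing P\'olya--Szeg\H{o} inequality to get $\|\nabla u^{\mathrm{sec}}\|_{L^2(\Sigma_\alpha)}\le\|\nabla u\|_{L^2(\Sigma_\alpha)}$ together with equality of all $L^q$ norms, lift $u^{\mathrm{sec}}$ to the radial function $v$ on $\R^2$ with the same profile (which multiplies each relevant quantity by $\tfrac{2\pi}{\alpha}$), note that then $\|\nabla v\|_{L^2(\R^2)}^2+\lambda\|v\|_{L^2(\R^2)}^2\le\tfrac{2\pi}{\alpha}\|u\|_{L^4(\Sigma_\alpha)}^4=\|v\|_{L^4(\R^2)}^4$, and rescale $v$ onto $\NN_4(\lambda,\R^2)$ to conclude $J_4(u,\lambda,\Sigma_\alpha)\ge\tfrac{\alpha}{2\pi}\JJ_4(\lambda,\R^2)$. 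If $\alpha\in(\pi,2\pi)$ I argue identically, this time rearranging $|u|$ onto the half-plane $\Sigma_\pi\subset\Sigma_\alpha$ into a function whose superlevel sets are half-disks centred on the boundary; the geometric input now is that the sharp relative isoperimetric constant of the non-convex cone $\Sigma_\alpha$ coincides with that of a half-plane, so the corresponding P\'olya--Szeg\H{o} inequality loses nothing and $J_4(u,\lambda,\Sigma_\alpha)\ge\JJ_4(\lambda,\Sigma_\pi)=\tfrac12\JJ_4(\lambda,\R^2)$. Combined with the upper bounds this proves \eqref{eq:levJSa}, and in particular shows $\phi_\lambda|_{\Sigma_\alpha}$ to be an action ground state for $\alpha\in(0,\pi]$.

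For the attainment statements I examine the equality cases. If $\alpha\in(0,\pi)$ and $w$ is a positive action ground state, every inequality above holds with equality for $u=w$; since $w$ solves \eqref{eq-NLS} it is real-analytic and nonconstant in the interior, so $\nabla w$ vanishes only on a null set and the rigidity of the P\'olya--Szeg\H{o} inequality in cones forces $w=w^{\mathrm{sec}}$, i.e. $w$ is radial and strictly decreasing about the vertex; unwinding the identities then identifies the lifted function with a radial minimizer of the Gagliardo--Nirenberg quotient on $\R^2$, hence with a soliton, giving $w=\phi_\mu|_{\Sigma_\alpha}$ for some $\mu>0$, and membership in $\NN_4(\lambda,\Sigma_\alpha)$ forces $\mu=\lambda$. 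The case $\alpha=\pi$ is similar through the reflection bijection: a positive ground state reflects to an even positive ground state on $\R^2$, necessarily a soliton centred on $\partial\Sigma_\pi$, which gives (i) up to translations. Finally, if $\alpha\in(\pi,2\pi)$ and a positive ground state $w$ existed, equality in the rearrangement onto $\Sigma_\pi$ would force $\{w>t\}$ to be isoperimetric in $\Sigma_\alpha$ for a.e. $t>0$, hence a half-disk based on one of the two edges; as nested half-disks must be based on the same edge, $\{w>0\}$ would lie in the half-plane through that edge and $w$ would vanish on the nonempty open sub-sector of $\Sigma_\alpha$ outside it, contradicting --- via unique continuation --- that $w$ solves \eqref{eq-NLS} and is not identically zero. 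Thus $\JJ_4(\lambda,\Sigma_\alpha)$ is not attained, which is (ii).

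I expect the main obstacle to be the two lower bounds: one has to invoke the sharp isoperimetric inequality and the associated P\'olya--Szeg\H{o} inequality in a cone --- in the convex regime through Lions--Pacella, in the non-convex regime through the less standard fact that for $\alpha\in(\pi,2\pi)$ the sharp relative isoperimetric constant of $\Sigma_\alpha$ equals that of a half-plane --- and one must sharpen the corresponding rigidity statements, using the regularity of solutions of \eqref{eq-NLS}, enough to pin down $w$ itself (and not merely $|\nabla w|$) in the uniqueness and non-attainment arguments.
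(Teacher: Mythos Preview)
Your argument is correct and follows essentially the same route as the paper: both proofs rest on the P\'olya--Szeg\H{o} inequality for the sectorial rearrangement in $\Sigma_\alpha$ (via the relative isoperimetric inequality in cones), combined with the norm-scaling bijection between radial functions on $\Sigma_\alpha$ and radial functions on $\R^2$. The differences are only in packaging. The paper keeps $\alpha=\pi$ inside the convex case rather than treating it separately by even reflection; for $\alpha\in(\pi,2\pi)$ it obtains the upper bound through the density of $C_0^\infty(\R^2)|_{\Sigma_\pi}$ in $H^1(\Sigma_\pi)$ and translation into $\Sigma_\alpha$, instead of pushing solitons along an edge; and for uniqueness in the convex case it first identifies $u^*$ with $\phi_\lambda|_{\Sigma_\alpha}$ and then applies the Brothers--Ziemer equality criterion (the hypothesis $|\{u^*>0,\nabla u^*=0\}|=0$ being read off directly from $\phi_\lambda$), whereas you check the analogous condition on $w$ via analyticity---both orderings work. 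The one place where the paper's version is genuinely shorter is non-attainment for $\alpha\in(\pi,2\pi)$: instead of analyzing the equality case of the rearrangement and invoking unique continuation, the paper simply records that \eqref{eq:PS2} is \emph{strict} whenever $u>0$ on $\Sigma_\alpha$, so a positive ground state would immediately yield $\JJ_4(\lambda,\Sigma_\pi)<\JJ_4(\lambda,\Sigma_\alpha)$, contradicting \eqref{eq:levJSa}.
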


\begin{proposition}
	\label{prop:EGSsector}
	Let $\alpha\in(0,2\pi)$, $\Sigma_\alpha$ be the set in \eqref{eq-Calpha}, and
	\begin{equation}
	\label{eq:mua}
	\overline\mu_\alpha:=\begin{cases}
	\frac\alpha{2\pi}\overline\mu & \text{if } \alpha\in(0,\pi]\\
	\frac{1}2\overline\mu & \text{if } \alpha\in(\pi,2\pi)\,.
	\end{cases}
	\end{equation}
	Then
	\begin{equation}
	\label{eq:levESa}
	\ee_4(\mu,\Sigma_\alpha)=\begin{cases}
	0 & \text{if }\mu\leq\overline{\mu}_\alpha\\
	-\infty & \text{if }\mu>\overline{\mu}_\alpha\,.
	\end{cases}
	\end{equation}
	Moreover,
	\begin{itemize}
		\item[(i)] if $\alpha\in(0,\pi]$, then $\ee_4(\mu,\Sigma_\alpha)$ is attained if and only if $\mu=\overline\mu_\alpha$, and the positive energy ground states of $E_4(\cdot,\Sigma_\alpha)$ with mass $\overline{\mu}_\alpha$ are the restriction of $\phi_\lambda$ to $\Sigma_\alpha$, for every $\lambda>0$ (up to translations in the case $\alpha=\pi$);

		\item[(ii)]  if $\alpha\in(\pi,2\pi)$, then $\ee_4(\mu,\Sigma_\alpha)$ is not attained for any $\mu>0$.
	\end{itemize}
\end{proposition}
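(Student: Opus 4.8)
The plan is to combine three ingredients already at our disposal: the description of the $L^2$-critical problem on $\R^2$ recalled above, the computation of $\JJ_4(\lambda,\Sigma_\alpha)$ in Proposition \ref{prop:AGSsector}, and the energy--action correspondence of Lemma \ref{lem-energyaction}. The first observation is that $\Sigma_\alpha$ is a cone, hence invariant under dilations: for $u\in H^1(\Sigma_\alpha)$ the rescaled function $u_\sigma(x):=\sigma\,u(\sigma x)$ still belongs to $H^1(\Sigma_\alpha)$, has the same mass as $u$, and satisfies $E_4(u_\sigma,\Sigma_\alpha)=\sigma^2\,E_4(u,\Sigma_\alpha)$. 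Letting $\sigma\to0^+$ and $\sigma\to+\infty$ shows that $\ee_4(\mu,\Sigma_\alpha)\in\{0,-\infty\}$ for every $\mu>0$, equal to $0$ exactly when $E_4(\cdot,\Sigma_\alpha)\geq0$ on $H^1_\mu(\Sigma_\alpha)$ and to $-\infty$ otherwise.

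Because of this dichotomy, \eqref{eq:levESa} reduces to computing the best constant $\kappa_\alpha$ in the Gagliardo--Nirenberg inequality $\|u\|_{L^4(\Sigma_\alpha)}^4\leq\kappa_\alpha\|u\|_{L^2(\Sigma_\alpha)}^2\|\nabla u\|_{L^2(\Sigma_\alpha)}^2$ on $\Sigma_\alpha$, and to checking that $\ee_4(\mu,\Sigma_\alpha)=0$ precisely for $\mu\leq 2/\kappa_\alpha$. I would extract $\kappa_\alpha$ from Proposition \ref{prop:AGSsector} through the classical identity $\JJ_4(\lambda,\Omega)=\lambda/\kappa_\Omega$, valid on every domain invariant under dilations (in particular on $\R^2$ and on $\Sigma_\alpha$): the inequality ``$\leq$'' follows by rescaling an almost-optimal sequence for $\kappa_\Omega$ onto $\NN_4(\lambda,\Omega)$ and choosing the single remaining dilation parameter so as to minimise $J_4$, while ``$\geq$'' follows from the identity $J_4(v,\lambda,\Omega)=\tfrac14\|v\|_{L^4(\Omega)}^4$ on $\NN_4(\lambda,\Omega)$ together with the elementary bound $s^2\geq4(s-1)$ evaluated at $s=\kappa_\Omega\|v\|_{L^2(\Omega)}^2>1$. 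Since $\kappa_{\R^2}=K_2$, Proposition \ref{prop:AGSsector} then gives $\kappa_\alpha=\tfrac{2\pi}{\alpha}K_2=2/\overline\mu_\alpha$ for $\alpha\in(0,\pi]$ and $\kappa_\alpha=2K_2=2/\overline\mu_\alpha$ for $\alpha\in(\pi,2\pi)$. Plugging this into $E_4(u,\Sigma_\alpha)\geq\tfrac12\|\nabla u\|_{L^2(\Sigma_\alpha)}^2\big(1-\tfrac{\kappa_\alpha}{2}\mu\big)$ for $u\in H^1_\mu(\Sigma_\alpha)$, and, when $\mu>\overline\mu_\alpha$, testing the energy along a sequence nearly optimal for $\kappa_\alpha$ and then letting $\sigma\to+\infty$, yields \eqref{eq:levESa}.

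For attainment, let $u$ be an energy ground state of $E_4(\cdot,\Sigma_\alpha)$ with mass $\mu$. By \eqref{eq:levESa} we have $E_4(u,\Sigma_\alpha)=0$, hence $\|\nabla u\|_{L^2(\Sigma_\alpha)}^2=\tfrac12\|u\|_{L^4(\Sigma_\alpha)}^4$ and, by \eqref{eq-lambda}, $\lambda_u=\|u\|_{L^4(\Sigma_\alpha)}^4/(2\mu)>0$; by Lemma \ref{lem-energyaction}, $u$ is then an action ground state of $J_4(\cdot,\lambda_u,\Sigma_\alpha)$ at the positive frequency $\lambda_u$. If $\alpha\in(\pi,2\pi)$ this contradicts Proposition \ref{prop:AGSsector}(ii), so $\ee_4(\mu,\Sigma_\alpha)$ is attained for no $\mu$; if $\alpha\in(0,\pi]$ and in addition $u>0$, then Proposition \ref{prop:AGSsector}(i) forces $u=\phi_{\lambda_u}|_{\Sigma_\alpha}$ (up to a translation along $\partial\Sigma_\pi$ when $\alpha=\pi$), so $\mu=\|\phi_{\lambda_u}|_{\Sigma_\alpha}\|_{L^2(\Sigma_\alpha)}^2=\tfrac{\alpha}{2\pi}\overline\mu=\overline\mu_\alpha$: this excludes attainment when $\mu\neq\overline\mu_\alpha$ and pins down the positive minimizers when $\mu=\overline\mu_\alpha$. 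Conversely, the radial symmetry of $\phi_\lambda$ gives $\|\phi_\lambda|_{\Sigma_\alpha}\|_{L^2(\Sigma_\alpha)}^2=\tfrac{\alpha}{2\pi}\overline\mu=\overline\mu_\alpha$ and $E_4(\phi_\lambda|_{\Sigma_\alpha},\Sigma_\alpha)=\tfrac{\alpha}{2\pi}E_4(\phi_\lambda,\R^2)=0$, so every $\phi_\lambda|_{\Sigma_\alpha}$ is an energy ground state with mass $\overline\mu_\alpha$; this establishes (i) and (ii).

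The main obstacle is the identification of $\kappa_\alpha$, that is, turning the action statement of Proposition \ref{prop:AGSsector} into a \emph{sharp} Gagliardo--Nirenberg inequality on the sector. The lower bound $\kappa_\alpha\geq\tfrac{2\pi}{\alpha}K_2$ (resp.\ $\kappa_\alpha\geq2K_2$) is easy, by restricting $\phi_\lambda$ when $\alpha\leq\pi$ (resp.\ by extending half-plane competitors by zero when $\alpha>\pi$); the matching upper bound is exactly the nontrivial content encoded in the value of $\JJ_4(\lambda,\Sigma_\alpha)$, which is why Proposition \ref{prop:AGSsector} is used here as a black box. Once $\kappa_\alpha$ is known, the rest is the classical $L^2$-critical dichotomy, transported to $\Sigma_\alpha$ via the dilation invariance of the cone and Lemma \ref{lem-energyaction}.
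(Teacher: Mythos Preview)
Your argument is correct and takes a genuinely different route from the paper's. The paper proves Proposition~\ref{prop:EGSsector} by running the symmetric rearrangement machinery of \eqref{eq:u^*1}--\eqref{eq:PS2} directly on the energy functional: for $\alpha\in(0,\pi]$ it establishes the chain $\ee_4(\mu,\Sigma_\alpha)=\ee_4^{rad}(\mu,\Sigma_\alpha)=\tfrac{\alpha}{2\pi}\ee_4(\tfrac{2\pi}{\alpha}\mu,\R^2)$ and then reads off both the level and the minimizers from the known $\R^2$ picture; for $\alpha\in(\pi,2\pi)$ it uses the strict P\'olya--Szeg\H{o} inequality \eqref{eq:PS2} to rule out minimizers. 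In particular, the paper's proof does not invoke Proposition~\ref{prop:AGSsector} or Lemma~\ref{lem-energyaction} at all. Your approach instead recycles those two results: you extract the sharp Gagliardo--Nirenberg constant $\kappa_\alpha$ from the value of $\JJ_4(\lambda,\Sigma_\alpha)$ via the dilation-invariant identity $\JJ_4(\lambda,\Omega)=\lambda/\kappa_\Omega$ (your justification of both inequalities in this identity is fine), use it together with the scaling $E_4(u_\sigma)=\sigma^2E_4(u)$ to get the $\{0,-\infty\}$ dichotomy and the threshold, and then use Lemma~\ref{lem-energyaction} and the classification in Proposition~\ref{prop:AGSsector} for (non-)attainment. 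This is a clean way to avoid repeating the rearrangement argument a second time, at the cost of making the proof logically dependent on Proposition~\ref{prop:AGSsector}; the paper's version is more self-contained but essentially redoes on the energy side the work already done on the action side.

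One small point to tighten: your sentence ``this excludes attainment when $\mu\neq\overline\mu_\alpha$'' is drawn only from the case ``$u>0$'', so as written it rules out only positive ground states. You should add the standard observation that if $u$ is any energy ground state then so is $|u|$ (same mass, $E_4(|u|,\Sigma_\alpha)\le E_4(u,\Sigma_\alpha)$), hence $|u|$ is an action ground state at $\lambda_{|u|}>0$; the classification in Proposition~\ref{prop:AGSsector}(i) then forces $|u|=\phi_{\lambda_{|u|}}|_{\Sigma_\alpha}$ and thus $\mu=\overline\mu_\alpha$. Alternatively, for $\mu<\overline\mu_\alpha$ the strict inequality $E_4(u,\Sigma_\alpha)\ge\tfrac12\|\nabla u\|_{L^2(\Sigma_\alpha)}^2(1-\tfrac{\kappa_\alpha}{2}\mu)>0$ whenever $u\not\equiv0$ already excludes attainment without appealing to the action side.
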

The content of Propositions \ref{prop:AGSsector}-\ref{prop:EGSsector} is perhaps well-known, and a part of it can be found e.g. in \cite{K18}. However, since we did not find a single reference to quote for all the above results, we provide a quick proof for the sake of completeness. 

We will exploit the following symmetrization procedures. For a measurable function $u:\Sigma_\alpha\to\R$, let $\mu:\R^+\to\R^+$ be its distribution function
\[
\mu(t):=|\left\{x\in\Sigma_\alpha\,:\,|u(x)|>t\right\}|
\]
and define $u^\sharp:\R^+\to\R^+$ as
\[
u^\sharp(s):=\inf\left\{t\geq0\,:\,\mu(t)<s\right\}\,.
\]
For $\alpha\in(0,\pi]$ we define the symmetric rearrangement of $u$ on $\Sigma_\alpha$ as the function $u^*:\Sigma_\alpha\to\R^+$ given by
\begin{equation}
\label{eq:u^*1}
u^*(x):=u^\sharp(\alpha|x|^2/2)\qquad\forall x\in\Sigma_\alpha\,.
\end{equation}
Observe that, by construction, $u^*$ is radially symmetric and nonincreasing on $\Sigma_\alpha$. Furthermore, if $u\in H^1(\Sigma_\alpha)$, then
\begin{equation}
\label{eq:equi1}
\|u\|_{L^r(\Sigma_\alpha)}=\|u^*\|_{L^r(\Sigma_\alpha)}\qquad\forall r\geq2
\end{equation}
and the following P\'olya-Sz\"ego inequality holds (see e.g. \cite[Theorem 3.7]{PT} or \cite[Section 2]{K18})
\begin{equation}
\label{eq:PS1}
\|\nabla u^*\|_{L^2(\Sigma_\alpha)}\leq\|\nabla u\|_{L^2(\Sigma_\alpha)}\,,
\end{equation}
and if equality occurs for some $u$ such that the set $|\left\{x\in\Sigma_\alpha\,:\,u^*(x)>0\,,\,\nabla u^*(x)=0\right\}|=0$, then (up to translations in the case $\alpha=\pi$) the function $u$ coincides with $u^*$ (a detailed proof of this last property can be found in \cite{BZ} for symmetric rearrangements in the whole space, but the argument is analogous on $\Sigma_\alpha$ for $\alpha\in(0,\pi]$).

On the other hand, for $\alpha\in(\pi,2\pi)$ we define the symmetric rearrangement of $u$ on $\Sigma_\pi$ as the function $u^*:\Sigma_\pi\to\R^+$ given by
\begin{equation}
\label{eq:u^*2}
u^*(x):=u^\sharp(\pi|x|^2/2)\qquad\forall x\in\Sigma_\pi\,.
\end{equation}
It is readily seen again that $u^*$ is radially symmetric and nonincreasing on $\Sigma_\pi$ and, if $u\in H^1(\Sigma_\alpha)$,
\begin{equation}
\label{eq:equi2}
\|u\|_{L^r(\Sigma_\alpha)}=\|u^*\|_{L^r(\Sigma_\pi)}\qquad\forall r\geq2\,.
\end{equation}
Furthermore, combining the standard argument for the proof of the P\'olya-Sz\"ego inequality with the characterization of the optimal isoperimetric sets in $\Sigma_\alpha$ given in \cite[Chapter 2]{bandle}, it can be proved that
\begin{equation}
\label{eq:PS2}
\|\nabla u^*\|_{L^2(\Sigma_\pi)}\leq\|\nabla u\|_{L^2(\Sigma_\alpha)}
\end{equation}
and that the inequality is strict if $u>0$ on $\Sigma_\alpha$.

We are now in position to prove the above results for ground states on sectors.
\begin{proof}[Proof of Proposition \ref{prop:AGSsector}]
	Consider first the case $\alpha\in(0,\pi]$. Observe that, for every $u\in \NN_4(\lambda,\Sigma_\alpha)$, if $u^*$ denotes its symmetric rearrangement on $\Sigma_\alpha$ as in \eqref{eq:u^*1}, then by \eqref{eq:equi1} and \eqref{eq:PS1}
	\[
	\sigma_\lambda(u^*):=\left(\frac{\|\nabla u^*\|_{L^2(\Sigma_\alpha)}^2+\lambda\|u^*\|_{L^2(\Sigma_\alpha)}^2}{\|u^*\|_{L^4(\Sigma_\alpha)}^4}\right)^{1/2}\leq \left(\frac{\|\nabla u\|_{L^2(\Sigma_\alpha)}^2+\lambda\|u\|_{L^2(\Sigma_\alpha)}^2}{\|u\|_{L^4(\Sigma_\alpha)}^4}\right)^{1/2}=1\,.
	\]
	Hence, since $\sigma_\lambda(u^*)u^*\in\NN_4(\lambda,\Sigma_\alpha)$ by construction, we obtain that
	\[
	\begin{split}
	\JJ_4^{rad}(\lambda,\Sigma_\alpha):=&\,\inf_{\substack{v\in\NN_4(\lambda,\Sigma_\alpha)\setminus\{0\} \\ v \text{ radial} }}J_4(v,\lambda,\Sigma_\alpha)\leq J_4(\sigma_\lambda(u^*)u^*,\lambda,\Sigma_\alpha)\\
	=&\,\frac14\sigma_\lambda(u^*)^4\|u^*\|_{L^4(\Sigma_\alpha)}^4\leq\frac14\|u\|_{L^4(\Sigma_\alpha)}^4=J_4(u,\lambda,\Sigma_\alpha)\,.
	\end{split}
	\]
	Taking the infimum over $u\in\NN_4(\lambda,\Sigma_\alpha)$ then yields $\JJ_4^{rad}(\lambda,\Sigma_\alpha)\leq\JJ_4(\lambda,\Sigma_\alpha)$, and since the reverse inequality is trivially true we obtain
	\begin{equation}
	\label{eq:Jr=J}
	\JJ_4^{rad}(\lambda,\Sigma_\alpha)=\JJ_4(\lambda,\Sigma_\alpha)\,.
	\end{equation}
	We now check that 
	\begin{equation}
	\label{eq:Jrad}
	\displaystyle\JJ_4^{rad}(\lambda,\Sigma_\alpha)=\frac\alpha{2\pi}\JJ_4(\lambda,\R^2)
	\end{equation}
	and that the unique (up to translations in the case $\alpha=\pi$) positive ground state of $\JJ_4^{rad}(\lambda,\Sigma_\alpha)$ is the restriction of the soliton $\phi_\lambda$ to $\Sigma_\alpha$. Indeed, the restriction of $\phi_\lambda$ to $\Sigma_\alpha$ is radial, belongs to $\NN_4(\lambda,\Sigma_\alpha)$, and
	\[
	\|\nabla\phi_\lambda\|_{L^2(\Sigma_\alpha)}^2=\frac{\alpha}{2\pi}\|\nabla\phi_\lambda\|_{L^2(\R^2)}^2\,,\,\quad\|\phi_\lambda\|_{L^r(\Sigma_\alpha)}^r=\frac\alpha{2\pi}\|\phi_\lambda\|_{L^r(\R^2)}^r\quad\forall r\geq2\,,
	\]
	directly implying that $\displaystyle\JJ_4^{rad}(\lambda,\Sigma_\alpha)\leq J_4(\phi_\lambda,\lambda,\Sigma_\alpha)=\frac\alpha{2\pi}\JJ_4(\lambda,\R^2)$. Conversely, for any radial function $u\in\NN_4(\lambda,\Sigma_\alpha)$,
	\[
	\|\nabla u\|_{L^2(\Sigma_\alpha)}^2=\alpha\int_{\R^+}|f'(\rho)|^2\rho\,d\rho\,,\quad\,\|u\|_{L^r(\Sigma_\alpha)}^r=\alpha\int_{\R^+}|f(\rho)|^r\rho\,d\rho\quad\forall r\geq2\,,
	\]
	with $f:\R^+\to\R$ being the radial profile of $u$. Then, letting $v$ be the radial function on $\R^2$ with the same radial profile $f$ of $u$ one has
	\[
	\|\nabla v\|_{L^2(\R^2)}^2=\frac{2\pi}\alpha\|\nabla u\|_{L^2(\Sigma_\alpha)}^2\,,\,\quad\|v\|_{L^r(\R^2)}^r=\frac{2\pi}{\alpha}\|u\|_{L^r(\Sigma_\alpha)}^r\quad\forall r\geq2\,,
	\]
	so that $v\in\NN_4(\lambda,\R^2)$ and $\displaystyle\JJ_4(\lambda,\R^2)\leq J_4(v,\lambda,\R^2)=\frac{2\pi}{\alpha}J_4(u,\lambda,\Sigma_\alpha)$. Taking the infimum over all radial functions $u\in\NN_4(\lambda,\Sigma_4)$ yields $\displaystyle\JJ_4^{rad}(\lambda,\Sigma_\alpha)\geq\frac\alpha{2\pi}\JJ_4(\lambda,\R^2)$, and thus \eqref{eq:Jrad}. The above computations also show that the only positive radial function in $\NN_4(\lambda,\Sigma_4)$ attaining $\JJ_4(\lambda,\Sigma_\alpha)$ is the restriction of $\phi_\lambda$ to $\Sigma_\alpha$. 
	
	Now, by \eqref{eq:Jr=J} and \eqref{eq:Jrad} we have the first line of \eqref{eq:levJSa}. Moreover, since we already know that the restriction of $\phi_\lambda$ to $\Sigma_\alpha$ is the unique positive radial action ground state in $\NN_4(\lambda,\Sigma_\alpha)$, to complete the proof of item (i) it is enough to exclude the existence of any other positive action ground state (up to translations in the case $\alpha=\pi$). However, if $u\in\NN_4(\lambda,\Omega)$ is such that $\JJ_4(\lambda,\Sigma_\alpha)=J_4(u,\lambda,\Sigma_\alpha)$, then by \eqref{eq:equi1} and \eqref{eq:PS1} its symmetric rearrangement $u^*$ on $\Sigma_\alpha$ is an action ground state too, and equality occurs in \eqref{eq:PS1}. Since $u^*$ is radial by construction, it is the restriction of $\phi_\lambda$ to $\Sigma_\alpha$. In particular, $|\left\{x\in\Sigma_\alpha\,:\,u^*(x)>0\,,\,\nabla u^*(x)=0\right\}|=0$, implying that $u$ coincides with $u^*$ (up to translations in the case $\alpha=\pi$). This concludes the proof of item (i).
	
	Take now $\alpha\in(\pi,2\pi)$. Relying on \eqref{eq:equi2} and \eqref{eq:PS2} and arguing as in the first part of the proof gives $\displaystyle\JJ_4(\lambda,\Sigma_\alpha)=\JJ_4^{rad}(\lambda,\Sigma_\pi)=\JJ_4(\lambda,\Sigma_\pi)$ (observe also that the restrictions to $\Sigma_\pi$ of the functions in $C_0^\infty(\R^2)$ are dense in $H^1(\Sigma_\pi)$ and a suitable translation of any such function can be thought of as defined on $\Sigma_\alpha$, immediately yielding $\JJ_{4}(\lambda,\Sigma_\alpha)\leq \JJ_4(\lambda,\Sigma_\pi)$). Together with the results already proved for $\Sigma_\pi$, this completes the proof of \eqref{eq:levJSa}. Finally, assume by contradiction that there exists an action ground state $u\in\NN_4(\lambda,\Sigma_\alpha)$, and with no loss of generality let $u\geq0$. Since $u$ solves \eqref{eq-NLS} on $\Sigma_\alpha$ with homogeneous Neumann boundary conditions, we obtain $u>0$ on $\Sigma_\alpha$. This ensures the validity of the strict inequality in \eqref{eq:PS2}, so that the symmetric rearrangement $u^*$ of $u$ on $\Sigma_\pi$ defined in \eqref{eq:u^*2} satisfies
	\[
	\sigma_\lambda(u^*):=\left(\frac{\|\nabla u^*\|_{L^2(\Sigma_\pi)}^2+\lambda\|u^*\|_{L^2(\Sigma_\pi)}^2}{\|u^*\|_{L^4(\Sigma_\pi)}^4}\right)^{1/2}<1\,,
	\]
	in turn yielding 
	\[
	\JJ_4(\lambda,\Sigma_\pi)\leq J_4(\sigma_\lambda(u^*)u^*,\lambda,\Sigma_\pi)<J_4(u,\lambda,\Sigma_\alpha)=\JJ_4(\lambda,\Sigma_\alpha)\,.
	\]
	Since this is impossible by \eqref{eq:levJSa}, we conclude.
\end{proof}
\begin{proof}[Proof of Proposition \ref{prop:EGSsector}]
	Let us start with $\alpha\in(0,\pi]$. By \eqref{eq:levER2}, for every $\mu>\overline\mu$ there exists $u\in H_\mu^1(\R^2)$ such that $E_4(u,\R^2)<0$. With no loss of generality we can take $u$ to be radial on $\R^2$. Setting $u_s(x):=su(sx)$ for every $x\in\R^2$, $s>0$, and restricting $u_s$ to $\Sigma_\alpha$ it follows that $u_s\in H_{\frac{\alpha\mu}{2\pi}}^1(\Sigma_\alpha)$ for every $s>0$ and
	\[
	\ee_4\left(\frac{\alpha\mu}{2\pi},\Sigma_\alpha\right)\leq\lim_{s\to\infty}E_4(u_s,\Sigma_\alpha)=\lim_{s\to\infty}s^2E_4(u,\Sigma_\alpha)=-\infty\,.
	\]
	By the arbitrariness of $\mu>\overline\mu$, this proves \eqref{eq:levESa} for masses larger than $\overline\mu_\alpha$. 
	
	Conversely, for every $\mu\leq\overline\mu_\alpha$, setting
	\[
	\ee_4^{rad}(\mu,\Sigma_\alpha):=\inf_{\substack{v\in H_\mu^1(\Sigma_\alpha) \\ v \text{ radial} }}E_4(v,\Sigma_\alpha)
	\]
	and exploiting \eqref{eq:equi1} and \eqref{eq:PS1} as in the first part of the proof of Proposition \ref{prop:AGSsector} it is easy to see again that
	\begin{equation}
	\label{eq:ugE}
	\ee_4(\mu,\Sigma_\alpha)=\ee_4^{rad}(\mu,\Sigma_\alpha)=\frac{\alpha}{2\pi}\ee_4\left(\frac{2\pi}{\alpha}\mu,\R^2\right)\,,
	\end{equation}
	that together with \eqref{eq:levER2} and the definition of $\overline\mu_\alpha$ completes the proof of \eqref{eq:levESa} for $\alpha\in(0,\pi]$. Moreover, if $u\in H_{\mu}^1(\Sigma_\alpha)$ is such that $E_4(u,\Sigma_\alpha)=\ee_4(\mu,\Sigma_\alpha)$, then by \eqref{eq:ugE} the same is true for its symmetric rearrangement $u^*$ as in \eqref{eq:u^*1}, and the radial function in the whole $\R^2$ with the same radial profile of $u^*$ is a ground state of $E_4(\cdot,\R^2)$ with mass $2\pi\mu/\alpha$. This implies that $2\pi\mu/\alpha=\overline\mu$, i.e. $\mu=\overline\mu_\alpha$, and that (up to translations when $\alpha=\pi$ and up to a change of sign) $u$ is the restriction of a soliton $\phi_\lambda$ to $\Sigma_\alpha$, for some $\lambda>0$. This proves item (i).
	
	Consider then $\alpha\in(\pi,2\pi)$. Observe that the inequality $\displaystyle \ee_4(\mu,\Sigma_\alpha)\leq\ee_4(\mu,\Sigma_\pi)$ holds for every $\mu>0$, again by the density argument invoked in the proof of Proposition \ref{prop:AGSsector}. Conversely, exploiting \eqref{eq:equi2} and \eqref{eq:PS2} as in the second part of the proof of Proposition \ref{prop:AGSsector} we obtain the reverse inequality $\ee_4(\mu,\Sigma_\alpha)\geq\ee_4(\mu,\Sigma_\pi)$, thus yielding $\ee_4(\mu,\Sigma_\alpha)=\ee_4(\mu,\Sigma_\pi)$ for every $\mu>0$. This completes the proof of \eqref{eq:levESa}. Furthermore, if $u\in H_\mu^1(\Sigma_\alpha)$ is such that $E_4(u,\Sigma_\alpha)=\ee_4(\mu,\Sigma_\alpha)$, then we have again that with no loss of generality $u>0$ on $\Sigma_\alpha$. Hence, the symmetric rearrangement $u^*$ of $u$ on $\Sigma_\pi$ as in \eqref{eq:u^*2} satisfies $u^*\in H_\mu^1(\Sigma_\pi)$ and $E_4(u^*,\Sigma_\pi)<E_4(u,\Sigma_\alpha)=\ee_4(\mu,\Sigma_\alpha)$, the strict inequality being guaranteed by the strict inequality in \eqref{eq:PS2}. Since this is impossible by \eqref{eq:levESa}, we conclude that $\ee_4(\mu,\Sigma_\alpha)$ is not attained for any $\mu>0$. 
\end{proof}

\section{Action ground states on polygons}
\label{sec-actionGS}

This section is devoted to the study of action ground states as defined in \eqref{eq-actiongs} for polygons in $\R^2$. In particular, for the purposes of the present paper we need to develop a detailed analysis of action ground states on polygons, including existence results and information about their mass depending on the actual value of $\lambda$, and a fine asymptotic analysis as $\lambda\to+\infty$ in the case $p=4$. This is done in the next two subsections, in which $\Omega\subset\R^2$ will always denote a given polygon.

%%%%%%%%%%%%%%%%%%%%%%%%%%%%%%%%%%%%%%%%%%%%%%%%%%%%%%%%%%%%%%%%%%%%%%%%%%%%%%%%%%%%%%%%%%%%%%%%%%%%%%%%%%%%%%%%%%%%%%%%%%%%%%%%%%%%%%%%%%%%%%%%%%%%%%%%%%%%%%%%%%%%%%%%%%%%%%%%%%%%%%%%%%%%%%%%%%%%%%%%%%%%%%%%%%%%%%%%%%%%%%%%%%%%
%%%%%%%%%%%%%%%%%%%%%%%%%%%%%%%%%%%%%%%%%%%%%%%%%%%%%%%%%%%%%%%%%%%%%%%%%%%%%%%%%%%%%%%%%%%%%%%%%%%%%%%%%%%%%%%%%%%%%%%%%%%%%%%%%%%%%%%%%%%%%%%%%%%%%%%%%%%%%%%%%%%%%%%%%%%%%%%%%%%%%%%%%%%%%%%%%%%%%%%%%%%%%%%%%%%%%%%%%%%%%%%%%%%%

\subsection{Existence and qualitative properties}
\label{subsec-exandprop}

Given $\lambda\in\R$, set
\begin{gather}
\label{eq-mmeno} M_p^-(\lambda):=\inf\left\{ \|u\|_{L^2(\Omega)}^2 : u\in\NN_p(\lambda,\Omega),\,J_p(u,\lambda,\Omega)=\JJ_p(\lambda,\Omega) \right\},\\
\label{eq-mpiu} M_p^+(\lambda):=\sup\left\{ \|u\|_{L^2(\Omega)}^2 : u\in\NN_p(\lambda,\Omega),\,J_p(u,\lambda,\Omega)=\JJ_p(\lambda,\Omega) \right\}\,.
\end{gather}
The next results provide a complete portrait of existence of action ground states and relate the quantities above with the action ground state level $\JJ_{p}$.

\begin{proposition}
	\label{agsprop}
	Let $p>2$. Then:
	\begin{enumerate}[label=(\roman*)]
		\item $\JJ_p(\lambda,\Omega)$, $M_p^-(\lambda)$ and $M_p^+(\lambda)$ are attained if and only if $\lambda >0$;
		
		\smallskip
		\item the function $\JJ_p(\cdot,\Omega) : (0,+\infty) \to (0,+\infty)$ is strictly increasing and locally Lipschitz continuous;
		
		\smallskip
		\item for every $\lambda >0$, the left and right derivatives of $\JJ_p(\cdot,\Omega)$ satisfy
		\[
		\JJ_{p,-}'(\lambda,\Omega)=\frac12 M_p^+(\lambda)\qquad\text{and}\qquad \JJ_{p,+}'(\lambda,\Omega)=\frac12 M_p^-(\lambda);
		\]
		
		\smallskip
		\item there exists an at most countable set $Z_p \subset (0,+\infty)$ such that
		\[
		M_p^-(\lambda)=M_p^+(\lambda)=: \mu(\lambda)\qquad\forall\lambda \in (0,+\infty)\setminus Z_p.
		\]
		In particular, $\JJ_p(\cdot,\Omega)$ is differentiable on $(0,+\infty) \setminus Z_p$ with
		\[
		\JJ_p'(\lambda,\Omega) = \frac12 \mu(\lambda)\qquad\forall\lambda \in (0,+\infty)\setminus Z_p.
		\]
	\end{enumerate}
\end{proposition}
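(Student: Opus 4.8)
The plan is to establish the four items in sequence, since each relies on the previous one. Throughout, I will repeatedly use the identity $J_p(v,\lambda,\Omega)=\frac{p-2}{2p}\|v\|_{L^p(\Omega)}^p$ on the Nehari manifold, together with the equivalent scaling characterization $\JJ_p(\lambda,\Omega)=\frac{p-2}{2p}\inf_{w\neq 0}\left(\frac{\|\nabla w\|_{L^2(\Omega)}^2+\lambda\|w\|_{L^2(\Omega)}^2}{\|w\|_{L^p(\Omega)}^2}\right)^{p/(p-2)}$, obtained by optimizing $t\mapsto J_p(tw,\lambda,\Omega)$ over $t>0$; this makes the $\lambda$-dependence transparent as an infimum of an increasing family of functions of $\lambda$.

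\medskip

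\emph{Item (i).} For the ``if'' direction with $\lambda>0$, I would take a minimizing sequence $(u_n)$ for $\JJ_p(\lambda,\Omega)$, normalized to lie on $\NN_p(\lambda,\Omega)$; since $\lambda>0$ the quantity $\|\nabla u_n\|_{L^2}^2+\lambda\|u_n\|_{L^2}^2$ is an equivalent norm on $H^1(\Omega)$, and the Nehari constraint plus the bounded action show $(u_n)$ is bounded in $H^1(\Omega)$ and bounded away from $0$ in $L^p$. Because $\Omega$ is a bounded polygon, the embedding $H^1(\Omega)\hookrightarrow L^p(\Omega)$ is compact for every $p>2$, so up to a subsequence $u_n\to u$ strongly in $L^p$ and weakly in $H^1$; then $u\neq 0$, $u$ lies on or inside the Nehari set, and after rescaling onto $\NN_p$ weak lower semicontinuity of the norm gives that $u$ (rescaled) attains $\JJ_p$. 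This proves $\JJ_p(\lambda,\Omega)$ is attained. To see $M_p^\pm$ are attained, note that the set of ground states is nonempty and, by the same compactness argument applied to a sequence of ground states whose masses approach the $\inf$ (resp.\ $\sup$), it is closed in $H^1$ and bounded (all ground states have the same $L^p$ norm, hence—via Nehari and $\lambda>0$—uniformly bounded $H^1$ norm); the mass functional $u\mapsto\|u\|_{L^2}^2$ is continuous under the strong $L^p$/weak $H^1$ convergence we extract, so the extremal masses are achieved. For the ``only if'' direction, if $\lambda\le 0$ I would exhibit a minimizing sequence escaping to $0$ in $H^1$: using constant functions (or slowly spreading bumps) one checks $\JJ_p(\lambda,\Omega)=0$ when $\lambda\le 0$ — for $\lambda=0$ test with constants, for which $\|\nabla c\|_{L^2}=0$ so the Nehari-rescaled action tends to $0$; for $\lambda<0$ one does even better. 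Since $\JJ_p>0$ cannot be attained at value $0$ (any Nehari function has positive action), nothing is attained.

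\medskip

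\emph{Items (ii) and (iii).} Strict monotonicity of $\JJ_p(\cdot,\Omega)$ on $(0,+\infty)$ follows from the scaling characterization: for $\lambda_1<\lambda_2$, evaluating a ground state $u$ at frequency $\lambda_2$ in the Rayleigh-type quotient at frequency $\lambda_1$ gives a strictly smaller value (strictly, since $\|u\|_{L^2}^2>0$), hence $\JJ_p(\lambda_1,\Omega)<\JJ_p(\lambda_2,\Omega)$. For local Lipschitz continuity and the one-sided derivative formulas, the key computation is that along a fixed ground state $u$ at frequency $\lambda_0$, the function $\lambda\mapsto J_p(t_\lambda u,\lambda,\Omega)$ (with $t_\lambda$ the Nehari rescaling) is a concave, differentiable function of $\lambda$ with derivative $\frac12\|t_\lambda u\|_{L^2}^2$ at $\lambda=\lambda_0$ equal to $\frac12\|u\|_{L^2}^2$, because $t_{\lambda_0}=1$ and the derivative of $t_\lambda$ drops out by the envelope theorem (the Nehari constraint is exactly the criticality of $t\mapsto J_p(tu,\lambda,\Omega)$). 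Thus $\JJ_p(\lambda,\Omega)=\inf_{u\ \mathrm{g.s.\ at\ some\ freq.}} J_p(t_\lambda u,\lambda,\Omega)$ is an infimum of concave differentiable functions, hence locally Lipschitz, and its one-sided derivatives are given by the classical Danskin-type formula: $\JJ_{p,+}'(\lambda_0,\Omega)$ equals the infimum of $\frac12\|u\|_{L^2}^2$ over minimizers $u$ (giving $\frac12 M_p^-(\lambda_0)$, the smallest mass), while $\JJ_{p,-}'(\lambda_0,\Omega)$ equals the supremum, i.e.\ $\frac12 M_p^+(\lambda_0)$. I would make this rigorous by proving the two inequalities directly: upper bound on the difference quotient by plugging a fixed ground state at $\lambda_0$ into the variational problem at nearby $\lambda$, and lower bound by using a ground state at the nearby $\lambda$ and letting it converge (via the compactness of item (i)) to a ground state at $\lambda_0$ whose mass realizes the appropriate extremum. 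I expect this Danskin/envelope argument, and in particular verifying that the limiting ground state has precisely the extremal mass $M_p^-$ or $M_p^+$ (and not something strictly between), to be the main technical obstacle — it requires a careful semicontinuity-of-mass argument along the minimizing sequences from item (i).

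\medskip

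\emph{Item (iv).} Since $\JJ_p(\cdot,\Omega)$ is monotone on $(0,+\infty)$, it is differentiable outside an at most countable set $Z_p$; at every point of differentiability the left and right derivatives coincide, so by item (iii) $M_p^+(\lambda)=M_p^-(\lambda)$ there, and we may call the common value $\mu(\lambda)$, with $\JJ_p'(\lambda,\Omega)=\frac12\mu(\lambda)$. This is immediate once (ii)–(iii) are in place and requires no further work beyond citing the standard fact that a monotone function on an interval has at most countably many points of non-differentiability.
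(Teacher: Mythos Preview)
Your approach is correct and is essentially what the paper does (the paper dispatches item (i) via boundedness of $\Omega$ and defers items (ii)--(iv) to \cite{DST-23}, where precisely this envelope/Danskin-type argument---upper bound by testing with a ground state at $\lambda_0$, lower bound by passing to the limit in ground states at nearby $\lambda$---is carried out). Two small slips worth fixing: for $\lambda=0$ constants do not lie on $\NN_p(0,\Omega)$ (the Nehari constraint forces $\|\nabla u\|_{L^2}^2>0$), so you need near-constant test functions to drive the quotient to zero; and the fiber maps $\lambda\mapsto J_p(t_\lambda u,\lambda,\Omega)=\frac{p-2}{2p}\big((\|\nabla u\|_{L^2}^2+\lambda\|u\|_{L^2}^2)/\|u\|_{L^p}^2\big)^{p/(p-2)}$ are \emph{convex}, not concave---but since you ultimately argue via the two direct one-sided bounds (which only need differentiability of these maps and the compactness from item (i)), this does not affect your proof.
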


\begin{proof}
	Item \emph{(i)} is an immediate consequence of the boundedness of $\Omega$ and classical variational methods. On the other hand, Items \emph{(ii)-(iv)} can be proved arguing as in \cite[proofs of Lemma 2.4 and Theorem 1.5]{DST-23}, since there is no actual difference in these arguments when passing from Dirichlet to Neumann boundary conditions.
\end{proof}

\begin{lemma}
	\label{semic}
	Let $p>2$ and $\lambda>0$. Then:
	\begin{gather}
	\liminf_{q \to p} M_q^-(\lambda) \ge M_p^-(\lambda), \label{Minf} \\
	\limsup_{q \to p} M_q^+(\lambda) \le M_p^+(\lambda). \label{Msup}
	\end{gather}
\end{lemma}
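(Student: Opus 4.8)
The plan is to prove the two semicontinuity estimates by a direct comparison argument, using the fact (Proposition \ref{agsprop}(i)) that, for every $\lambda>0$ and every $q>2$, the infimum $M_q^-(\lambda)$ and the supremum $M_q^+(\lambda)$ are attained by genuine action ground states on the bounded set $\Omega$. The two inequalities are symmetric in spirit, so I would focus on \eqref{Minf} and treat \eqref{Msup} analogously (with the roles of liminf/limsup and of the extremal masses swapped).

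First, to prove \eqref{Minf}, take a sequence $q_n\to p$ realizing $\liminf_{q\to p} M_q^-(\lambda)$, and for each $n$ let $u_n\in\NN_{q_n}(\lambda,\Omega)$ be an action ground state at frequency $\lambda$ with $\|u_n\|_{L^2(\Omega)}^2=M_{q_n}^-(\lambda)$; without loss of generality $u_n\geq 0$. The first task is to show that $(u_n)$ is bounded in $H^1(\Omega)$. This follows from the Nehari identity together with $J_{q_n}(u_n,\lambda,\Omega)=\frac{q_n-2}{2q_n}\|u_n\|_{L^{q_n}(\Omega)}^{q_n}=\JJ_{q_n}(\lambda,\Omega)$: since $\JJ_{q_n}(\lambda,\Omega)$ stays bounded as $q_n\to p$ (one gets an upper bound by plugging a fixed, $n$-independent test function suitably normalized onto each Nehari manifold, and a lower bound away from $0$ from the Gagliardo--Nirenberg inequality on the bounded set $\Omega$), one controls $\|u_n\|_{L^{q_n}(\Omega)}$, hence $\|u_n\|_{L^2(\Omega)}$ via H\"older on the bounded domain, and then $\|\nabla u_n\|_{L^2(\Omega)}^2=\|u_n\|_{L^{q_n}(\Omega)}^{q_n}-\lambda\|u_n\|_{L^2(\Omega)}^2$ is bounded. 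Up to a subsequence, $u_n\rightharpoonup u$ weakly in $H^1(\Omega)$ and strongly in $L^r(\Omega)$ for all $r<\infty$ (Rellich, since $\Omega$ is bounded), and pointwise a.e.

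Next I would identify the limit $u$ as an action ground state at frequency $\lambda$ for the exponent $p$, with $\|u\|_{L^2(\Omega)}^2=\liminf_{q\to p} M_q^-(\lambda)$, which immediately gives $M_p^-(\lambda)\leq \liminf_{q\to p}M_q^-(\lambda)$ by definition of $M_p^-$. Strong $L^r$-convergence gives $\|u_n\|_{L^{q_n}(\Omega)}^{q_n}\to\|u\|_{L^p(\Omega)}^p$ and $\|u_n\|_{L^2(\Omega)}^2\to\|u\|_{L^2(\Omega)}^2$; one must check $u\neq 0$, which follows because $\JJ_{q_n}(\lambda,\Omega)=\frac{q_n-2}{2q_n}\|u_n\|_{L^{q_n}(\Omega)}^{q_n}$ is bounded below by a positive constant, so $\|u\|_{L^p(\Omega)}^p>0$. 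Weak lower semicontinuity of the Dirichlet integral then yields $\|\nabla u\|_{L^2(\Omega)}^2+\lambda\|u\|_{L^2(\Omega)}^2\leq\|u\|_{L^p(\Omega)}^p$, i.e. $u$ lies on or ``inside'' $\NN_p(\lambda,\Omega)$; projecting $u$ onto $\NN_p(\lambda,\Omega)$ by a scalar multiple $\sigma\leq 1$ and passing to the limit in $\JJ_{q_n}(\lambda,\Omega)\leq \liminf \JJ_p(\lambda,\Omega)$-type inequalities (using continuity of $q\mapsto\JJ_q(\lambda,\Omega)$, which one also gets from the same compactness) forces $\sigma=1$, hence $u\in\NN_p(\lambda,\Omega)$ and $J_p(u,\lambda,\Omega)=\JJ_p(\lambda,\Omega)$. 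Finally $\|u\|_{L^2(\Omega)}^2=\lim\|u_n\|_{L^2(\Omega)}^2=\lim M_{q_n}^-(\lambda)=\liminf_{q\to p}M_q^-(\lambda)$, so this particular ground state has mass equal to $\liminf_{q\to p}M_q^-(\lambda)$, and \eqref{Minf} follows. For \eqref{Msup} the argument is the same, starting from maximizers $u_n$ with $\|u_n\|_{L^2(\Omega)}^2=M_{q_n}^+(\lambda)$ along a sequence realizing the limsup, and concluding that the limit is an action ground state for $p$ with mass $\limsup_{q\to p}M_q^+(\lambda)$, which is therefore $\leq M_p^+(\lambda)$.

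The main obstacle I expect is the bookkeeping around the \emph{variable exponent} $q_n\to p$: one cannot quote weak lower semicontinuity of $\|\cdot\|_{L^{q_n}}^{q_n}$ directly because the functional changes with $n$, so the key point is that strong $L^r$-convergence on the bounded domain upgrades the $L^{q_n}$-term to an honest limit (not merely a liminf), and one has to combine this carefully with the weak lower semicontinuity of the (fixed) Dirichlet term and with the continuity of $\lambda\mapsto\JJ_q(\lambda,\Omega)$ and of $q\mapsto\JJ_q(\lambda,\Omega)$ to pin the scaling factor $\sigma$ exactly at $1$. A secondary technical point is the uniform (in $n$) $H^1$-bound and the uniform positive lower bound on $\JJ_{q_n}(\lambda,\Omega)$ as $q_n\to p$; both are routine on a bounded set via Gagliardo--Nirenberg \eqref{GN-bounded}, but they must be stated with constants uniform on a neighbourhood of $p$.
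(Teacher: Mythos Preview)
Your proposal is correct and follows essentially the same approach as the paper: extract a subsequence of ground states at varying exponents, use the uniform bounds on $\JJ_{q_n}(\lambda,\Omega)$ to get $H^1$-boundedness, pass to a limit that is identified as an action ground state at exponent $p$, and read off the semicontinuity from the mass of this limit. The only cosmetic difference is that the paper phrases the crucial step as proving \emph{strong} $H^1$-convergence of $u_n$ to $u$ (via a contradiction using $\sigma_{\lambda,p}(u)<1$), while you phrase it as showing directly that the Nehari-projection factor $\sigma$ of the limit equals $1$; these are the same argument in slightly different packaging.
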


\begin{proof}
	First we show that, for every fixed $\lambda>0$, $\JJ_p(\lambda,\Omega)$ is a continuous function of $p$ on $(2,\infty)$. To this aim it suffices to show that $\JJ_{p_n}(\lambda,\Omega)\to\JJ_{p}(\lambda,\Omega)$, up to subsequences, for every sequence $p_n \to p \in (2,\infty)$ as $n\to+\infty$. Observe that, if $v\in\NN_p(\lambda,\Omega)$ is such that $J_p(v,\lambda,\Omega)=\JJ_p(\lambda,\Omega)$, then setting
	\begin{equation}
	\label{eq-sigmauno}
	\sigma_{\lambda,p_n}(v):=\left(\frac{\|v\|_{L^p(\Omega)}^p}{\|v\|_{L^{p_n}(\Omega)}^{p_n}}\right)^{\frac{1}{p_n-2}},
	\end{equation}
	we obtain that $\sigma_{\lambda,p_n}(v)v\in\NN_{p_n}(\lambda,\Omega)$ and $\displaystyle\lim_n\sigma_{\lambda,p_n}(v)=1$ by dominated convergence, so that
	\begin{align}
	\label{eq-Jbound}
	\limsup_n\JJ_{p_n}(\lambda,\Omega)&\leq\limsup_n J_{p_n}(\sigma_{\lambda,p_n}(v)v,\lambda,\Omega)\nonumber\\
	&=\limsup_n\frac{p_n-2}{2p_n}\sigma_{\lambda,p_n}(v)^{p_n}\|v\|_{L^{p_n}(\Omega)}^{p_n}\nonumber\\
	&=\limsup_n\frac{p_n-2}{2p_n}\|v\|_{L^{p}(\Omega)}^{p}=J_p(v,\lambda,\Omega)=\JJ_p(\lambda,\Omega)
	\end{align}
	Conversely, for each $n$, let $u_n\in\NN_{p_n}(\lambda,\Omega)$ be such that $J_{p_n}(u_n,\lambda,\Omega)=\JJ_{p_n}(\lambda,\Omega)$. By \eqref{eq-Jbound}, $\|u_n\|_{L^{p_n}(\Omega)}^{p_n}$ is uniformly bounded and so, by \eqref{eq-nehari}, $(u_n)_n$ is uniformly bounded in $H^1(\Omega)$. Thus, up to subsequences, $u_n\rightharpoonup u$ in $H^1(\Omega)$ and $u_n\to u$ in $L^q(\Omega)$, for every $q \geq2$, and a.e. on $\Omega$. As a consequence, dominated convergence implies
	\[
	\|u_n\|_{L^{p_n}(\Omega)}^{p_n} \to \|u\|_{L^p(\Omega)}^p\qquad\text{as }n\to+\infty.
	\]
	Note that this immediately gives that $u\not\equiv 0$, since if this were not the case we would have $u_n\to 0$ in $H^1(\Omega)$ as $n\to+\infty$, which is prevented by the fact that the Sobolev inequality and $u_n\in\NN_{p_n}(\lambda,\Omega)$ yield
	\[
	\|u_n\|_{H^1(\Omega)}\geq\left(C_{p_n}\min\{1,\lambda\}\right)^{\frac{1}{p_n-2}},\qquad\text{with}\qquad C_{p_n}:=\inf_{w\in H^1(\Omega)\setminus\{0\}}\frac{\|w\|_{H^1(\Omega)}}{\|w\|_{L^{p_n}(\Omega)}}>c_p>0.
	\]
	Now, setting
	\begin{equation}
	\label{eq-sigmadue}
	\sigma_{\lambda,p}(u_n):=\left(\frac{\|\nabla u_n\|_{L^{2}(\Omega)}^{2}+\lambda\|u_n\|_{L^{2}(\Omega)}^{2}}{\|u_n\|_{L^{p}(\Omega)}^{p}}\right)^{\frac{1}{p-2}}=\left(\frac{\|u_n\|_{L^{p_n}(\Omega)}^{p_n}}{\|u_n\|_{L^{p}(\Omega)}^{p}}\right)^{\frac{1}{p-2}},
	\end{equation}
	we obtain $\sigma_{\lambda,p}(u_n)u_n\in\NN_{p}(\lambda,\Omega)$,  $\displaystyle\lim_n\sigma_{\lambda,p}(u_n)=1$, and
	\begin{multline*}
	\JJ_p(\lambda,\Omega) \le\liminf_n J_{p}(\sigma_{\lambda,p}(u_n)u_n,\lambda,\Omega) = \liminf_n\frac{p-2}{2p}\sigma_{\lambda,p}(u_n)^p\|u_n\|_{L^p(\Omega)}^p\\
	= \liminf_n\frac{p_n-2}{2p_n}\|u_n\|_{L^{p_n}(\Omega)}^{p_n} =\liminf_n J_{p_n}(u_n,\lambda,\Omega)=\liminf_n\JJ_{p_n}(\lambda,\Omega),
	\end{multline*}
	which, combined with \eqref{eq-Jbound}, proves the continuity in $p$ of $\JJ_p(\lambda,\Omega)$.
	
	Furthermore, note that, letting $u_n$ be as above and $u$ be (up to subsequences) its weak limit in $H^1(\Omega)$ as $n\to+\infty$, it is easy to prove that the convergence of $u_n$ to $u$  is actually strong in $H^1(\Omega)$, so that $u\in\NN_p(\lambda,\Omega)$ and $J_p(u,\lambda,\Omega)=\JJ_p(\lambda,\Omega)$. Indeed, we already know that $u_n\to u$ strongly in $L^q(\Omega)$ for every $q\geq2$. If, by contradiction, we do not have $\|\nabla u_n \|_{L^2(\Omega)} \to \|\nabla u\|_{L^2(\Omega)}$, then by \eqref{eq-sigmadue} and the weak lower semicontinuity of the norms it follows
	\[
	\sigma_{\lambda,p}(u) < \liminf_n \sigma_{\lambda,p}(u_n) =1,
	\]
	entailing
	\begin{multline*}
	\JJ_{p}(\lambda,\Omega)\leq J_{p}(\sigma_{\lambda,p}(u)u,\lambda,\Omega) = \frac{p-2}{2p}\sigma_{p,\lambda}(u)^p \|u\|_{L^p(\Omega)}^p =  \sigma_{\lambda,p}^p(u)\lim_n\frac{p_n-2}{2p_n}  \|u_n\|_{L^{p_n}(\Omega)}^{p_n} \\
	= \sigma_{\lambda,p}^p(u)\lim_n J_{p_n}(u_n,\lambda,\Omega) = \sigma_{\lambda,p}^p(u)\lim_n \JJ_{p_n}(\lambda,\Omega)= \sigma_{\lambda,p}^p(u) \JJ_p(\lambda,\Omega) < \JJ_p(\lambda,\Omega),
	\end{multline*}
	that is a contradiction. Hence, up to subsequences $u_n\to u$ in $H^1(\Omega)$ as $n\to+\infty$, so that $u\in\NN_{p}(\lambda,\Omega)$ and
	\[
	J_{p}(u,\lambda,\Omega) = \frac{p-2}{2p} \|u\|_{L^p(\Omega)}^p = \lim_n\frac{p_n-2}{2p_n}  \|u_n\|_{L^{p_n}(\Omega)}^{p_n}= \lim_n \JJ_{p_n}(\lambda,\Omega) = \JJ_p(\lambda,\Omega).
	\]
	Combining all the previous results, \eqref{Minf} and \eqref{Msup} follow by the definition of $M_p^{\pm}(\lambda)$ and the fact that they are attained.
\end{proof}

%%%%%%%%%%%%%%%%%%%%%%%%%%%%%%%%%%%%%%%%%%%%%%%%%%%%%%%%%%%%%%%%%%%%%%%%%%%%%%%%%%%%%%%%%%%%%%%%%%%%%%%%%%%%%%%%%%%%%%%%%%%%%%%%%%%%%%%%%%%%%%%%%%%%%%%%%%%%%%%%%%%%%%%%%%%%%%%%%%%%%%%%%%%%%%%%%%%%%%%%%%%%%%%%%%%%%%%%%%%%%%%%%%%%
%%%%%%%%%%%%%%%%%%%%%%%%%%%%%%%%%%%%%%%%%%%%%%%%%%%%%%%%%%%%%%%%%%%%%%%%%%%%%%%%%%%%%%%%%%%%%%%%%%%%%%%%%%%%%%%%%%%%%%%%%%%%%%%%%%%%%%%%%%%%%%%%%%%%%%%%%%%%%%%%%%%%%%%%%%%%%%%%%%%%%%%%%%%%%%%%%%%%%%%%%%%%%%%%%%%%%%%%%%%%%%%%%%%%

\subsection{Asymptotics for large $\lambda$ in the $L^2$-critical case}
\label{subsec-asymptotics}

Here we discuss the asymptotic behavior of $\JJ_4(\lambda,\Omega)$ and of the mass of the associated action ground states for large $\lambda$. 

\begin{proposition}
	\label{asymp}
	Let $\overline\alpha\in(0,\pi)$ be the amplitude of the smallest internal angle of $\Omega$, and $\overline\mu_{\overline\alpha}$ be the corresponding value defined in \eqref{eq:mua}. Then:
	\begin{enumerate}[label=(\roman*)]
		\item $\displaystyle\lim_{\lambda\to+\infty}\big( \JJ_4(\lambda,\Omega) - \JJ_4(\lambda,\Sigma_{\overline\alpha})\big) = 0$, with $\Sigma_{\overline\alpha}$ the set corresponding to $\overline\alpha$ as in \eqref{eq-Calpha};
		
		\smallskip
		\item for every family $(u_\lambda)_{\lambda>0}$ such that $u_\lambda\in\NN_4(\lambda,\Omega)$ and $J_4(u_\lambda,\lambda,\Omega)=\JJ_4(\lambda,\Omega)$, there results
		\[
		\lim_{\lambda\to+\infty}\|u_\lambda\|_{L^2(\Omega)}^2=\overline\mu_{\overline\alpha}.
		\]
	\end{enumerate}
\end{proposition}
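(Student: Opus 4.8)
The plan is to establish the two items essentially in parallel via a matching upper/lower bound argument. For the upper bound on $\JJ_4(\lambda,\Omega)$, I would localize the soliton $\phi_\lambda$ near the vertex $V$ of $\Omega$ realizing the smallest angle $\overline\alpha$. Concretely, since locally around $V$ the polygon coincides with a translate of the sector $\Sigma_{\overline\alpha}$, and since $\phi_\lambda$ decays exponentially at rate $\sqrt\lambda$ (by the scaling \eqref{eq:phil} and the exponential decay of $\phi_1$), one can cut off the restriction of $\phi_\lambda$ to $\Sigma_{\overline\alpha}$ at distance, say, $\lambda^{-1/4}$ from $V$, multiply by a smooth cutoff, and obtain a competitor $w_\lambda\in H^1(\Omega)$ whose $H^1$ norm differs from that of $\phi_\lambda|_{\Sigma_{\overline\alpha}}$ only by exponentially small (in $\lambda$) corrections. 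After normalizing $w_\lambda$ onto $\NN_4(\lambda,\Omega)$ via a factor $\sigma_\lambda\to1$ exponentially fast, and using Proposition \ref{prop:AGSsector}(i) which identifies $\JJ_4(\lambda,\Sigma_{\overline\alpha})$ with the soliton restriction, one gets
\[
\JJ_4(\lambda,\Omega)\le \JJ_4(\lambda,\Sigma_{\overline\alpha}) + o(1)\quad\text{as }\lambda\to+\infty,
\]
in fact with an exponentially small error.

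For the lower bound — which I expect to be the crux — I would take a family of action ground states $u_\lambda\in\NN_4(\lambda,\Omega)$ (existing by Proposition \ref{agsprop}(i)) and run a concentration analysis in the spirit of \cite{NT1}. First, by testing against the upper bound just obtained together with the known scaling $\JJ_4(\lambda,\Sigma_{\overline\alpha})=\tfrac{\overline\alpha}{2\pi}\JJ_4(\lambda,\R^2)$ and the fact that $\JJ_4(\lambda,\R^2)\to0$ (being $\tfrac{p-2}{2p}\|\phi_\lambda\|_{L^4}^4$ at $p=4$ — wait, at $p=4$ this is a fixed positive multiple of... rather one uses that $\|\phi_\lambda\|_{L^4(\R^2)}^4=\lambda\|\phi_1\|_{L^4}^4$, so $\JJ_4(\lambda,\R^2)=\tfrac14\lambda\|\phi_1\|_{L^4(\R^2)}^4\to+\infty$; one instead works with the rescaled functions $v_\lambda(x):=\lambda^{-1/2}u_\lambda(\lambda^{-1/2}x)$). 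Passing to these rescaled functions $v_\lambda$, which live on the dilated domain $\lambda^{1/2}\Omega$ and satisfy $-\Delta v_\lambda + v_\lambda = v_\lambda^3$ with Neumann conditions and with $\tfrac14\|v_\lambda\|_{L^4}^4$ converging to $\liminf\,\tfrac{4\pi}{\overline\alpha}\cdot\tfrac{1}{4\pi}\cdots$ — more carefully, to the appropriate limit of $\JJ_4(\lambda,\Omega)$ normalized — one shows the masses $\|v_\lambda\|_{L^2}^2=\|u_\lambda\|_{L^2}^2$ stay bounded and bounded away from zero, then extracts a profile. The profile must concentrate at a single boundary point because an interior concentration would force the mass to be $\ge\overline\mu$ and an edge concentration $\ge\overline\mu_\pi>\overline\mu_{\overline\alpha}$, contradicting the upper bound; concentration at a vertex of angle $\beta>\overline\alpha$ is likewise ruled out since it would give a level $\ge\tfrac{\beta}{2\pi}\JJ_4(\lambda,\R^2)>\JJ_4(\lambda,\Sigma_{\overline\alpha})$. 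Hence concentration occurs at $V$, the dilated domains converge (in the Hausdorff/local-$C^1$ sense) to the full sector $\Sigma_{\overline\alpha}$ with \emph{no} boundary correction since the boundaries agree in a neighborhood of $V$, and lower semicontinuity plus Proposition \ref{prop:AGSsector} give $\liminf_\lambda(\JJ_4(\lambda,\Omega)-\JJ_4(\lambda,\Sigma_{\overline\alpha}))\ge0$ together with $\|v_\lambda\|_{L^2}^2\to\|\phi_1|_{\Sigma_{\overline\alpha}}\|_{L^2}^2=\overline\mu_{\overline\alpha}$, establishing \emph{(ii)}.

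The main obstacle is the concentration-compactness step: ruling out mass splitting (vanishing, dichotomy, or partial escape of mass to infinity in the rescaled picture) and, crucially, showing that no loss of compactness occurs when passing to the sector limit. The key technical input making this tractable on a polygon — and unavailable on smooth domains, as the introduction stresses — is that near $V$ the domain \emph{is} the sector, so the limiting profile equation and the Nehari/Pólya–Szegő machinery of Proposition \ref{prop:AGSsector} apply with no perturbative boundary term; the exponential decay of $\phi_1$ then upgrades the $o(1)$ in \emph{(i)} to the exponentially small error claimed in the introduction. A secondary point requiring care is the uniform (in $\lambda$) elliptic and decay estimates on $u_\lambda$ needed to justify that the mass outside a shrinking neighborhood of $V$ is negligible; these follow from standard Moser iteration and comparison-function arguments applied to \eqref{eq-NLS}, exactly as in \cite{NT1}.
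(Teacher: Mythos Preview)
Your proposal is correct and follows the paper's approach: upper bound by transplanting a truncated soliton to the vertex of angle $\overline\alpha$, lower bound by invoking the concentration/decay estimates of \cite{NT1} to localize the ground state $u_\lambda$ at that vertex and transfer it to $\Sigma_{\overline\alpha}$, and part (ii) by rescaling to obtain a minimizing sequence for $\JJ_4(1,\Sigma_{\overline\alpha})$ and identifying its limit via Proposition~\ref{prop:AGSsector}. The paper's lower bound is executed via an explicit truncation $v_\lambda=(u_\lambda-\varepsilon_\lambda)_+$ (with $\varepsilon_\lambda$ the exponentially small maximum of $u_\lambda$ outside a shrinking ball around $V$) followed by Nehari projection onto $\NN_4(\lambda,\Sigma_{\overline\alpha})$, which sidesteps the dichotomy/multi-bump exclusion you flag as the main obstacle, since the \cite{NT1} localization already confines all local maxima to a single shrinking set.
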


\begin{proof}
	We divide the proof in three steps. The combination of the first two proves \emph{(i)}, while the last proves \emph{(ii)}. Also, note that throughout the proof it is understood that the limits are for $\lambda\to+\infty$ (we omit to repeat it).
	
	\smallskip
	\emph{Step 1): proof of $\JJ_4(\lambda,\Omega)\leq\JJ_4(\lambda,\Sigma_{\overline\alpha})+o(1)$}. Recall that, by Proposition \ref{prop:AGSsector}, the unique positive radial action ground state in $\NN_4(\lambda,\Sigma_{\overline\alpha})$ is the restriction to $\Sigma_{\overline\alpha}$ of the soliton $\phi_\lambda$ of $\R^2$.  Since $\phi_\lambda$ is given by \eqref{eq:phil} and the radial profile of $\phi_1$ decreases exponentially fast as $|x|\to+\infty$ (see, e.g., \cite{BL-83}), $\phi_\lambda(R)$ is exponentially small as $\lambda\to+\infty$, for every given $R>0$. 
	
	With no loss of generality let the corner point of $\partial\Omega$ corresponding to the angle $\overline\alpha$ be fixed in the origin of $\R^2$ in such a way that, locally around this point, $\Omega$ can be seen as a subset of the sector $\Sigma_{\overline\alpha}$. Take then $R>0$ such that the closure of $\Omega\cap B_R(0)$ contains no corner point of $\partial\Omega$ other than the origin (here $B_R(0)$ denotes the ball of $\R^2$ with radius $R$ centered at the origin) and define $v_\lambda\in H^1(\Omega)$ as the restriction to $\Omega$ of $(\phi_\lambda-\phi_\lambda(R))_+$. The exponential decay of $\phi_\lambda(R)$ ensures that for every $q\geq2$ there exists $\gamma_q>0$ so that
	\[
	\|v_\lambda\|_{L^q(\Omega)}^q=\|\phi_\lambda\|_{L^q(\Sigma_{\overline\alpha})}^q+o(e^{-\lambda^{\gamma_q}})\,,
	\]
	and since $\|\nabla v_\lambda\|_{L^2(\Omega)}\leq\|\nabla\phi_\lambda\|_{L^2(\Sigma_{\overline\alpha})}$ by construction, it follows that
	\[
	\begin{split}
	\sigma_\lambda(v_\lambda):=&\,\left(\frac{\|\nabla v_\lambda\|_{L^2(\Omega)}^2+\lambda\|v_\lambda\|_{L^2(\Omega)}^2}{\|v_\lambda\|_{L^4(\Omega)}^4}\right)^{1/2}\\
	\leq&\, \left(\frac{\|\nabla \phi_\lambda\|_{L^2(\Sigma_{\overline\alpha})}^2+\lambda\|\phi_\lambda\|_{L^2(\Sigma_{\overline\alpha})}^2}{\|\phi_\lambda\|_{L^4(\Sigma_{\overline\alpha})}^4+o(e^{-\lambda^{\gamma_4}})}\right)^{1/2}=1+o(e^{-\lambda^{\gamma_4}})\,.
	\end{split}
	\]
	Since $\sigma_\lambda(v_\lambda)v_\lambda\in\NN_4(\lambda,\Omega)$, we obtain
	\[
	\begin{split}
	\JJ_4(\lambda,\Omega)\leq J_4(\sigma_\lambda(v_\lambda)v_\lambda,\lambda,\Omega)=&\,\frac14\sigma_\lambda(v_\lambda)^4\|v_\lambda\|_{L^4(\Omega)}^4\\
	\leq&\,\frac14(1+o(e^{-\lambda^{\gamma_4}}))\|\phi_\lambda\|_{L^4(\Sigma_{\overline\alpha})}^4=\JJ_4(\lambda,\Sigma_{\overline\alpha})+o\big(e^{-\lambda^{\frac{\gamma_4}2}}\big)\,,
	\end{split}
	\]
	the last equality being justified by Proposition \ref{prop:AGSsector} and the fact that $\JJ_4(\lambda,\R^2)=\lambda\overline\mu/2$ for every $\lambda>0$ (see, e.g., \cite[Proposition 2.3]{DST-23}), so that $\|\phi_\lambda\|_{L^4(\Sigma_{\overline\alpha})}^4$ grows linearly with $\lambda$ as $\lambda\to+\infty$.
	
	\smallskip
	\emph{Step 2): proof of $\JJ_4(\lambda,\Omega)\geq\JJ_4(\lambda,\Sigma_{\overline\alpha})+o(1)$}. Let $u_\lambda\in\NN_4(\lambda,\Omega)$ be a positive function such that $J_4(u_\lambda,\lambda,\Omega)=\JJ_4(\lambda,\Omega)$. Arguing as in \cite[Theorem 2.1]{NT1} it can be shown that, for $\lambda$ large enough, there exists a connected set $\Omega_\lambda\subset\Omega$ with $\text{\normalfont diam}\big(\Omega_\lambda\big)\leq C\lambda^{-1/2}$, for a suitable constant $C>0$ independent of $\lambda$, such that $\Omega_\lambda$ contains every local maximum point of $u_\lambda$. Furthermore, exploiting Proposition \ref{prop:AGSsector} and Step 1 of the proof, as $\lambda$ increases one obtains that $\Omega_\lambda$ concentrates around the origin (that is the vertex of $\partial\Omega$ corresponding to the angle with amplitude $\overline\alpha$). Hence, letting $A_\lambda:=\Omega\cap B_{\lambda^{-1/4}}(0)$ and following again the argument in \cite[Theorem 2.3]{NT1}, one has
	\begin{equation}
	\label{eq:exp}
	u_\lambda(x)\leq C_1\sqrt\lambda e^{-c_2\lambda^{1/4}}\qquad\forall x\in\Omega\setminus A_\lambda\,,
	\end{equation}
	with $C_1,C_2>0$ independent of $\lambda$.
	
	Set then $\displaystyle\varepsilon_\lambda:=\max_{x\in\overline{\Omega\setminus A_\lambda}}u_\lambda(x)$ and $v_\lambda:=(u_\lambda-\varepsilon_\lambda)_+$. By construction, $v_\lambda\equiv0$ on $\Omega\setminus A_\lambda$, so that we can think of $v_\lambda$ as a compactly supported function on the sector $\Sigma_{\overline\alpha}$. Furthermore, since $\varepsilon_\lambda$ is exponentially small as $\lambda\to+\infty$, while $\|u_\lambda\|_{L^4(\Omega)}$ is at most linear in $\lambda$ by the estimate obtained in Step 1, direct computations yield
	\[
	\|v_\lambda\|_{L^4(\Sigma_{\overline\alpha})}^4=\|u_\lambda\|_{L^4(\Omega)}^4+r_\lambda
	\]
	with $r_\lambda$ exponentially small as $\lambda\to+\infty$. Since by construction we also have that $\|\nabla v_\lambda\|_{L^2(\Sigma_{\overline\alpha})}\leq\|\nabla u_\lambda\|_{L^2(\Omega)}$, $\|v_\lambda\|_{L^2(\Sigma_{\overline\alpha})}\leq\|u_\lambda\|_{L^2(\Omega)}$, it follows that
	\begin{equation}
	\label{sigma}
	\sigma_\lambda(v_\lambda):=\left(\frac{\|\nabla v_\lambda\|_{L^2(\Sigma_{\overline\alpha})}^2+\lambda\|v_\lambda\|_{L^2(\Sigma_{\overline\alpha})}^2}{\|v_\lambda\|_{L^4(\Sigma_{\overline\alpha})}^4}\right)^{1/2}\leq \left(\frac{\|\nabla u_\lambda\|_{L^2(\Omega)}^2+\lambda\|u_\lambda\|_{L^2(\Omega)}^2}{\|u_\lambda\|_{L^4(\Omega)}^4+r_\lambda}\right)^{1/2}=1+\widetilde{r}_\lambda\,,
	\end{equation}
	with $\widetilde{r}_\lambda$ exponentially small as $\lambda\to+\infty$, in turn entailing
	\[
	\JJ_4(\lambda,\Sigma_{\overline\alpha})\leq J_4(\sigma_\lambda(v_\lambda)v_\lambda,\lambda,\Sigma_{\overline\alpha})=\frac14\sigma_\lambda(v_\lambda)^4\|v_\lambda\|_{L^4(\Sigma_{\overline\alpha})}^4\leq \frac14\|u_\lambda\|_{L^4(\Omega)}^4+\widehat{r}_\lambda=\JJ_4(\lambda,\Omega)+\widehat{r}_\lambda\,,
	\]
	with $\widehat{r}_\lambda$ exponentially small as $\lambda\to+\infty$, that is the estimate we were seeking.

	\smallskip
	\emph{Step 3): proof of (ii).} 
	Let $u_\lambda$ be an action ground state in $\NN_4(\lambda, \Omega)$ and define again the function $v_\lambda\in H^1(\Sigma_{\overline\alpha})$ as in Step 2 above. Moreover, set
	\[
	w_\lambda:=\sigma_\lambda(v_\lambda)v_\lambda
	\]
	with $\sigma_\lambda(v_\lambda)$ as in \eqref{sigma}, so that $w_\lambda\in \NN_4(\lambda,\Sigma_{\overline\alpha})$ for every $\lambda$ and $J_4(w_\lambda,\lambda,\Sigma_{\overline\alpha})\leq \JJ_4(\lambda,\Sigma_{\overline\alpha})+\widehat{r}_\lambda$ for sufficiently large $\lambda$. As a consequence, the function $\overline w_\lambda(x): =\lambda^{-1/2} w_\lambda\big(x/\sqrt\lambda\big)$ belongs to $\NN_4(1,\Sigma_{\overline\alpha})$ and (by (i), Proposition \ref{prop:AGSsector} and the explicit expression of $\JJ_4(\lambda,\R^2)$) 
	\begin{align*}
	\JJ_4(1,\Sigma_{\overline\alpha})\leq J_4(\overline w_\lambda,1, \Sigma_{\overline\alpha}) &= \frac1\lambda J_4\left(w_\lambda ,\lambda,\Sigma_{\overline\alpha}\right)
	\leq \frac1\lambda\JJ_4(\lambda,\Sigma_{\overline\alpha})+\frac{\widehat{r}_\lambda}\lambda    \\
	& =  \frac1\lambda \frac{\overline\alpha}{2\pi}\JJ_4(\lambda,\R^2) + o(1) = \frac12\frac{\overline\alpha}{2\pi}\overline\mu +o(1)=\JJ_4(1,\Sigma_{\overline\alpha})+o(1).
	\end{align*}
	This shows that $(\overline w_\lambda)_{\lambda>0}$ is a minimizing sequence for $J_4(\cdot,1,\Sigma_{\overline\alpha})$ on $\NN_4(1,\Sigma_{\overline\alpha})$, and Proposition \ref{prop:AGSsector} then ensures that (up to a change of sign) $\overline w_\lambda$ converges strongly in $H^1(\Sigma_{\overline\alpha})$ to the restriction to $\Sigma_{\overline\alpha}$ of the soliton $\phi_\lambda$. Hence, $\|\overline w_\lambda\|_{L^2(\Sigma_{\overline\alpha})}^2=\|\phi_\lambda\|_{L^2(\Sigma_{\overline\alpha})}^2+o(1)=\overline\mu_\alpha+o(1)$, and a direct computation shows that the same is true for $\|w_\lambda\|_{L^2(\Sigma_{\overline\alpha})}^2$ too. Since the final computation of Step 2 above shows that $\sigma_\lambda(v_\lambda)\to1$ as $\lambda\to+\infty$, the asymptotic behaviour of $\|w_\lambda\|_{L^2(\Sigma_{\overline\alpha})}^2$ coincides with that of $\|v_\lambda\|_{L^2(\Sigma_{\overline\alpha})}^2$, and thus also with that of $\|u_\lambda\|_{L^2(\Omega)}$, since they differ by a correction that is exponentially small as $\lambda$ diverges to infinity. As this completes the proof of (ii), we conclude.
\end{proof}

\section{$L^2$-critical energy ground states on polygons}
\label{sec-energyGS}

The aim of this section is to discuss existence and nonexistence of energy ground states on polygons in the $L^2$-critical case, i.e. problem \eqref{eq-energygs} with $p=4$. Precisely, the main result of the section is the following proposition.

\begin{proposition}
	\label{egs}
	Let $\overline\alpha\in(0,\pi)$ be the amplitude of the smallest internal angle of  $\Omega$, and $\overline\mu_{\overline\alpha}$ be the corresponding value defined in \eqref{eq:mua}. Then
	\[
	\ee_4(\mu,\Omega)\begin{cases}
	<0 & \text{if }\mu\leq\overline\mu_{\overline\alpha}\\
	=-\infty & \text{if }\mu>\overline\mu_{\overline\alpha}
	\end{cases}
	\]
	and it is attained if and only if $\mu\in(0,\overline\mu_{\overline\alpha}]$.
\end{proposition}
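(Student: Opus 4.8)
The plan is to combine a sharp \emph{localized} Gagliardo--Nirenberg inequality on $\Omega$ with the concentration analysis already developed in the proof of Proposition~\ref{asymp}. The first ingredient I would establish is the existence of a constant $C=C(\Omega)>0$ such that
\begin{equation*}
\|u\|_{L^4(\Omega)}^4\le\frac{2}{\overline\mu_{\overline\alpha}}\,\|u\|_{L^2(\Omega)}^2\,\|\nabla u\|_{L^2(\Omega)}^2+C\,\|u\|_{L^2(\Omega)}^4\qquad\forall\,u\in H^1(\Omega).
\end{equation*}
To prove it, cover $\overline\Omega$ by finitely many open balls, each of which either is contained in $\Omega$, or meets $\partial\Omega$ only along the interior of one edge, or is centered at a single vertex of amplitude $\alpha_k$; since $\Omega$ is a polygon, in the latter two cases $\Omega$ coincides, inside the ball and up to a rigid motion, with the half-plane $\Sigma_\pi$, respectively with the sector $\Sigma_{\alpha_k}$, so that no boundary correction arises when restricting to the model domain. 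Take a smooth partition of unity $\{\zeta_j^2\}$ with $\sum_j\zeta_j^2\equiv1$ on $\overline\Omega$, subordinate to the cover. From $\int_\Omega u^4=\sum_j\int_\Omega(\zeta_ju)^2u^2$ and Cauchy--Schwarz one gets $\|u\|_{L^4(\Omega)}^2\le\sum_j\|\zeta_ju\|_{L^4(\Omega)}^2$; bounding each summand by the Gagliardo--Nirenberg inequality of the relevant model domain, whose optimal constant equals $2/\overline\mu$ for $\R^2$ (see \eqref{eq:muR2}), $2/\overline\mu_\pi$ for $\Sigma_\pi$, and $2/\overline\mu_{\alpha_k}$ for $\Sigma_{\alpha_k}$ (these last two following from \eqref{eq:mua} and Proposition~\ref{prop:AGSsector} by a scaling argument), all of which are $\le2/\overline\mu_{\overline\alpha}$ because $\overline\mu\ge\overline\mu_\pi\ge\overline\mu_{\overline\alpha}$ and $\overline\mu_{\alpha_k}\ge\overline\mu_{\overline\alpha}$ (as $\overline\mu_\beta$ is non-decreasing in $\beta$ and $\alpha_k\ge\overline\alpha$), then applying Cauchy--Schwarz once more and using $\sum_j\|\zeta_ju\|_{L^2(\Omega)}^2=\|u\|_{L^2(\Omega)}^2$ together with $\sum_j\|\nabla(\zeta_ju)\|_{L^2(\Omega)}^2=\|\nabla u\|_{L^2(\Omega)}^2+\int_\Omega u^2\sum_j|\nabla\zeta_j|^2$ (the cross terms vanish since $\sum_j\zeta_j\nabla\zeta_j\equiv0$ on $\Omega$), the displayed inequality follows with $C=\tfrac{2}{\overline\mu_{\overline\alpha}}\big\|\sum_j|\nabla\zeta_j|^2\big\|_{L^\infty(\Omega)}$.

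With this inequality at hand, for $u\in H_\mu^1(\Omega)$ one has $E_4(u,\Omega)\ge\tfrac12(1-\mu/\overline\mu_{\overline\alpha})\|\nabla u\|_{L^2(\Omega)}^2-\tfrac{C\mu^2}{4}$; hence $\ee_4(\mu,\Omega)\ge-C\overline\mu_{\overline\alpha}^2/4>-\infty$ whenever $\mu\le\overline\mu_{\overline\alpha}$, while testing with the constant function of mass $\mu$ gives $\ee_4(\mu,\Omega)\le-\mu^2/(4|\Omega|)<0$; this yields the first line of the dichotomy. If moreover $\mu<\overline\mu_{\overline\alpha}$, the coefficient of $\|\nabla u\|_{L^2(\Omega)}^2$ above is strictly positive, so minimizing sequences for $\ee_4(\mu,\Omega)$ are bounded in $H^1(\Omega)$; as $\Omega$ is a bounded Lipschitz domain, $H^1(\Omega)\hookrightarrow L^q(\Omega)$ compactly for every $q<\infty$, and weak lower semicontinuity of the Dirichlet integral shows that $\ee_4(\mu,\Omega)$ is attained. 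For $\mu>\overline\mu_{\overline\alpha}$, Proposition~\ref{prop:EGSsector} gives $\ee_4(\mu,\Sigma_{\overline\alpha})=-\infty$, so by density there is a compactly supported $w\in H_\mu^1(\Sigma_{\overline\alpha})$ with $E_4(w,\Sigma_{\overline\alpha})<0$; placing the vertex of amplitude $\overline\alpha$ at the origin and writing $w_s(x):=s\,w(sx)$, for $s$ large $w_s$ has mass $\mu$, is supported inside $\Omega$ near that vertex, and $E_4(w_s,\Omega)=s^2E_4(w,\Sigma_{\overline\alpha})\to-\infty$, whence $\ee_4(\mu,\Omega)=-\infty$. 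This settles everything except attainment at the critical mass.

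To prove that $\ee_4(\overline\mu_{\overline\alpha},\Omega)$ is attained, take $\mu_n\uparrow\overline\mu_{\overline\alpha}$ and energy ground states $u_{\mu_n}\in H_{\mu_n}^1(\Omega)$, which exist by the previous step, and distinguish two cases. If $\|\nabla u_{\mu_n}\|_{L^2(\Omega)}$ is bounded along a subsequence, then $u_{\mu_n}\rightharpoonup u$ in $H^1(\Omega)$ and $u_{\mu_n}\to u$ strongly in $L^2(\Omega)\cap L^4(\Omega)$, so $\|u\|_{L^2(\Omega)}^2=\overline\mu_{\overline\alpha}$ and $E_4(u,\Omega)\le\liminf_n\ee_4(\mu_n,\Omega)$; since testing $\ee_4(\mu_n,\Omega)$ with $\sqrt{\mu_n/\overline\mu_{\overline\alpha}}\,v$ for a near-optimal $v\in H_{\overline\mu_{\overline\alpha}}^1(\Omega)$ gives $\limsup_n\ee_4(\mu_n,\Omega)\le\ee_4(\overline\mu_{\overline\alpha},\Omega)$, it follows that $E_4(u,\Omega)=\ee_4(\overline\mu_{\overline\alpha},\Omega)$ and $u$ is the desired ground state. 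The case $\|\nabla u_{\mu_n}\|_{L^2(\Omega)}\to+\infty$ must be ruled out. Since $u_{\mu_n}\in\NN_4(\lambda_{u_{\mu_n}},\Omega)$, this divergence forces $\lambda_n:=\lambda_{u_{\mu_n}}\to+\infty$ (by \eqref{eq-lambda}), so — taking $u_{\mu_n}>0$, which is legitimate because $|u_{\mu_n}|$ is also an energy ground state and solves \eqref{eq-NLS} with homogeneous Neumann conditions — the concentration estimates of Step~2 in the proof of Proposition~\ref{asymp} apply and give that $u_{\mu_n}$ is exponentially small (in $\lambda_n$) outside $A_{\lambda_n}:=\Omega\cap B_{\lambda_n^{-1/4}}(0)$, the origin being the vertex of amplitude $\overline\alpha$. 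Setting $\varepsilon_n:=\max_{\overline{\Omega\setminus A_{\lambda_n}}}u_{\mu_n}$ and $\widetilde u_n:=(u_{\mu_n}-\varepsilon_n)_+$, the function $\widetilde u_n$ is supported in $A_{\lambda_n}$, hence for $n$ large in the region where $\Omega$ and $\Sigma_{\overline\alpha}$ coincide, and — exactly as in the proof of Proposition~\ref{asymp}, where one uses that $\|u_{\mu_n}\|_{L^4(\Omega)}$ is at most of polynomial order in $\lambda_n$ — it satisfies $\|\widetilde u_n\|_{L^2(\Sigma_{\overline\alpha})}^2=\mu_n+o(1)<\overline\mu_{\overline\alpha}$ and $E_4(\widetilde u_n,\Sigma_{\overline\alpha})\le E_4(u_{\mu_n},\Omega)+o(1)$, with exponentially small errors. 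But Proposition~\ref{prop:EGSsector} gives $\ee_4(\|\widetilde u_n\|_{L^2(\Sigma_{\overline\alpha})}^2,\Sigma_{\overline\alpha})=0$, so $E_4(\widetilde u_n,\Sigma_{\overline\alpha})\ge0$ and therefore $\ee_4(\mu_n,\Omega)=E_4(u_{\mu_n},\Omega)\ge-o(1)$; this contradicts $\ee_4(\mu_n,\Omega)\le-\mu_n^2/(4|\Omega|)\to-\overline\mu_{\overline\alpha}^2/(4|\Omega|)<0$. Hence only the first case occurs, $\ee_4(\overline\mu_{\overline\alpha},\Omega)$ is attained, and the proof is complete.

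The main obstacle is precisely this last dichotomy: making the \cite{NT1}-type concentration analysis rigorous for the ground states $u_{\mu_n}$ and controlling the exponentially small errors in the passage from $\Omega$ to the model sector $\Sigma_{\overline\alpha}$. By contrast, the localized Gagliardo--Nirenberg inequality and the non-existence for $\mu>\overline\mu_{\overline\alpha}$ are comparatively routine; the localization is clean exactly because $\Omega$ is a polygon and therefore agrees with a sector or a half-plane near each boundary point, so that no higher-order boundary corrections enter.
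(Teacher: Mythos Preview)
Your proof is correct, and for the subcritical regime $\mu<\overline\mu_{\overline\alpha}$ it takes a genuinely different and more direct route than the paper. The paper establishes $\ee_4(\mu,\Omega)>-\infty$ for $\mu<\overline\mu_{\overline\alpha}$ (Proposition~\ref{boundbelow}) via an iterated Lions-type concentration--compactness argument: assuming the level is $-\infty$, one rescales, applies Lemma~\ref{CZR} to locate mass, splits off a weak limit on a limiting model domain with strictly positive energy (by Proposition~\ref{prop:EGSsector}), and repeats until the total mass is exhausted; attainment (Corollary~\ref{attained0}) is then deduced by showing that the same contradiction rules out blow-up of minimizing sequences. Your localized Gagliardo--Nirenberg inequality with sharp leading constant $2/\overline\mu_{\overline\alpha}$ bypasses all of this: the partition-of-unity argument, together with the sharp sector constants (which indeed follow from Proposition~\ref{prop:EGSsector} by homogeneity of the GN quotient), gives immediately the coercivity estimate $E_4(u,\Omega)\ge\tfrac12(1-\mu/\overline\mu_{\overline\alpha})\|\nabla u\|_{L^2(\Omega)}^2-C\mu^2/4$, from which both boundedness of the level and compactness of minimizing sequences are automatic for $\mu<\overline\mu_{\overline\alpha}$, and the finiteness of $\ee_4(\overline\mu_{\overline\alpha},\Omega)$ comes for free. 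This is cleaner and exploits precisely the fact---stressed in the paper's introduction---that a polygon coincides locally with its model sector, so no boundary correction appears in the localization. For attainment at the critical mass $\overline\mu_{\overline\alpha}$ your argument is essentially the paper's (Proposition~\ref{prop:limE}): both take subcritical ground states $u_{\mu_n}$, use Lemma~\ref{lem-energyaction} to identify them as action ground states at $\lambda_n\to+\infty$, invoke the concentration estimate \eqref{eq:exp} from Step~2 of Proposition~\ref{asymp}, transplant to $\Sigma_{\overline\alpha}$ with exponentially small error, and contradict $\ee_4(\mu_n,\Sigma_{\overline\alpha})=0$ with the negative upper bound from the constant test function.
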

The proof of Proposition \ref{egs} is based on a detailed compactness analysis.
We begin with two technical lemmas, the first of which is a variant of Lemma 2.18 in \cite{CZR} or Lemma I.1 in \cite{PL2}. Even though both lemmas are rather classical, we provide the proof for the sake of completeness.

In the following, given a polygon $\Omega\subset\R^2$, a sequence of points $(x_n)_n\subset\overline\Omega$, and a sequence of positive numbers $(\delta_n)_n$ such that $\delta_n\to0$ as $n\to\infty$, we will consider the sets
\begin{equation}
\label{eq:On}
\Omega_n: = \frac{\Omega - x_n}{\delta_n} = \left\{y \in \R^2 \mid y = (x - x_n)/\delta_n \text{ for some } x \in \Omega\right\}.
\end{equation}
Observe that, depending on $d(x_n,\partial\Omega)/\delta_n$ being uniformly bounded from above or not, $\Omega_n$ tends to $\Omega_\infty$ as $n\to\infty$, with (up to rotations) $\Omega_\infty$ being an infinite sector $\Sigma_\alpha$, with $\alpha$ equal to $\pi$ or to one of the internal angles of $\Omega$, or the whole plane $\R^2$ (here $d(x_n,\partial\Omega)$ denotes the distance of $x_n$ from the boundary of $\Omega$).

\begin{lemma}
\label{CZR}
Let $\Omegan$ be as in \eqref{eq:On} and let $w_n \in H^1(\Omegan)$ satisfy $\sup_n \|w_n\|_{ H^1(\Omegan)} <+\infty$. If there exists $R>0$ such that 
\[
\liminf_n \max_{y\in \overline\Omega_n} \|w_n\|_{L^2(B_R(y)\cap\Omegan)} = 0, 
\]
then, up to subsequences, 
\[
\liminf_n \|w_n\|_{L^4(\Omegan)} = 0.
\]
\end{lemma}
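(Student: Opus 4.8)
The statement is the standard ``vanishing'' lemma of concentration--compactness type (\`a la Lions), and I would prove it by the usual covering/summation trick. The only genuinely $\Omega$-dependent ingredient is a \emph{uniform} local Gagliardo--Nirenberg inequality on the pieces of $\Omega_n$, and that is the point I expect to need care.

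First I would fix a covering of the plane by balls. Choose a lattice $\{y_i\}_{i\in\Nat}\subset\R^2$ (say the vertices of a square grid of side $R/2$) such that $\R^2=\bigcup_i B_{R/2}(y_i)$ and such that every point of $\R^2$ lies in at most $N$ of the balls $B_{R/2}(y_i)$, with $N$ an absolute constant. For each $n$ and each $i$ with $D_{i,n}:=\Omega_n\cap B_{R/2}(y_i)\neq\emptyset$, the set $D_{i,n}$ is a bounded Lipschitz domain, being the intersection of a disk with the polygon $\Omega_n=(\Omega-x_n)/\delta_n$. The key remark is that, since all the $\Omega_n$ are similar to the fixed polygon $\Omega$ (hence have the same, finitely many, internal angles), the family $\{D_{i,n}\}$ has uniformly bounded Lipschitz character, depending only on $R$ and on $\Omega$; consequently the Gagliardo--Nirenberg inequality \eqref{GN-bounded} with $p=4$ holds on each $D_{i,n}$ with one and the same constant $C=C(R,\Omega)$:
\[
\|w_n\|_{L^4(D_{i,n})}^4\leq C\,\|w_n\|_{L^2(D_{i,n})}^2\,\|w_n\|_{H^1(D_{i,n})}^2 .
\]

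Then I would sum over $i$. Since the balls $B_{R/2}(y_i)$ cover $\Omega_n$ and $|w_n|^4\geq 0$, the left-hand sides add up to at least $\|w_n\|_{L^4(\Omega_n)}^4$, so
\[
\|w_n\|_{L^4(\Omega_n)}^4\leq C\,\Big(\sup_i\|w_n\|_{L^2(D_{i,n})}^2\Big)\sum_i\|w_n\|_{H^1(D_{i,n})}^2 ,
\]
and the bounded-overlap property gives $\sum_i\|w_n\|_{H^1(D_{i,n})}^2\leq N\,\|w_n\|_{H^1(\Omega_n)}^2\leq N\sup_m\|w_m\|_{H^1(\Omega_m)}^2=:M<+\infty$ by hypothesis. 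Finally, for each relevant $i$ I pick a point $z_{i,n}\in\overline{D_{i,n}}\subseteq\overline{\Omega_n}$; since $B_{R/2}(y_i)\subseteq B_R(z_{i,n})$, one has $\|w_n\|_{L^2(D_{i,n})}\leq\|w_n\|_{L^2(B_R(z_{i,n})\cap\Omega_n)}\leq\max_{y\in\overline\Omega_n}\|w_n\|_{L^2(B_R(y)\cap\Omega_n)}$. Combining the last three displays,
\[
\|w_n\|_{L^4(\Omega_n)}^4\leq C\,M\,\Big(\max_{y\in\overline\Omega_n}\|w_n\|_{L^2(B_R(y)\cap\Omega_n)}\Big)^2 .
\]
By assumption the right-hand side tends to $0$ along a subsequence, whence $\liminf_n\|w_n\|_{L^4(\Omega_n)}=0$, which is the claim.

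The main obstacle, as flagged above, is justifying that \eqref{GN-bounded} holds on the pieces $D_{i,n}$ with a constant independent of $n$ and $i$: once $\delta_n$ is small the edges of $\Omega_n$ are long, so a disk of radius $R/2$ meets at most a neighbourhood of one vertex, and up to congruence the $D_{i,n}$ range over finitely many ``model shapes'' (a disk, a disk cut by one line, a disk intersected with a sector whose opening is $\pi$ or one of the angles of $\Omega$), each an extension domain; a uniform $H^1$-extension operator on this family then yields the uniform constant. Everything else is the routine summation argument. (One could instead extend each $w_n$ to $\R^2$ and apply the $\R^2$ Gagliardo--Nirenberg inequality, but in dimension two the extension constant does not behave well under the rescaling $\Omega\mapsto\Omega_n$, so working locally on the Lipschitz pieces is the clean route.)
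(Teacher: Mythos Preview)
Your proof is correct and follows essentially the same covering/summation argument as the paper: both apply a uniform local Gagliardo--Nirenberg inequality on the intersections $\Omega_n\cap B_R(y_j)$, pull out the supremum of the local $L^2$ norms, and use the bounded overlap of the cover to bound the remaining sum by $\|w_n\|_{H^1(\Omega_n)}^2$. Your treatment is in fact slightly tidier in two respects: you use balls of radius $R/2$ and then pick $z_{i,n}\in\overline{D_{i,n}}$ so that $B_{R/2}(y_i)\subset B_R(z_{i,n})$ with $z_{i,n}\in\overline\Omega_n$, which cleanly connects the local $L^2$ norms to the hypothesis; and you spell out why the Gagliardo--Nirenberg constant is uniform over the pieces (finitely many ``model shapes'' once $\delta_n$ is small), whereas the paper simply asserts this uniformity.
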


\begin{proof} Cover $\R^2$ with a family $\{B_R(y_j)\}_{j\in\Nat}$ of balls of radius $R$ in such a way that every point of $\R^2$ belongs to at most $m$ balls, for some constant $m\in\mathbb N$, and, for each $n$, let $J_n\subset \Nat$ be the set of indices $j$ for which $y_j \in \overline \Omega_n$. Note that, since $\Omega$ is a polygon, $R$ is fixed and $\delta_n \le 1$ for every $n$, the best constant in the Gagliardo-Nirenberg inequality \eqref{GN-bounded} with $p=4$ on $\Omega_n \cap B_R(y_j)$ is uniformly bounded in $j$ and $n$ by a constant that we denote by $K_R$. Then we can estimate
\begin{align*}
\|w_n\|_{L^4(\Omegan)}^4 &\le \sum_{j \in J_n} \ \|w_n\|_{L^4(B_R(y_j)\cap\Omegan)}^4 \le
\sum_{j \in J_n} K_R \|w_n\|_{L^2(B_R(y_j)\cap\Omegan)}^2 \|w_n\|_{H^1(B_R(y_j)\cap\Omegan)}^2\\
& \le K_R \max_{y \in \overline \Omega_n}  \|w_n\|_{L^2(B_R(y)\cap\Omegan)}^2 \sum_{j \in J_n} \|w_n\|_{H^1(B_R(y_j)\cap\Omegan)}^2\\
&\le K_R m \|w_n\|_{H^1(\Omegan)}^2 \max_{y \in \overline\Omega_n} \|w_n\|_{L^2(B_R(y)\cap\Omegan)}^2 \\
&\le C K_R m \max_{y \in \overline\Omega_n} \|w_n\|_{L^2(B_R(y)\cap\Omegan)}^2,
\end{align*}
a quantity that, up to subsequences, tends to $0$ as $n \to \infty$ by assumption.
\end{proof}

\begin{remark}
\label{estimate}
The preceding proof shows, in particular, that
\begin{equation}
\label{bound1}
\max_{y \in \overline\Omega_n} \|w_n\|_{L^2(B_R(y)\cap\Omegan)}^2 \ge \frac{\|w_n\|_{L^4(\Omegan)}^4}{ K_R m \|w_n\|_{H^1(\Omega_n)}^2 }\,,
\end{equation}
an estimate that we will use later on.
\end{remark}

\begin{lemma}
\label{BLn}
Let $\Omega_n$ be as in \eqref{eq:On}, $\Omega_\infty$ its corresponding limit set as above, and  $u_n \in  H^1(\Omega_n)$ such that 
\begin{equation}
\label{loc}
\sup_n \|u_n\|_{H^1(\Omega_n)} < +\infty\,. 
\end{equation}
Then there exist a subsequence of $u_n$ (not relabelled) and a function $u \in H^1(\Omega_\infty)$ such that, as $n \to \infty$, $u_n \rightharpoonup u$ weakly in $H^1(K)$ and $u_n\to u$ strongly in $L^p(K)$, for every $p\geq 2$ and every compact subset $K$ of $\Omega_\infty$, and

\begin{align}
&\int_{\Omega_n} |u_n|^p\dx = \int_{\Omega_n} |u_n-u|^p\dx +\int_{\Omega_\infty} |u|^p\dx + o(1) \qquad \forall p\geq2 \label {bl1},\\
&\int_{\Omega_n} |\nabla u_n|^2\dx = \int_{\Omega_n} |\nabla u_n-\nabla u|^2\dx +\int_{\Omega_\infty} |\nabla u|^2\dx + o(1) \label {bl2}.
\end{align}
\end{lemma}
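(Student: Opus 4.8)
The plan is to first produce the limit function $u$ by a routine exhaustion–plus–diagonal argument, and then to derive the two splitting identities \eqref{bl1}--\eqref{bl2} by combining the local (weak $H^1$, strong $L^p$) convergence on compact subsets of $\Omega_\infty$ with \emph{a priori} bounds on the ``collar'' region $\Omega_n\setminus A_j$, where $(A_j)_j$ is a fixed exhaustion of $\Omega_\infty$. \textbf{Step 1 (extraction of $u$).} I would choose an increasing sequence $(A_j)_{j\in\Nat}$ of bounded open subsets of $\Omega_\infty$ with smooth boundary, $\overline{A_j}\subset A_{j+1}$ with $\overline{A_j}$ compact, and $\bigcup_jA_j=\Omega_\infty$. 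Since $\overline{A_j}$ is a compact subset of the open set $\Omega_\infty$, and $\Omega_n\to\Omega_\infty$ in the sense described before the statement (the boundary of $\Omega_n$ consisting of finitely many segments whose endpoints and directions converge to those of $\partial\Omega_\infty$), there is $n_j$ with $\overline{A_j}\subset\Omega_n$ for all $n\ge n_j$. Then \eqref{loc} and the compactness of $H^1(A_j)\hookrightarrow L^p(A_j)$ for every $p\in[2,+\infty)$ in dimension two, together with a diagonal procedure over $j$, yield a subsequence (not relabelled) and a function $u\in H^1_{\mathrm{loc}}(\Omega_\infty)$ with $u_n\rightharpoonup u$ in $H^1(A_j)$ and $u_n\to u$ in $L^p(A_j)$ for every $j$ and every $p\ge2$; since any compact $K\subset\Omega_\infty$ lies in some $A_j$, this gives the stated local convergences. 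Weak lower semicontinuity gives $\|u\|_{H^1(A_j)}\le\sup_n\|u_n\|_{H^1(\Omega_n)}=:C$ for all $j$, and letting $j\to\infty$ by monotone convergence shows $u\in H^1(\Omega_\infty)$. From now on I regard $u$ (and $\nabla u$) as extended by $0$ on $\R^2\setminus\Omega_\infty$, so $u\in L^p(\R^2)$ for all $p\ge2$, and I record that $\sup_n\|u_n\|_{L^p(\Omega_n)}<+\infty$ by \eqref{GN-bounded} applied on $\Omega_n$ with a best constant uniform in $n$ (because $\delta_n\le1$ and $\Omega$ is a polygon, exactly as in the proof of Lemma \ref{CZR}); in particular $M:=\sup_n\|u_n-u\|_{L^p(\Omega_n)}^p<+\infty$.

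\textbf{Step 2 (proof of \eqref{bl1}).} Fix $p\ge2$, $\eps>0$, and use the elementary inequality: there is $C_\eps>0$ with $\big||s+t|^p-|s|^p\big|\le\eps|s|^p+C_\eps|t|^p$ for all $s,t\in\R$. For $n\ge n_j$ I split $\Omega_n=A_j\cup(\Omega_n\setminus A_j)$ and write
\[
\int_{\Omega_n}|u_n|^p\dx-\int_{\Omega_n}|u_n-u|^p\dx-\int_{\Omega_\infty}|u|^p\dx=I_n^{(j)}+II_n^{(j)}-III^{(j)},
\]
with $I_n^{(j)}:=\int_{A_j}\big(|u_n|^p-|u_n-u|^p-|u|^p\big)\dx$, $II_n^{(j)}:=\int_{\Omega_n\setminus A_j}\big(|u_n|^p-|u_n-u|^p\big)\dx$, and $III^{(j)}:=\int_{\Omega_\infty\setminus A_j}|u|^p\dx$. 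Strong $L^p(A_j)$ convergence gives $I_n^{(j)}\to0$ as $n\to\infty$ for each $j$; the elementary inequality with $s=u_n-u$, $t=u$ (recall $u\equiv0$ outside $\Omega_\infty$) gives $|II_n^{(j)}|\le\eps M+C_\eps\,III^{(j)}$. Hence $\limsup_n\big|\int_{\Omega_n}|u_n|^p\dx-\int_{\Omega_n}|u_n-u|^p\dx-\int_{\Omega_\infty}|u|^p\dx\big|\le\eps M+(C_\eps+1)III^{(j)}$ for every $j$; letting $j\to\infty$ (so $III^{(j)}\to0$) and then $\eps\to0$ proves \eqref{bl1}.

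\textbf{Step 3 (proof of \eqref{bl2}).} Expanding $|\nabla u_n|^2=|\nabla u_n-\nabla u|^2+2(\nabla u_n-\nabla u)\cdot\nabla u+|\nabla u|^2$ and integrating over $\Omega_n$ gives
\[
\int_{\Omega_n}|\nabla u_n|^2\dx-\int_{\Omega_n}|\nabla u_n-\nabla u|^2\dx-\int_{\Omega_\infty}|\nabla u|^2\dx=2\int_{\Omega_n}(\nabla u_n-\nabla u)\cdot\nabla u\dx-\int_{\Omega_\infty\setminus\Omega_n}|\nabla u|^2\dx.
\]
For $n\ge n_j$ one has $A_j\subset\Omega_n$, hence $\int_{\Omega_\infty\setminus\Omega_n}|\nabla u|^2\dx\le\int_{\Omega_\infty\setminus A_j}|\nabla u|^2\dx\to0$ as $j\to\infty$. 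For the first term I split again at $A_j$: on $A_j$, since $\nabla u_n\rightharpoonup\nabla u$ in $L^2(A_j)$ and $\nabla u\in L^2(A_j)$ is a fixed test function, $\int_{A_j}(\nabla u_n-\nabla u)\cdot\nabla u\dx\to0$; on $\Omega_n\setminus A_j$, Cauchy--Schwarz and the uniform $H^1$ bound give $\big|\int_{\Omega_n\setminus A_j}(\nabla u_n-\nabla u)\cdot\nabla u\dx\big|\le\big(\sup_n\|\nabla u_n\|_{L^2(\Omega_n)}+\|\nabla u\|_{L^2(\Omega_\infty)}\big)\|\nabla u\|_{L^2(\Omega_\infty\setminus A_j)}$. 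Thus $\limsup_n\big|\int_{\Omega_n}(\nabla u_n-\nabla u)\cdot\nabla u\dx\big|\le C'\|\nabla u\|_{L^2(\Omega_\infty\setminus A_j)}$ for every $j$, and sending $j\to\infty$ this vanishes; combining with the previous estimate yields \eqref{bl2}.

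The main obstacle is not the local compactness, which is entirely standard, but the handling of the collar $\Omega_n\setminus A_j$: there no convergence is available and one must use only the uniform $H^1(\Omega_n)$ and $L^p(\Omega_n)$ bounds together with the fact that $u$ and $\nabla u$ carry vanishingly small mass near infinity in $\Omega_\infty$. The sole geometric input needed for this is a precise statement of $\Omega_n\to\Omega_\infty$ ensuring both $\overline{A_j}\subset\Omega_n$ and $\Omega_\infty\setminus\Omega_n\subset\Omega_\infty\setminus A_j$ for $n\ge n_j$, which follows from the description of $\Omega_\infty$ recalled before the statement.
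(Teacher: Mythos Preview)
Your proof is correct, but it takes a different route from the paper. The paper extends each $u_n$ by zero to $\Omega_\infty$, obtains a.e.\ convergence from the local compactness, and then invokes the Brezis--Lieb Lemma on the fixed domain $\Omega_\infty$ to get \eqref{bl1} in one stroke; for \eqref{bl2} it upgrades the distributional convergence of the (zero-extended) gradients to \emph{global} weak $L^2(\Omega_\infty)$ convergence via density of $C_0^\infty(\Omega_\infty)$, and then a single expansion of the square finishes the argument. By contrast, you avoid citing Brezis--Lieb entirely and work through an exhaustion $(A_j)_j$, controlling the collar $\Omega_n\setminus A_j$ with the elementary inequality $\big||s+t|^p-|s|^p\big|\le\eps|s|^p+C_\eps|t|^p$ and Cauchy--Schwarz. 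Your approach is more self-contained and makes the two-scale structure (local compactness versus tail smallness of $u$) explicit, at the price of being longer; the paper's approach is more concise because it packages the collar analysis inside the Brezis--Lieb Lemma and the global weak convergence of the gradients, but it relies on those external ingredients. Both arguments ultimately hinge on the same geometric fact that compact subsets of $\Omega_\infty$ are eventually contained in $\Omega_n$.
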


\begin{proof} Extend each $u_n$ to $\Omega_\infty$ by defining
\[
\overline u_n(x)  = \begin{cases} u_n(x) & \text{ if } x\in \Omega_n \\ 0  &\text{ if } x\in \Omega_\infty \setminus\Omega_n.\end{cases}
\]
Since $\overline u_n$ is bounded in $L^2(\Omega_\infty)$, (a subsequence of) $\overline u_n$ converges weakly in $L^2(\Omega_\infty)$ to some $u$.
By definition of $\Omega_n$ and $\Omega_\infty$, every compact set $K \subset \Omega_\infty$ is contained in $\Omega_n$ for every $n$ large. Therefore, by \eqref{loc}, $u_n$ converges to $u$ weakly in $H^1(K)$, strongly in $L^p(K)$ for all $p\geq 2$, and a.e. on $\Omega_\infty$, and it follows easily that $u \in H^1(\Omega_\infty)$.

By the Brezis-Lieb Lemma \cite{BL83}, for every $p\geq2$, as $n \to \infty$,
\[
\int_{\Omega_\infty} |\overline u_n |^p\dx = \int_{\Omega_\infty} |\overline u_n -u|^p\dx +\int_{\Omega_\infty} | u |^p\dx +o(1),
\] 
namely
\begin{align*}
\int_{\Omega_n} |u_n |^p\dx &= \int_{\Omega_n} | u_n -u|^p\dx +  \int_{\Omega_\infty\setminus \Omega_n} |u|^p\dx+ \int_{\Omega_\infty} | u |^p\dx +o(1)\\
&= \int_{\Omega_n} | u_n -u|^p\dx +  \int_{\Omega_\infty} | u |^p\dx +o(1),
\end{align*}
since $u$ is fixed and $\Omega_n \to \Omega_\infty$. This proves \eqref{bl1}. To prove  \eqref{bl2}, define
\[
\overline{\nabla u}_n(x)  = \begin{cases} \nabla u_n(x) & \text{ if } x\in \Omega_n \\ 0  &\text{ if } x\in \Omega_\infty \setminus\Omega_n.\end{cases}
\]
and note that $\overline{\nabla u}_n \to \nabla u$ in the sense of distributions in $\Omega_\infty$. Since $C^\infty_0(\Omega_\infty)$ is dense in $L^2(\Omega_\infty)$, actually $\overline{\nabla u}_n \rightharpoonup \nabla u$ weakly in $L^2(\Omega_\infty)$. Then, as $n\to \infty$,
\[
\int_{\Omega_\infty} \left(\overline{\nabla u}_n - \nabla u\right)\cdot \nabla u \dx \to 0
\]
from which we obtain
\begin{align*}
o(1) &= \int_{\Omega_\infty} \left(\overline{\nabla u}_n - \nabla u\right)\cdot \nabla u \dx = \int_{\Omega_n} \left(\nabla u_n - \nabla u\right)\cdot \nabla u \dx -\int_{\Omega_\infty\setminus\Omega_n}  |\nabla u|^2 \dx\\
&= \int_{\Omega_n} \left(\nabla u_n - \nabla u\right)\cdot \nabla u \dx +o(1)
\end{align*}
since, as above, $u$ is fixed and $\Omega_n \to \Omega_\infty$. This shows that 
\[
 \int_{\Omega_n} \nabla u_n \cdot \nabla u \dx =  \int_{\Omega_n} |\nabla u|^2  \dx + o(1)
\]
in turn yielding
\begin{align*}
\int_{\Omega_n} |\nabla u_n -\nabla u|^2\dx &= \int_{\Omega_n} |\nabla u_n|^2dx + \int_{\Omega_n} |\nabla u|^2\dx -2\int_{\Omega_n} \nabla u_n \cdot\nabla u\dx \\
&=  \int_{\Omega_n} |\nabla u_n|^2dx - \int_{\Omega_n} |\nabla u|^2\dx +o(1),
\end{align*}
which is \eqref{bl2} rearranging terms and observing that $\int_{\Omega_n} |\nabla u|^2\dx = \int_{\Omega_\infty} |\nabla u|^2\dx +o(1)$.
\end{proof}

Since from now we only deal with the critical case $p = 4$, we drop the subscript $4$ in symbols like $E_4(u_n,\Omega), \ee_4(\mu,\Omega)$ and so on.

To prove Proposition \ref{egs} we address separately the case $\mu\in(0,\overline\mu_{\overline\alpha})$ and $\mu=\overline\mu_{\overline\alpha}$. The next proposition and its corollary deal with the former regime.
\begin{proposition} 
\label{boundbelow}
Let $\overline\alpha\in(0,\pi)$ be the amplitude of the smallest internal angle of $\Omega$, and $\overline\mu_{\overline\alpha}$ be the corresponding value defined in \eqref{eq:mua}. For every $\mu \in (0,\overline\mu_{\overline\alpha})$ there results
\[
\ee(\mu,\Omega) > -\infty.
\]
\end{proposition}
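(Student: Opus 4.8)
\emph{The plan.} I would argue by contradiction. Suppose $\ee(\mu,\Omega)=-\infty$ for some $\mu\in(0,\overline\mu_{\overline\alpha})$ and pick $(u_n)\subset H_\mu^1(\Omega)$ with $E(u_n,\Omega)\to-\infty$; replacing $u_n$ by $|u_n|$ we may take $u_n\ge0$. From the identity $\tfrac14\|u_n\|_{L^4(\Omega)}^4=\tfrac12\|\nabla u_n\|_{L^2(\Omega)}^2-E(u_n,\Omega)$ the $L^4$-norms blow up; by the Gagliardo--Nirenberg inequality \eqref{GN-bounded} this forces $\|\nabla u_n\|_{L^2(\Omega)}^2\to+\infty$ and keeps the ratio $\|u_n\|_{L^4(\Omega)}^4/\|\nabla u_n\|_{L^2(\Omega)}^2$ bounded, so up to a subsequence it converges to some $\ell\ge2$ (the bound $\ell\ge2$ following from the same identity and $E(u_n,\Omega)<0$). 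Then I rescale at the critical scale: put $\delta_n:=\|\nabla u_n\|_{L^2(\Omega)}^{-1}\to0$ and, for centers $x_n\in\overline\Omega$ to be chosen below,
\[
w_n(y):=\delta_n\,u_n(x_n+\delta_n y)\quad\text{on}\quad\Omega_n:=\frac{\Omega-x_n}{\delta_n}
\]
as in \eqref{eq:On}; then $\|w_n\|_{L^2(\Omega_n)}^2=\mu$, $\|\nabla w_n\|_{L^2(\Omega_n)}^2=1$, $\|w_n\|_{L^4(\Omega_n)}^4\to\ell$, and
\[
E(w_n,\Omega_n)=\frac{E(u_n,\Omega)}{\|\nabla u_n\|_{L^2(\Omega)}^2}=\frac12-\frac14\|w_n\|_{L^4(\Omega_n)}^4<0\quad\text{for }n\text{ large.}
\]

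\emph{Concentration.} Since $(w_n)$ is bounded in $H^1(\Omega_n)$ but $\|w_n\|_{L^4(\Omega_n)}^4\to\ell>0$, the contrapositive of Lemma \ref{CZR} together with the lower bound in Remark \ref{estimate} provides $R>0$ and $\beta>0$ with $\max_{y\in\overline\Omega_n}\|w_n\|_{L^2(B_R(y)\cap\Omega_n)}^2\ge\beta$ for $n$ large; I then choose $x_n$ so that this maximum is essentially achieved at the origin. Passing to a subsequence, $x_n\to x_\infty\in\overline\Omega$ and $d(x_n,\partial\Omega)/\delta_n$ converges in $[0,+\infty]$, hence $\Omega_n\to\Omega_\infty$, where $\Omega_\infty$ is $\R^2$ or a sector $\Sigma_\alpha$ with $\alpha=\pi$ or $\alpha$ an internal angle of $\Omega$; since $\overline\alpha$ is the smallest internal angle and $\overline\alpha<\pi$, one always has $\overline\mu_\alpha\ge\overline\mu_{\overline\alpha}$ (with the convention $\overline\mu_{\R^2}:=\overline\mu$). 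Lemma \ref{BLn} then produces, along a further subsequence, a weak $H^1_{\mathrm{loc}}(\Omega_\infty)$ limit $u$ satisfying the Brezis--Lieb identities \eqref{bl1}--\eqref{bl2}; using the uniform bound on $\|w_n\|_{L^4(\Omega_n)}$ to prevent $L^2$-mass from accumulating against $\partial\Omega_n$, the local strong $L^2$-convergence upgrades the concentration bound to $\mu_\infty:=\|u\|_{L^2(\Omega_\infty)}^2>0$, and $\mu_\infty\le\mu<\overline\mu_{\overline\alpha}\le\overline\mu_\alpha$.

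\emph{Conclusion.} The key point is that on the scale-invariant domain $\Omega_\infty$ the sharp Gagliardo--Nirenberg inequality $\|v\|_{L^4(\Omega_\infty)}^4\le(2/\overline\mu_\alpha)\|v\|_{L^2(\Omega_\infty)}^2\|\nabla v\|_{L^2(\Omega_\infty)}^2$ holds for every $v\in H^1(\Omega_\infty)$ --- for sectors this is exactly the assertion that the critical mass equals $\overline\mu_\alpha$, contained in Proposition \ref{prop:EGSsector}, and for $\R^2$ it is classical --- so that $E(v,\Omega_\infty)\ge\tfrac12\bigl(1-\|v\|_{L^2(\Omega_\infty)}^2/\overline\mu_\alpha\bigr)\|\nabla v\|_{L^2(\Omega_\infty)}^2\ge0$ whenever $\|v\|_{L^2(\Omega_\infty)}^2\le\overline\mu_\alpha$. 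In particular $E(u,\Omega_\infty)\ge0$, and in fact $E(u,\Omega_\infty)>0$, because $u\not\equiv0$ cannot be constant on the unbounded set $\Omega_\infty$, whence $\|\nabla u\|_{L^2(\Omega_\infty)}>0$ and $\mu_\infty<\overline\mu_\alpha$ strictly. I then iterate the concentration step on the residuals $w_n-u$, $w_n-u-u'$, \dots, extracting at stage $j$ a nontrivial bubble $u^{(j)}\in H^1(\Omega_\infty^{(j)})$ --- each $\Omega_\infty^{(j)}$ again $\R^2$ or a sector of angle $\ge\overline\alpha$ --- with $0<\|u^{(j)}\|_{L^2}^2<\overline\mu_{\overline\alpha}$, hence $E(u^{(j)},\Omega_\infty^{(j)})\ge0$. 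Iterating \eqref{bl1} (with $p=2$ and $p=4$) and \eqref{bl2} gives $\sum_j\|u^{(j)}\|_{L^2}^2\le\mu$, $\sum_j\|\nabla u^{(j)}\|^2\le1$, and, setting $c_J:=\ell-\sum_{j\le J}\|u^{(j)}\|_{L^4(\Omega_\infty^{(j)})}^4\ge0$,
\[
\frac12-\frac14\ell=\lim_n E(w_n,\Omega_n)=\sum_{j\le J}E\bigl(u^{(j)},\Omega_\infty^{(j)}\bigr)+\lim_n E\Bigl(w_n-\textstyle\sum_{j\le J}u^{(j)},\Omega_n\Bigr)\ge E\bigl(u^{(1)},\Omega_\infty^{(1)}\bigr)-\frac14 c_J
\]
for every $J$. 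If $c_J\not\to0$ then, by Remark \ref{estimate}, infinitely many bubbles carry mass bounded away from $0$, contradicting $\sum_j\|u^{(j)}\|_{L^2}^2\le\mu<\infty$; hence $c_J\to0$ (this covers also the case in which the iteration stops after finitely many steps), and letting $J\to\infty$ we obtain $\tfrac12-\tfrac14\ell\ge E(u^{(1)},\Omega_\infty^{(1)})>0$, impossible since $\ell\ge2$. This contradiction yields $\ee(\mu,\Omega)>-\infty$.

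\emph{Main obstacle.} The delicate part is running this profile decomposition cleanly: one has to check that successive bubbles sit at \emph{separated} centers, so that the Brezis--Lieb subtraction genuinely removes a bubble at each step, and --- above all --- to handle the fact that the $\Omega_n$ are \emph{bounded} domains expanding to the unbounded $\Omega_\infty$. This is precisely why Lemma \ref{CZR}, Remark \ref{estimate} and the $L^4$-controlled estimates are indispensable: they rule out vanishing and keep $L^2$-mass from escaping through $\partial\Omega_n$, which is what makes the Brezis--Lieb splitting on the moving domains and the mass/gradient bookkeeping rigorous. The conceptual crux is that every bubble lives on a scale-invariant limit domain with critical mass $\ge\overline\mu_{\overline\alpha}$, so the strict subcriticality of $\mu$ is inherited by each bubble and forces nonnegative bubble energies.
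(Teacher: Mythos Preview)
Your proposal is correct and follows essentially the same approach as the paper: both argue by contradiction, rescale by $\delta_n=\|\nabla u_n\|_{L^2(\Omega)}^{-1}$, and run a profile decomposition using Lemma~\ref{CZR}/Remark~\ref{estimate} to locate bubbles and Lemma~\ref{BLn} for the Brezis--Lieb splitting, with the crux being that each bubble lands on $\R^2$ or a sector of angle $\ge\overline\alpha$ and carries mass $<\overline\mu_{\overline\alpha}$, hence strictly positive energy. The only cosmetic difference is the endgame: the paper observes that each bubble carries $L^2$-mass at least a fixed $\beta_1>0$ so the iteration terminates in finitely many steps with a sign contradiction, whereas you pass to the limit $J\to\infty$ via $c_J\to0$ and conclude $\tfrac12-\tfrac14\ell>0$ against $\ell\ge2$; these are equivalent.
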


\begin{proof} Assume by contradiction that $\ee(\mu,\Omega) = -\infty$ and let $(u_n)_n \subset H^1_\mu(\Omega)$ satisfy
$E(u_n,\Omega) \to -\infty$ as $n\to \infty$, so that $\|u_n\|_{L^4(\Omega)} \to \infty$ and, by \eqref{GN-bounded},
$\|\nabla u_n\|_{L^2(\Omega)} \to \infty$, since $\|u_n\|_{L^2(\Omega)}^2 = \mu$ for every $n$.

Set $\delta_n := \|\nabla u_n\|_{L^2(\Omega)}^{-1} \to 0$ and define $v_n(x) := \delta_n u_n(\delta_n x)$ for every $x\in\Omega/\delta_n$. By definition, the functions $v_n$ satisfy
\begin{align*}
&\|v_n\|_{L^2(\Omega/\delta_n)}^2 = \|u_n\|_{L^2(\Omega)}^2 = \mu \\
&\|\nabla v_n\|_{L^2(\Omega/\delta_n)}^2 = \delta_n^2 \|\nabla u_n\|_{L^2(\Omega)}^2 = 1 \\
& \|v_n\|_{L^4(\Omega/\delta_n)}^4 =  \delta_n^2 \|u_n\|_{L^4(\Omega)}^4,
\end{align*}
so that, for every $n$ large,
\begin{equation}
\label{Eneg}
E(v_n,\Omega/\delta_n) = \delta_n^2 E(u_n, \Omega) \le 0.
\end{equation}
We assume for simplicity that the preceding inequality holds for every $n$. Note that this implies that
\begin{equation}
\label{nega}
1 = \|\nabla v_n\|_{L^2(\Omega/\delta_n)}^2 \le \frac12  \| v_n\|_{L^4(\Omega/\delta_n)}^4 
\end{equation}
for every $n$, namely, the $L^4$ norm of $v_n$ is bounded away from zero. Lemma \ref{CZR} (and Remark \ref{estimate}) applied to $v_n$ (with $x_n =0$) shows that there exist $R>0$ and a sequence $y_n^1\in \overline\Omega/\delta_n$ such that 
\[
\liminf_n \|v_n\|_{L^2(B_R(y_n^1) \cap \Omega/\delta_n)}^2 \ge \liminf_n \frac{ \|v_n\|_{L^4(\Omega/\delta_n)}^4}{K_R m\|v_n\|_{H^1(\Omega/\delta_n)}^2}
\ge\frac{2}{K_R m(1+\mu)} =: \beta_0>0,
\]
since $\|v_n\|_{H^1(\Omega/\delta_n)}^2 = 1+\mu$ and by \eqref{nega}.

Set $v^1_n(x) = v_n(x+y_n^1)$. This function is defined in $\Omega^1_n := \Omega/\delta_n - y_n=(\Omega-x_n^1)/\delta_n$ with $x_n^1:=\delta_n y_n^1$, and satisfies
\[
\|\nabla v^1_n \|_{L^2(\Omega_n^1)} = \|\nabla v_n \|_{L^2(\Omega/\delta_n)}, \quad \|v^1_n \|_{L^2(\Omega_n^1)} = \|v_n \|_{L^2(\Omega/\delta_n)}, \quad
\|v^1_n \|_{L^4(\Omega_n^1)} = \|v_n \|_{L^4(\Omega/\delta_n)}.
\]
Moreover,
\begin{equation}
\label{nozero}
\liminf_n \|v^1_n\|_{L^2(B_R(0) \cap \Omega_n^1)}^2 \ge \beta_0.
\end{equation}
Furthermore, as pointed out before Lemma \ref{CZR}, the sets $\Omega_n^1$ tend to a set $\Omega_\infty^1$ which is either $\R^2$ or a sector $\Sigma_\alpha$ with $\alpha\geq\overline\alpha$, depending on the distance of the points $y_n^1$ from $\partial(\Omega/\delta_n)$.
Hence, by Lemma \ref{BLn} there exists $v^1\in H^1(\Omega_\infty^1)$ such that, as $n\to \infty$,
\begin{equation}
\label{BL1}
E(v_n^1,\Omega_n^1) = E(v_n^1 -v^1,\Omega_n^1) + E(v^1,\Omega_\infty^1) +o(1).
\end{equation}
Notice that $v^1$ does not vanish identically, because $ \|v^1\|_{L^2(B_R(0) \cap \Omega_\infty^1)}^2 \ge \beta_0$ by strong convergence on compact sets, and that  $E(v^1,\Omega_\infty^1) >0$ by Proposition \ref{prop:EGSsector}, because $v^1\not\equiv 0$ and $\|v^1\|_{L^2(\Omega_\infty^1)}^2 \le \mu < \overline\mu_{\overline\alpha}$. Moreover, again  by Lemma \ref{BLn},
\begin{equation}
\mu =   \|v_n^1\|_{L^2(\Omega^1_n)}^2 =  \|v_n^1 -v^1\|_{L^2(\Omega^1_n)}^2  +  \|v^1\|_{L^2(\Omega_\infty^1)}^2 +o(1) \ge  \|v_n^1 -v^1\|_{L^2(\Omega^1_n)}^2  + \beta_0+o(1). \label{mass0}
\end{equation}
Now,  by \eqref{Eneg} and \eqref{BL1},
\begin{align}
0 &\ge E(v_n,\Omega/\delta_n) = E(v_n^1,\Omega_n^1) =\nonumber \\
&= \frac12 \|\nabla v_n^1 -\nabla v^1 \|_{L^2(\Omega_n^1)}^2  -  \frac14 \| v_n^1 - v^1 \|_{L^4(\Omega_n^1)}^4 + E(v^1,\Omega_\infty^1) + o(1) \nonumber \\
&\ge -  \frac14 \| v_n^1 - v^1 \|_{L^4(\Omega_n^1)}^4   + E(v^1,\Omega_\infty^1) + o(1). \label{split0}
\end{align}
If $\displaystyle\liminf_n  \|v_n^1 -v^1\|_{L^4(\Omega_n^1)}^4< 4  E(v^1,\Omega_\infty^1)$, we reach a contradiction, and the proposition is proved. Otherwise,
\[
\liminf_n \|v_n^1 -v^1\|_{L^4(\Omega_n^1)}^4 \ge 4 E(v^1,\Omega_\infty^1) >0.
\]
Lemma \ref{CZR} applied to $v_n^1 -v^1$ shows that there exist $y_n^2\in \Omega_n^1$ such that 
\begin{equation}
\label{massbound}
\liminf_n \|v_n^1 -v^1\|_{L^2(B_{R}(y_n^2) \cap \Omega_n^1)}^2 \ge \frac{2  E(v^1,\Omega_\infty^1)}{K_{R} m(1+\mu)} =:\beta_1.
\end{equation}
Defining $v_n^2(x) = v_n^1(x+y_n^2)- v^1(x+y_n^2)$ on $\Omega_n^2 :=\Omega_n^1 -y_n^2=(\Omega-x_n^2)/\delta_n$, with $x_n^2=x_n^1+\delta_n y_n^2$, and working exactly like we did above for $v^1_n$ we see that there exists a nonzero $v^2$ defined on the limit $\Omega^2_\infty$ of $\Omega_n^2$ such that, as $n\to\infty$,
\begin{equation}
\label{BL2}
E(v_n^2,\Omega_n^2) = E(v_n^2 -v^2,\Omega_n^2) + E(v^2,\Omega_\infty^2) +o(1) 
\end{equation}
and
\[
 \|v_n^2\|_{L^2(\Omega^2_n)}^2 =  \|v_n^2 -v^2\|_{L^2(\Omega^2_n)}^2  +  \|v^2\|_{L^2(\Omega_\infty^2)}^2 +o(1).
\]
Therefore we can update \eqref{mass0}  and \eqref{split0}  by writing
\[
\mu \ge  \|v_n^1 -v^1\|_{L^2(\Omega^1_n)}^2  + \beta_0 +o(1) =   \|v_n^2\|_{L^2(\Omega^2_n)}^2+ \beta_0 +o(1) \ge 
  \|v_n^2 -v^2\|_{L^2(\Omega^2_n)}^2 +  \beta_1 + \beta_0 +o(1)
\]
and
\begin{align*}
0 &\ge E(v^1_n,\Omega^1_n) = E(v_n^1 -v^1,\Omega_n^1) + E(v^1,\Omega_\infty^1) +o(1) = E(v_n^2,\Omega_n^2) + E(v^1,\Omega_\infty^1) +o(1) \\
& =  E(v_n^2 -v^2,\Omega_n^2) + E(v^2,\Omega_\infty^2)  + E(v^1,\Omega_\infty^1) +o(1) \\
&\ge  -\frac14  \| v_n^2 - v^2 \|_{L^4(\Omega_n^2)}^4 + E(v^2,\Omega_\infty^2)  + E(v^1,\Omega_\infty^1) +o(1),
\end{align*}
where the term $ E(v^2,\Omega_\infty^2)$ is positive for the same reasons why $ E(v^1,\Omega_\infty^1)$ was positive. If $\displaystyle\liminf_n  \|v_n^2 -v^2\|_{L^4(\Omega_n^1)}^4<  4  E(v^2,\Omega_\infty^2)+ 4  E(v^1,\Omega_\infty^1)$, we again reach a contradiction. Otherwise 
\[
\liminf_n  \|v_n^2 -v^2\|_{L^4(\Omega_n^1)}^4 \ge 4 E(v^2,\Omega_\infty^2)+ 4 E(v^1,\Omega_\infty^1) > 4 E(v^1,\Omega_\infty^1)
\]
and we proceed with $v_n^2 -v^2$ exactly as we did for $v_n^1 -v^1$. At each step the preceding bound applies to $v_n^k-v^k$  and  \eqref{massbound}  yields
\[
\liminf_n \|v_n^k -v^k\|_{L^2(B_{R_k}(y_n^k) \cap \Omega_n^k)}^2 \ge \beta_1.
\]
Therefore, after $k$ steps we obtain
\[
\mu \ge  \|v_n^{k+1} -v^{k+1}\|_{L^2(\Omega^{k+1}_n)}^2  +  k\beta_1 + \beta_0 +o(1) 
\]
so the iterations must stop at some $k$, meaning that 
\[
\liminf_n \|v_n^k -v^k\|_{L^4(\Omega^{k+1}_n)}^4 < 4 E(v^k,\Omega^k_\infty) + \dots + 4 E(v^1,\Omega^1_\infty)
\]
and reaching a contradiction as described above.
\end{proof}

\begin{corollary}
\label{attained0}
For every $\mu \in (0,\overline\mu_{\overline\alpha})$ there exists $u \in H^1_\mu(\Omega)$ such that $E(u,\Omega) =  \ee(\mu,\Omega)$.
\end{corollary}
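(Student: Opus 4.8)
The plan is to run the direct method of the calculus of variations. By Proposition~\ref{boundbelow} the level $\ee(\mu,\Omega)$ is finite for every $\mu\in(0,\overline\mu_{\overline\alpha})$, and it is moreover negative, since testing $E(\cdot,\Omega)$ with the constant function of mass $\mu$ (admissible precisely because of the Neumann boundary conditions) yields $\ee(\mu,\Omega)\le-\tfrac{\mu^2}{4|\Omega|}<0$. So I would fix such a $\mu$, pick a minimizing sequence $(u_n)_n\subset H^1_\mu(\Omega)$ with $E(u_n,\Omega)\to\ee(\mu,\Omega)$, and aim to prove it is precompact in $H^1(\Omega)$.

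The heart of the matter is the boundedness of $(u_n)_n$ in $H^1(\Omega)$, which at the $L^2$-critical power is not for free. Suppose it fails; then, since $\|u_n\|_{L^2(\Omega)}^2=\mu$ and \eqref{GN-bounded} would otherwise control $\|u_n\|_{H^1(\Omega)}$, up to subsequences $\|\nabla u_n\|_{L^2(\Omega)}\to+\infty$. Rescaling exactly as at the start of the proof of Proposition~\ref{boundbelow}, with $\delta_n:=\|\nabla u_n\|_{L^2(\Omega)}^{-1}\to0$ and $v_n(x):=\delta_n u_n(\delta_n x)$ on $\Omega/\delta_n$, one gets $\|v_n\|_{L^2(\Omega/\delta_n)}^2=\mu$, $\|\nabla v_n\|_{L^2(\Omega/\delta_n)}^2=1$ and $E(v_n,\Omega/\delta_n)=\delta_n^2 E(u_n,\Omega)\le 0$ for $n$ large (here $\ee(\mu,\Omega)<0$ is used), hence $\|v_n\|_{L^4(\Omega/\delta_n)}^4\ge 2$. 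This is verbatim the situation of \eqref{Eneg}--\eqref{nega}, so the iterative concentration-compactness scheme in the proof of Proposition~\ref{boundbelow} applies word for word: via Lemma~\ref{CZR}, Remark~\ref{estimate} and Lemma~\ref{BLn} one extracts at each step a nontrivial bubble $v^k\in H^1(\Omega^k_\infty)$ on a limit domain $\Omega^k_\infty$ that is either $\R^2$ or a sector $\Sigma_{\alpha_k}$ with $\alpha_k\ge\overline\alpha$; since $\|v^k\|_{L^2(\Omega^k_\infty)}^2\le\mu<\overline\mu_{\overline\alpha}\le\overline\mu_{\alpha_k}$, Proposition~\ref{prop:EGSsector} (together with the classical description of the problem on $\R^2$) forces $E(v^k,\Omega^k_\infty)>0$, and a fixed positive amount of $L^2$ mass is peeled off at every step, so the iteration must terminate, producing the same contradiction as in Proposition~\ref{boundbelow}. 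Thus $(u_n)_n$ is bounded in $H^1(\Omega)$.

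Once boundedness is secured, the conclusion is routine: up to subsequences $u_n\rightharpoonup u$ in $H^1(\Omega)$, and since $\Omega$ is a bounded Lipschitz domain, the embedding $H^1(\Omega)\hookrightarrow L^q(\Omega)$ is compact for every $q\in[2,\infty)$, whence $u_n\to u$ strongly in $L^2(\Omega)$ and in $L^4(\Omega)$. Therefore $\|u\|_{L^2(\Omega)}^2=\mu$, i.e. $u\in H^1_\mu(\Omega)$, and combining the weak lower semicontinuity of $u\mapsto\|\nabla u\|_{L^2(\Omega)}$ with the strong $L^4$ convergence gives $E(u,\Omega)\le\liminf_n E(u_n,\Omega)=\ee(\mu,\Omega)$; since the reverse inequality is trivial, $u$ is an energy ground state with mass $\mu$.

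I expect the only real obstacle to be the boundedness step. Being exactly at the mass-critical exponent $p=4$, there is no soft coercivity estimate for $E(\cdot,\Omega)$ on $H^1_\mu(\Omega)$, and one genuinely has to recycle the full bubbling analysis of Proposition~\ref{boundbelow}, which works here only because $\mu<\overline\mu_{\overline\alpha}$ is strictly subcritical and because the sharp structure of the $L^2$-critical ground state problem on sectors (Proposition~\ref{prop:EGSsector})---no bubble can have nonpositive energy---is available. After that, the passage to the limit uses nothing beyond the compactness of the Sobolev embedding on the bounded set $\Omega$.
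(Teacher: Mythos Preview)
Your proof is correct and follows essentially the same route as the paper's: you note that $\ee(\mu,\Omega)$ is finite and strictly negative, argue that an unbounded minimizing sequence would, after the same rescaling, reproduce exactly the setting of Proposition~\ref{boundbelow} and thus lead to a contradiction, and then conclude by compact Sobolev embeddings on the bounded set $\Omega$. The only difference is cosmetic---you spell out the concentration-compactness iteration and the passage to the limit in more detail than the paper does.
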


\begin{proof} First notice that by Proposition \ref{boundbelow}
\begin{equation}
\label{negneg}
-\infty < \ee(\mu,\Omega) \le E\left(\sqrt{\mu/|\Omega|},\Omega\right) < 0.
\end{equation}
Let $(u_n)_n \in H^1_\mu(\Omega)$ be a minimizing sequence for $\ee(\mu,\Omega)$ and assume by contradiction that $\|\nabla u_n\|_{L^2(\Omega)} \to \infty$. Then $E(u_n,\Omega) \le  0$ (for $n$ large) implies  $\|u_n\|_{L^4(\Omega)} \to \infty$. This  feature is enough to repeat the proof of Proposition \ref{boundbelow} (the assumption $\ee(\mu,\Omega) = -\infty$ is used there only to deduce that the $L^4$ part of $E$ tends to $+\infty$). The argument of Proposition \ref{boundbelow} leads then to $\ee(\mu,\Omega) \ge0$, contradicting \eqref{negneg}. Therefore the sequence $(u_n)_n$ is bounded in $H^1(\Omega)$ and by the boundedness of $\Omega$ the thesis follows from standard compactness arguments.
\end{proof}

To complete the proof of Proposition \ref{egs} we are left to consider the case $\mu=\overline\mu_{\overline\alpha}$. Note that the only difference with respect to the argument in the proof of Proposition \ref{boundbelow} lies in the fact that the energy of the weak limit of a minimizing sequence can be equal to zero. However, by Proposition \ref{prop:EGSsector} this can occur if and only if the weak limit is a ground state for $E$ on $\Sigma_{\overline\alpha}$ with mass $\overline\mu_{\overline\alpha}$. To rule out this possible loss of compactness, we now exploit ground states with masses in a left neighbourhood of $\overline\mu_{\overline\alpha}$ (that we already know to exist by Corollary \ref{attained0} above) and we show that, up to subsequences, they converge to a ground state with the critical mass.
\begin{proposition}
\label{prop:limE}
There results
\begin{equation}
\label{eq:Emucrit}
\ee(\overline\mu_{\overline\alpha}, \Omega) > -\infty
\end{equation}
and it is attained.
\end{proposition}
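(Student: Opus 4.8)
The plan is to establish both statements simultaneously, by proving that a sequence of energy ground states whose masses approach $\overline\mu_{\overline\alpha}$ from below is precompact in $H^1(\Omega)$ and converges to an energy ground state with the critical mass. Fix $\mu_n\uparrow\overline\mu_{\overline\alpha}$ and, using Corollary \ref{attained0}, pick $u_n\in H^1_{\mu_n}(\Omega)$ with $E(u_n,\Omega)=\ee(\mu_n,\Omega)$. Testing with the constant function $\sqrt{\mu_n/|\Omega|}$ gives $\ee(\mu_n,\Omega)\le-\mu_n^2/(4|\Omega|)$, so there is $c_0>0$ such that
\[
E(u_n,\Omega)=\ee(\mu_n,\Omega)\le-c_0<0\qquad\text{for all }n\text{ large},
\]
since $\mu_n\to\overline\mu_{\overline\alpha}>0$.

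The main obstacle is the $H^1(\Omega)$-boundedness of $(u_n)_n$; as $\|u_n\|_{L^2(\Omega)}^2=\mu_n$ is bounded, this amounts to bounding $\|\nabla u_n\|_{L^2(\Omega)}$, for which the Gagliardo--Nirenberg inequality \eqref{GN-bounded} is useless precisely because $\mu_n$ sits at the critical threshold. I would argue by contradiction: if $\|\nabla u_n\|_{L^2(\Omega)}\to+\infty$ along a subsequence, then $E(u_n,\Omega)\le-c_0$ forces $\|u_n\|_{L^4(\Omega)}^4=2\|\nabla u_n\|_{L^2(\Omega)}^2-4\ee(\mu_n,\Omega)\to+\infty$; moreover, by Lemma \ref{lem-energyaction} each $u_n$ is an action ground state at frequency $\lambda_n:=\lambda_{u_n}$ as in \eqref{eq-lambda}, and
\[
\lambda_n=\frac{\|u_n\|_{L^4(\Omega)}^4-\|\nabla u_n\|_{L^2(\Omega)}^2}{\mu_n}\ge\frac{\|\nabla u_n\|_{L^2(\Omega)}^2+4c_0}{\mu_n}\longrightarrow+\infty.
\]
Now Proposition \ref{asymp}(i) gives $\JJ_4(\lambda_n,\Omega)=\JJ_4(\lambda_n,\Sigma_{\overline\alpha})+o(1)$, while Proposition \ref{prop:AGSsector} and the identity $\JJ_4(\lambda,\R^2)=\lambda\overline\mu/2$ yield $\JJ_4(\lambda_n,\Sigma_{\overline\alpha})=\tfrac{\overline\alpha}{2\pi}\JJ_4(\lambda_n,\R^2)=\tfrac{\lambda_n\overline\mu_{\overline\alpha}}{2}$, so $\JJ_4(\lambda_n,\Omega)=\tfrac{\lambda_n\overline\mu_{\overline\alpha}}{2}+o(1)$. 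Since $J_4(u,\lambda,\Omega)=E(u,\Omega)+\tfrac\lambda2\|u\|_{L^2(\Omega)}^2$ and $J_4(u_n,\lambda_n,\Omega)=\JJ_4(\lambda_n,\Omega)$,
\[
\ee(\mu_n,\Omega)=E(u_n,\Omega)=\JJ_4(\lambda_n,\Omega)-\frac{\lambda_n}{2}\mu_n=\frac{\lambda_n}{2}\big(\overline\mu_{\overline\alpha}-\mu_n\big)+o(1),
\]
which is $\ge o(1)$ because $\lambda_n>0$ and $\mu_n<\overline\mu_{\overline\alpha}$; in particular $\ee(\mu_n,\Omega)>-c_0$ for $n$ large, a contradiction. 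Hence $(u_n)_n$ is bounded in $H^1(\Omega)$.

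The conclusion is then routine. As $\Omega$ is a bounded Lipschitz domain, Rellich's theorem gives, along a subsequence, $u_n\rightharpoonup u$ in $H^1(\Omega)$ and $u_n\to u$ in $L^q(\Omega)$ for every $q\in[2,\infty)$; hence $\|u\|_{L^2(\Omega)}^2=\lim_n\mu_n=\overline\mu_{\overline\alpha}$, so $u\in H^1_{\overline\mu_{\overline\alpha}}(\Omega)$, and by weak lower semicontinuity of the Dirichlet term together with the strong $L^4$-convergence, $\ee(\overline\mu_{\overline\alpha},\Omega)\le E(u,\Omega)\le\liminf_n\ee(\mu_n,\Omega)$. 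Conversely, for every $w\in H^1_{\overline\mu_{\overline\alpha}}(\Omega)$ the functions $\sqrt{\mu_n/\overline\mu_{\overline\alpha}}\,w\in H^1_{\mu_n}(\Omega)$ have energy converging to $E(w,\Omega)$, so $\limsup_n\ee(\mu_n,\Omega)\le E(w,\Omega)$, and taking the infimum over $w$ gives $\limsup_n\ee(\mu_n,\Omega)\le\ee(\overline\mu_{\overline\alpha},\Omega)$. Combining,
\[
\ee(\overline\mu_{\overline\alpha},\Omega)\le E(u,\Omega)\le\liminf_n\ee(\mu_n,\Omega)\le\limsup_n\ee(\mu_n,\Omega)\le\ee(\overline\mu_{\overline\alpha},\Omega),
\]
so all these quantities coincide. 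In particular $\ee(\overline\mu_{\overline\alpha},\Omega)=E(u,\Omega)\in\R$, which gives \eqref{eq:Emucrit}, and $u$ is an energy ground state with mass $\overline\mu_{\overline\alpha}$, so the level is attained.

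I expect the boundedness step to be the crux: it is exactly here that the polygonal structure enters, through the exponentially small gap between $\JJ_4(\lambda,\Omega)$ and $\JJ_4(\lambda,\Sigma_{\overline\alpha})$ proved in Proposition \ref{asymp} (which in turn rests on $\partial\Omega$ coinciding with $\partial\Sigma_{\overline\alpha}$ near the sharpest corner), forcing any putative blow-up to carry non-negative energy in the limit and thereby clashing with the strictly negative value of $\ee(\mu_n,\Omega)$.
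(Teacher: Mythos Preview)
Your argument is correct and follows the same overall strategy as the paper: take energy ground states $u_n$ with masses $\mu_n\uparrow\overline\mu_{\overline\alpha}$, rule out blow-up by exploiting that each $u_n$ is an action ground state at a frequency $\lambda_n\to+\infty$ together with the asymptotics of $\JJ_4(\lambda,\Omega)$, and then pass to the limit by compactness. The implementation, however, differs in two respects. First, in the contradiction step you invoke Proposition~\ref{asymp}\emph{(i)} directly and use the algebraic identity $E(u_n,\Omega)=\JJ_4(\lambda_n,\Omega)-\tfrac{\lambda_n}{2}\mu_n=\tfrac{\lambda_n}{2}(\overline\mu_{\overline\alpha}-\mu_n)+o(1)\ge o(1)$, whereas the paper re-runs the transplant construction of Step~2 in Proposition~\ref{asymp} to produce a competitor $v_n$ on $\Sigma_{\overline\alpha}$ with $E(v_n,\Sigma_{\overline\alpha})\le E(u_n,\Omega)+o(1)$ and mass below $\overline\mu_{\overline\alpha}$, contradicting Proposition~\ref{prop:EGSsector}. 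Second, you obtain \eqref{eq:Emucrit} and the attainment simultaneously from the $H^1$-boundedness of $(u_n)$ and the sandwich $\ee(\overline\mu_{\overline\alpha},\Omega)\le E(u,\Omega)\le\liminf_n\ee(\mu_n,\Omega)\le\limsup_n\ee(\mu_n,\Omega)\le\ee(\overline\mu_{\overline\alpha},\Omega)$; the paper instead first proves $\lim_{\mu\to\overline\mu_{\overline\alpha}^-}\ee(\mu,\Omega)>-\infty$, then uses a separate density/perturbation argument to deduce $\ee(\overline\mu_{\overline\alpha},\Omega)>-\infty$, and finally proves attainment. Your route is a bit more streamlined, but both rest on exactly the same input from Proposition~\ref{asymp}.
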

\begin{proof}
Observe first that the validity of \eqref{eq:Emucrit} is granted if one already knows that 
\begin{equation}
\label{eq:limE}
\lim_{\mu\to\overline\mu_{\overline\alpha}^-}\ee(\mu,\Omega)>-\infty\,.
\end{equation}
Indeed, suppose \eqref{eq:limE} holds but, by contradiction, $\ee(\overline\mu_{\overline\alpha}, \Omega) = -\infty$. Recalling that standard arguments (see e.g. the proof of Lemma 2.1 in \cite{DST-23}) ensure that $\ee(\mu,\Omega)$ is a strictly decreasing, concave function of $\mu$ on $[0,\overline\mu_{\overline\alpha})$, thus guaranteeing that $\displaystyle\lim_{\mu\to{\overline\mu_{\overline\alpha}}^-}\ee(\mu,\Omega)=\inf_{\mu\in(0,\overline\mu_{\overline\alpha})}\ee(\mu,\Omega)$ is well-defined, denote by $c$ such number. By density, there would then exist a nonnegative function $u \in C^\infty(\Omega)$ (the restriction to $\Omega$ of a $C^\infty_0(\R^2)$ function) such that $\|u\|_{L^2(\Omega)}^2=\overline\mu_{\overline\alpha}$ and $E(u, \Omega) \le c -2$. Let then $B \subset\subset \Omega$ be a ball where $u >0$ and let $\varphi \in C_0^\infty(\Omega)$ be positive in  $B$. By continuity, for every $\eps>0$ small,  $E(u-\eps\varphi, \Omega) \le c-1$. Moreover, if $\eps$ is small enough,
\[
0 \le u(x) -\eps\varphi(x) \le u(x) \qquad \forall x \in B
\]
so that $\|u-\eps\varphi \|_{L^2(\Omega)}^2 =: \mu < \overline\mu_{\overline\alpha}$, that in turn would yield
\[
c \le \ee(\mu,\Omega) \le E(u-\eps\varphi,\Omega) \le c-1,
\]
a contradiction.

Let us thus prove \eqref{eq:limE}. Since we already observed that the desired limit is well-defined, take $\mu_n\to{\overline\mu_{\overline\alpha}}^-$ and $u_n\in H_{\mu_n}^1(\Omega)$ be such that $E(u_n,\Omega)=\ee(\mu_n,\Omega)$ for every $n$ by Corollary \ref{attained0}, and assume by contradiction that $E(u_n,\Omega)\to-\infty$ as $n\to\infty$. Hence, $\|u_n\|_{L^4(\Omega)}\to\infty$ by the form of $E$ and $\|\nabla u_n\|_{L^2(\Omega)}\to\infty$ by \eqref{GN-bounded}. Moreover, since $u_n$ is a ground state of $E$ in $H_{\mu_n}^1(\Omega)$, with no loss of generality we can take it to be a positive solution of
\begin{equation}
	\label{NLSun}
	\begin{cases}
	-\Delta u_n+\lambda_n u_n=u_n^{3} & \text{on }\Omega\\
	\frac{\partial u_n}{\partial\nu}=0 & \text{on }\partial\Omega
	\end{cases}
\end{equation}
with
\[
\lambda_n=\frac{\|u_n\|_{L^4(\Omega)}^4-\|\nabla u_n\|_{L^2(\Omega)}^2}{\mu_n}=\frac12\frac{\|u_n\|_{L^4(\Omega)}^4}{\mu_n}-\frac2\mu\ee(\mu_n,\Omega)\to\infty\,.
\]
Moreover, $u_n$ is an action ground state of $J_4(\cdot,\lambda_n,\Omega)$ in $\NN_4(\lambda_n,\Omega)$ for every $n$ by Lemma \ref{lem-energyaction}. Hence, repeating the argument in Step 2 of the proof of Proposition \ref{asymp} above, for every $n$ we can construct a function $v_n\in H^1(\Sigma_{\overline\alpha})$ such that
\[
\begin{split}
&\,\|v_n\|_{L^2(\Sigma_{\overline\alpha})}\leq\|u_n\|_{L^2(\Omega)}^2=\mu_n<\overline\mu_{\overline\alpha}\\
&\,\|\nabla v_n\|_{L^2(\Sigma_{\overline\alpha})}\leq\|\nabla u_n\|_{L^2(\Omega)}\\
&\,\|v_n\|_{L^4(\Sigma_{\overline\alpha})}^4=\|u_n\|_{L^4(\Omega)}^4+o(1)
\end{split}
\] 
as $n\to\infty$. By Proposition \ref{prop:EGSsector}, for $n$ large enough this entails
\[
0=\ee(\mu_n,\Sigma_{\overline\alpha})\leq E(v_n,\Sigma_{\overline\alpha})\leq E(u_n,\Omega)+o(1)\,,
\]
contradicting $E(u_n,\Omega)\to-\infty$ and thus proving \eqref{eq:limE}.

Finally, to show that $\ee(\overline\mu_{\overline\alpha},\Omega)$ is attained, observe that, arguing analogously as in the first part of the proof, it coincides with $\displaystyle\lim_{\mu\to\overline\mu_{\overline\alpha}^-}\ee(\mu,\Omega)$. Hence, any sequence of energy ground states $u_n$ with mass $\mu_n\to\overline\mu_{\overline\alpha}^-$ is such that $E(u_n,\Omega)\to\ee(\overline\mu_{\overline\alpha},\Omega)$. Arguing as before shows that the corresponding frequency $\lambda_n$ is uniformly bounded from above, in turn ensuring the uniform boundedness of $u_n$ in $H^1(\Omega)$. Since $\Omega$ is bounded, the compactness of Sobolev embeddings allows to conclude that $u_n$ converges to a limit $u\in H_{\overline\mu_{\overline\alpha}}^1(\Omega)$ such that $E(u,\Omega)=\ee(\overline\mu_{\overline\alpha},\Omega)$.
\end{proof}

\begin{proof}[Proof of Proposition \ref{egs}]
	It is the content of Corollary \ref{attained0} and Proposition \ref{prop:limE}.
\end{proof}

\begin{remark}
 \label{rem-lambdapos}
 Since the previous results show that $\ee_p(\mu,\Omega)<0$ for every value of $p$ and $\mu$, it follows that the quantity $\lambda_u$ defined by \eqref{eq-lambda} is positive for every normalized ground state $u$, because
 \[
 \lambda_u=\left(1-\frac2p\right)\frac{\|u\|_{L^p(\Omega)}^p}\mu-\frac{2\ee_p(\mu,\Omega)}\mu>0
 \]
 whenever $u$ is a ground state of $E_p$ in $H_\mu^1(\Omega)$.
\end{remark}

\section{Proof of Theorem \ref{thm-main}}
\label{sec-proof}

In this section we prove Theorem \ref{thm-main}. Preliminarily, for every $p\in(2,4)$, we introduce the functions $\Lambda_p^-,\,\Lambda_p^+:(0,+\infty)\to\R$ given by
\begin{gather*}
\Lambda_p^-(\mu) := \inf\left\{\lambda_u : u  \in H^1_\mu(\Omega),\:E_p(u,\Omega)=\ee_p(\mu,\Omega) \right\}\\
\Lambda_p^+(\mu) := \sup\left\{\lambda_u : u  \in H^1_\mu(\Omega),\:E_p(u,\Omega)=\ee_p(\mu,\Omega) \right\},
\end{gather*}
with $\lambda_u$ as in \eqref{eq-lambda}. Let also
\[
W_p := \left\{\mu >0 : \Lambda_p^-(\mu) \ne \Lambda_p^+(\mu)\right\}.
\]
We collect the main properties of $\Lambda_p^\pm$ in the following lemma.

\begin{lemma}
\label{Lambda}
Let $p\in (2,4)$. Then:
\begin{enumerate}[label=(\roman*)]
\item $\Lambda_p^-(\mu),\,\Lambda_p^+(\mu)$ are attained and positive for every $\mu>0$;

\smallskip
\item if $0< \mu_1 <\mu_2$, then $\Lambda_p^+(\mu_1) < \Lambda_p^-(\mu_2)$ (in particular, $\Lambda_p^-$ and  $\Lambda_p^+$ are strictly increasing);

\smallskip
\item $W_p$ is at most countable, and the functions $\Lambda_p^-,\,\Lambda_p^+$ coincide and are continuous on $(0,+\infty) \setminus W_p$;

\smallskip
\item $\displaystyle \lim_{\mu \to +\infty} \Lambda_p^-(\mu) = +\infty\qquad\text{and}\qquad \lim_{\mu\to0^+} \Lambda_p^+(\mu) = 0$.
\end{enumerate}

\end{lemma}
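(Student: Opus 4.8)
The plan is to establish items (i)--(iv) in order, relying on two ingredients: a compactness statement for energy ground states on the bounded domain $\Omega$ at (nearly) prescribed mass, produced by the $L^2$-subcritical Gagliardo--Nirenberg inequality \eqref{GN-bounded}, and Lemma~\ref{lem-energyaction} coupled with the concavity of $\mu\mapsto\ee_p(\mu,\Omega)$. Concretely, I would first prove the following: if $(u_n)_n$ are energy ground states with $\|u_n\|_{L^2(\Omega)}^2=\mu_n\to\mu_0>0$, then $(u_n)_n$ is bounded in $H^1(\Omega)$ and, up to subsequences, $u_n\to u$ strongly in $H^1(\Omega)$, with $u$ an energy ground state of mass $\mu_0$ and $\lambda_{u_n}\to\lambda_u$. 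Boundedness is where $p<4$ enters: $E_p(u_n,\Omega)=\ee_p(\mu_n,\Omega)$ stays bounded (continuity of the concave, finite function $\ee_p(\cdot,\Omega)$) and $\|u_n\|_{L^2(\Omega)}^2$ is bounded, so \eqref{GN-bounded} gives $\|u_n\|_{L^p(\Omega)}^p\lesssim 1+\|\nabla u_n\|_{L^2(\Omega)}^{p-2}$ and hence $\tfrac12\|\nabla u_n\|_{L^2(\Omega)}^2\le\tfrac1p\|u_n\|_{L^p(\Omega)}^p+|\ee_p(\mu_n,\Omega)|\lesssim 1+\|\nabla u_n\|_{L^2(\Omega)}^{p-2}$ with $p-2<2$, which bounds $\|\nabla u_n\|_{L^2(\Omega)}$; the compact embedding $H^1(\Omega)\hookrightarrow L^q(\Omega)$, weak lower semicontinuity of the Dirichlet norm and the minimality of $u_n$ then upgrade weak to strong convergence and identify the limit. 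This immediately gives item (i): applied at a single mass $\mu$, the same Gagliardo--Nirenberg estimate shows every energy ground state of mass $\mu$ has $\|\nabla u\|_{L^2(\Omega)}$ controlled by $\mu$ alone, so $\lambda_u\le\|u\|_{L^p(\Omega)}^p/\mu$ is bounded and $\Lambda_p^+(\mu)<+\infty$; choosing ground states $u_n$ of mass $\mu$ with $\lambda_{u_n}\to\Lambda_p^\pm(\mu)$ and invoking the compactness fact produces a ground state realizing $\Lambda_p^\pm(\mu)$, while positivity is exactly Remark~\ref{rem-lambdapos}.

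For item (ii) I would first get the non-strict inequality $\Lambda_p^+(\mu_1)\le\Lambda_p^-(\mu_2)$ and then exclude equality. For a ground state $u$ of mass $\mu$, the curve $s\mapsto E_p\bigl(\sqrt{s/\mu}\,u,\Omega\bigr)$ dominates $\ee_p(\cdot,\Omega)$, touches it at $s=\mu$, and has derivative $-\tfrac12\lambda_u$ there; hence $\ee_{p,+}'(\mu,\Omega)\le-\tfrac12\lambda_u\le\ee_{p,-}'(\mu,\Omega)$ for every ground state $u$, so $\ee_{p,+}'(\mu,\Omega)\le-\tfrac12\Lambda_p^+(\mu)$ and $\ee_{p,-}'(\mu,\Omega)\ge-\tfrac12\Lambda_p^-(\mu)$. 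Since $\ee_p(\cdot,\Omega)$ is concave (standard, by the argument recalled in the proof of Proposition~\ref{prop:limE}), one has $\ee_{p,+}'(\mu_1,\Omega)\ge\ee_{p,-}'(\mu_2,\Omega)$ for $\mu_1<\mu_2$, and chaining the three inequalities yields $\Lambda_p^+(\mu_1)\le\Lambda_p^-(\mu_2)$. If equality held, by item (i) there would be energy ground states $u_1,u_2$ of masses $\mu_1\ne\mu_2$ with $\lambda_{u_1}=\lambda_{u_2}=:\lambda$; by Lemma~\ref{lem-energyaction}, $u_1$ is an action ground state at frequency $\lambda$, and every action ground state at frequency $\lambda$ --- in particular $u_2$, which by the same lemma is an action ground state at its own frequency $\lambda_{u_2}=\lambda$ --- is an energy ground state with mass exactly $\mu_1$, contradicting $\|u_2\|_{L^2(\Omega)}^2=\mu_2$. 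This gives the strict inequality, and the strict monotonicity of $\Lambda_p^\pm$ is the same statement with $u_1,u_2$ arbitrary ground states.

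Items (iii) and (iv) then follow. For (iii): when $\mu\in W_p$ the interval $\bigl(\Lambda_p^-(\mu),\Lambda_p^+(\mu)\bigr)$ is nonempty, and by item (ii) these intervals are pairwise disjoint as $\mu$ varies, so $W_p$ is at most countable and $\Lambda_p^-\equiv\Lambda_p^+$ off $W_p$ by definition; for continuity I would compute the one-sided limits of $\Lambda_p^\pm$ at an arbitrary $\mu_0>0$ by taking $\mu_k\to\mu_0^-$ and ground states $u_k$ of mass $\mu_k$ with $\lambda_{u_k}=\Lambda_p^-(\mu_k)$: the compactness fact gives $u_k\to u$ in $H^1(\Omega)$ with $u$ a ground state of mass $\mu_0$, and since $\Lambda_p^-$ is increasing, $\Lambda_p^-(\mu_k)\to\lim_{\mu\to\mu_0^-}\Lambda_p^-(\mu)\le\Lambda_p^-(\mu_0)$ while $\lambda_{u_k}\to\lambda_u\ge\Lambda_p^-(\mu_0)$, whence $\lim_{\mu\to\mu_0^-}\Lambda_p^-(\mu)=\Lambda_p^-(\mu_0)$; arguing likewise for $\Lambda_p^+$ from the left and for both from the right shows that the left and right limits of both functions at $\mu_0$ equal $\Lambda_p^-(\mu_0)$ and $\Lambda_p^+(\mu_0)$ respectively, which coincide exactly when $\mu_0\notin W_p$. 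For (iv) I would use the identity $\lambda_u=(1-\tfrac2p)\|u\|_{L^p(\Omega)}^p/\mu-2\ee_p(\mu,\Omega)/\mu$ of Remark~\ref{rem-lambdapos}: testing with $\sqrt\mu\,v$ for a fixed $v\in H^1_1(\Omega)$ gives $\ee_p(\mu,\Omega)/\mu\to-\infty$, so $\Lambda_p^-(\mu)\ge-2\ee_p(\mu,\Omega)/\mu\to+\infty$; testing with constants gives $\ee_p(\mu,\Omega)\le-c\mu^{p/2}$, while \eqref{GN-bounded} gives $\ee_p(\mu,\Omega)\ge-C\mu^{p/2}$ for small $\mu$, and a Poincar\'e--Wirtinger splitting of a ground state into its mean and a mean-free part forces $\|\nabla u\|_{L^2(\Omega)}^2\lesssim\mu^{p/2}$, hence $\|u\|_{L^p(\Omega)}^p\lesssim\mu^{p/2}$, so that $\Lambda_p^+(\mu)\lesssim\mu^{p/2-1}\to0$ as $\mu\to0^+$.

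I expect the two delicate points to be the uniform $H^1$-bound on energy ground states, which rests squarely on strict subcriticality $p<4$ in \eqref{GN-bounded} (the argument collapses at $p=4$, consistently with Proposition~\ref{egs}), and the passage from non-strict to strict monotonicity in item (ii): this is precisely where Lemma~\ref{lem-energyaction} --- the identification of energy and action ground states --- is indispensable, and it is this strictness that produces the disjointness of the intervals in item (iii) and underpins the non-uniqueness mechanism of the paper.
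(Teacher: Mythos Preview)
Your proposal is correct. For items (i)--(iii) you are essentially filling in the arguments the paper delegates to external references (Proposition~\ref{agsprop} and \cite{DST-20}): the compactness statement for subcritical ground states, the concavity/tangent-line computation $\ee_{p,+}'(\mu)\le-\tfrac12\Lambda_p^+(\mu)$, $\ee_{p,-}'(\mu)\ge-\tfrac12\Lambda_p^-(\mu)$, the use of Lemma~\ref{lem-energyaction} to upgrade the non-strict monotonicity to the strict one, and the disjoint-open-intervals argument for countability are the standard ones and match the cited sources.

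The only genuine divergence is the second limit in item (iv). The paper argues via the action: since $0\le J_p(u_\mu,\Lambda_p^+(\mu),\Omega)=\ee_p(\mu,\Omega)+\tfrac\mu2\Lambda_p^+(\mu)=o(1)$ as $\mu\to0^+$, one gets $\JJ_p(\Lambda_p^+(\mu),\Omega)\to0$, and strict monotonicity of $\lambda\mapsto\JJ_p(\lambda,\Omega)$ (Proposition~\ref{agsprop}(ii)) forces $\Lambda_p^+(\mu)\to0$. Your route avoids the action functional and is more quantitative: from \eqref{GN-bounded} and $E_p(u)\le0$ one obtains $\tfrac12\|\nabla u\|_{L^2}^2\le C\mu(\mu+\|\nabla u\|_{L^2}^2)^{(p-2)/2}$, and the case $\|\nabla u\|_{L^2}^2\ge\mu$ self-destructs for small $\mu$ since it would force $\|\nabla u\|_{L^2}^2\le C\mu^{2/(4-p)}<\mu$; hence $\|\nabla u\|_{L^2}^2\le\mu$, then $\|u\|_{L^p}^p\lesssim\mu^{p/2}$, and $\Lambda_p^+(\mu)\le\|u\|_{L^p}^p/\mu\lesssim\mu^{p/2-1}\to0$. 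This is self-contained and gives an explicit rate, whereas the paper's approach reuses the action machinery already in place. One small remark: the Poincar\'e--Wirtinger splitting you mention is not really what drives the bound; the dichotomy argument above via \eqref{GN-bounded} is cleaner and sufficient.
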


\begin{proof}
To prove item \emph{(i)} it is enough to combine the arguments in the proof of Proposition \ref{agsprop}\emph{(i)} with Remark \ref{rem-lambdapos}. Moreover,
items \emph{(ii)} and \emph{(iii)} follow by \cite[Lemma 4.1 and Theorem 2.5]{DST-20} (the proofs therein are in the context of metric graphs, but they apply unchanged to the present setting).

Let us then prove item \emph{(iv)}. Since, for every fixed $\mu>0$, the constant function $\frac{\sqrt\mu}{|\Omega|}\in H_\mu^1(\Omega)$, then
\[
\ee_p(\mu,\Omega) \le E_p(\tfrac{\sqrt\mu}{|\Omega|},\Omega) = -\frac1p \frac{\mu^{\frac p2}}{|\Omega|^{\frac p2 -1}}\,.
\]
Now, let $u_\mu$ be an energy ground state with mass $\mu$ such that $\lambda_{u_\mu} = \Lambda_p^-(\mu)$. Then, by the preceding inequality and \eqref{eq-lambda},
\[
\Lambda_p^-(\mu)= \frac{\|u_\mu\|_{L^p(\Omega)}^p - \|\nabla u_\mu\|_{L^2(\Omega)}^2}{\mu}
= \left(1-\frac2p\right) \|u_\mu\|_{L^p(\Omega)}^p - 2\frac{\ee_p(\mu,\Omega)}{\mu} \ge \frac2p\left(\frac{\mu}{|\Omega|}\right)^{\frac p2-1},
\]
which proves the first limit in {\em (iv)}.

To establish the second one, let $u_\mu$ be an energy ground state with mass $\mu$ such that $\lambda_{u_\mu} = \Lambda_p^+(\mu)$. Observe that, since $\ee_p(0,\Omega) = 0$ and $\ee_p(\cdot,\Omega)$ is negative and concave on $(0,+\infty)$ (this can be seen, e.g., as in \cite[Theorem 3.1]{AST-16}), it holds $\displaystyle\lim_{\mu\to0^+}\ee_p(\mu,\Omega)=0$, so that $\ee_p(\cdot,\Omega)$ is continuous on $[0,+\infty)$. Then,
\[
0\le J_p(u_\mu,\Lambda_p^+(\mu), \Omega) = E_p(u_\mu, \Omega) + \frac\mu2 \Lambda_p^+(\mu) = \ee_p(\mu, \Omega) + \frac\mu2 \Lambda_p^+(\mu) =o(1)\quad\text{as}\quad\mu\to0^+
\]
(recall that $\Lambda_p^+(\mu)$ is bounded for small $\mu$ by \emph{(i)-(ii)}), so that
\[
o(1) = J_p(u_\mu, \Lambda_p^+(\mu),\Omega) \ge \JJ_p(\Lambda_p^+(\mu),\Omega)\geq0\quad\text{as}\quad\mu\to0^+,
\]
in turn showing that $\displaystyle\lim_{\mu\to0^+}\JJ_p(\Lambda_p^+(\mu),\Omega)=0$. The proof of item {\em(iv)} is then completed suitably using Proposition \ref{agsprop}\emph{(ii)}.
\end{proof}

The previous lemma has also the immediate consequence of allowing an alternative way to write the thesis of Theorem \ref{thm-main}.

\begin{corollary}
\label{W0}
Let $p\in (2,4)$. If $W_p\neq\emptyset$, then there exists $\mu_p>0$ and $u_1,\,u_2\in H_{\mu_p}^1(\Omega)$ such that
\[
u_1\neq u_2\qquad\text{and}\qquad E_p(u_1,\Omega)=E_p(u_2,\Omega)=\ee_p(\mu_p,\Omega)\,.
\]
In particular, $\lambda_{u_1} = \Lambda_p^-(\mu_p) \ne \Lambda_p^+(\mu_p) = \lambda_{u_2}$.
\end{corollary}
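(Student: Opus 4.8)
The plan is a direct unpacking of the definitions introduced just above. First I would use the hypothesis $W_p\neq\emptyset$ to fix a mass $\mu_p\in W_p$; by the very definition of $W_p$ this means $\Lambda_p^-(\mu_p)\neq\Lambda_p^+(\mu_p)$. Next I would appeal to Lemma \ref{Lambda}\emph{(i)}, which asserts that the infimum and the supremum defining $\Lambda_p^-(\mu_p)$ and $\Lambda_p^+(\mu_p)$ are attained. Hence there exist energy ground states $u_1,u_2\in H_{\mu_p}^1(\Omega)$, i.e. $E_p(u_1,\Omega)=E_p(u_2,\Omega)=\ee_p(\mu_p,\Omega)$, such that $\lambda_{u_1}=\Lambda_p^-(\mu_p)$ and $\lambda_{u_2}=\Lambda_p^+(\mu_p)$.

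It then remains only to check that $u_1\neq u_2$. Since the quantity $\lambda_u$ in \eqref{eq-lambda} depends on $u$ alone, the equality $u_1=u_2$ would force $\lambda_{u_1}=\lambda_{u_2}$, contradicting $\Lambda_p^-(\mu_p)\neq\Lambda_p^+(\mu_p)$. This simultaneously yields the final assertion of the statement, namely the identification $\lambda_{u_1}=\Lambda_p^-(\mu_p)$ and $\lambda_{u_2}=\Lambda_p^+(\mu_p)$ of the two Lagrange multipliers.

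There is no genuine obstacle in this argument: the corollary is purely a bookkeeping consequence of the definitions of $\Lambda_p^\pm$ and $W_p$ together with the attainment part of Lemma \ref{Lambda}. All the substance is deferred to the proof of Theorem \ref{thm-main} itself, where one must show that $W_p$ is in fact nonempty for $p$ sufficiently close to $4$ — this is where Proposition \ref{egs}, Lemma \ref{lem-energyaction}, and the failure of monotonicity of the mass of action ground states in $\lambda$ at $p=4$ enter the picture.
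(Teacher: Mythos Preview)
Your argument is correct and is precisely what the paper intends: the corollary is stated there as an immediate consequence of Lemma \ref{Lambda} (no separate proof is given), and your unpacking of the definitions of $W_p$ and $\Lambda_p^\pm$ together with the attainment in Lemma \ref{Lambda}\emph{(i)} is exactly that consequence.
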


We can then state the last auxiliary result of the paper, which presents a sufficient condition for $W_p$ to be not empty.

\begin{proposition}
\label{Wne0}
Assume that, for some $\lambda_1,\,\lambda_2>0$ such that $\lambda_1 < \lambda_2$, there results $M_4^-(\lambda_1) > M_4^+(\lambda_2)$ (with $M^{\pm}_4(\cdot)$ as in Section \ref{sec-actionGS}). Then, there exists $\eps>0$ such that $W_p \ne \emptyset$ for every $p \in (4-\eps,4)$.
\end{proposition}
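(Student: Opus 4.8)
First I would transfer the hypothesis from the critical power $p=4$ to nearby subcritical powers via Lemma \ref{semic}, and then close the argument by contradiction using the monotonicity of $\Lambda_p^\pm$ (Lemma \ref{Lambda}) together with the action/energy correspondence of Lemma \ref{lem-energyaction}. For the transfer: since $\lambda_1,\lambda_2>0$, Proposition \ref{agsprop}(i) ensures that $M_4^-(\lambda_1)$ and $M_4^+(\lambda_2)$ are attained, hence finite, so that $d:=M_4^-(\lambda_1)-M_4^+(\lambda_2)>0$. Applying \eqref{Minf} at $\lambda_1$ and \eqref{Msup} at $\lambda_2$, both with base power $4$, yields $\liminf_{q\to 4}M_q^-(\lambda_1)\ge M_4^-(\lambda_1)$ and $\limsup_{q\to 4}M_q^+(\lambda_2)\le M_4^+(\lambda_2)$, whence there is $\eps>0$ such that $M_p^-(\lambda_1)>M_4^-(\lambda_1)-d/2=M_4^+(\lambda_2)+d/2>M_p^+(\lambda_2)$ for every $p\in(4-\eps,4)$. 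It then remains to prove the following: if $p\in(2,4)$ and $M_p^-(\lambda_1)>M_p^+(\lambda_2)$ for some $0<\lambda_1<\lambda_2$, then $W_p\ne\emptyset$.

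To prove this implication I would argue by contradiction, assuming $W_p=\emptyset$. Then by Lemma \ref{Lambda}(ii)--(iv) the common function $\Lambda_p:=\Lambda_p^-=\Lambda_p^+$ is continuous and strictly increasing on $(0,+\infty)$, with $\Lambda_p(\mu)\to 0$ as $\mu\to0^+$ and $\Lambda_p(\mu)\to+\infty$ as $\mu\to+\infty$; by the intermediate value theorem it is therefore a bijection of $(0,+\infty)$ onto itself. Let $u_1\in\NN_p(\lambda_1,\Omega)$ and $u_2\in\NN_p(\lambda_2,\Omega)$ be action ground states attaining $m_1:=M_p^-(\lambda_1)$ and $m_2:=M_p^+(\lambda_2)$ (they exist by Proposition \ref{agsprop}(i)), so $m_1>m_2$. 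For $i=1,2$, set $\mu_i:=\Lambda_p^{-1}(\lambda_i)$ and let $w_i\in H_{\mu_i}^1(\Omega)$ be an energy ground state with mass $\mu_i$ (one exists since $p<4$, by \eqref{GN-bounded}); because $W_p=\emptyset$, its frequency satisfies $\lambda_{w_i}=\Lambda_p(\mu_i)=\lambda_i$. Now $u_i$ is an action ground state at frequency $\lambda_i=\lambda_{w_i}$, so the second assertion of Lemma \ref{lem-energyaction} (applied with $w_i$ playing the role of the energy ground state) shows that $u_i$ is itself an energy ground state with the same mass as $w_i$, i.e. $m_i=\mu_i$, equivalently $\lambda_i=\Lambda_p(m_i)$. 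But then $m_1>m_2$ and the strict monotonicity of $\Lambda_p$ force $\lambda_1=\Lambda_p(m_1)>\Lambda_p(m_2)=\lambda_2$, contradicting $\lambda_1<\lambda_2$. Hence $W_p\ne\emptyset$, which together with the first paragraph completes the proof.

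I expect the core difficulty to lie in the correct use of the converse implication contained in Lemma \ref{lem-energyaction}: an action ground state at frequency $\lambda$ is an energy ground state only once one knows that $\lambda$ is the Lagrange multiplier of some energy ground state. It is precisely to guarantee this for $\lambda=\lambda_1$ and $\lambda=\lambda_2$ that one must know $\Lambda_p$ is \emph{onto} $(0,+\infty)$ --- a fact that, under $W_p=\emptyset$, follows from Lemma \ref{Lambda}(iv) and the intermediate value theorem. By contrast, the transfer step is routine; the only care required there is that the two semicontinuity bounds in Lemma \ref{semic} are one-sided in exactly the directions that keep a gap open between $M_p^-(\lambda_1)$ and $M_p^+(\lambda_2)$ for $p$ close to $4$.
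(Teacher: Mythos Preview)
Your proof is correct and follows essentially the same route as the paper's: transfer the mass inequality from $p=4$ to nearby $p$ via Lemma~\ref{semic}, assume $W_p=\emptyset$ so that $\Lambda_p$ is a continuous, strictly increasing bijection of $(0,+\infty)$ onto itself, and derive $\lambda_1>\lambda_2$ from $M_p^-(\lambda_1)>M_p^+(\lambda_2)$ via Lemma~\ref{lem-energyaction}. The only cosmetic difference is that the paper starts from energy ground states $u_i$ with $\lambda_{u_i}=\lambda_i$ and reads off the mass inequality directly, whereas you start from action ground states realizing $M_p^\mp(\lambda_i)$ and then invoke the converse clause of Lemma~\ref{lem-energyaction} through the auxiliary $w_i$; the logical content is identical.
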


\begin{proof}
By Lemma \ref{semic}, the existence of $0<\lambda_1<\lambda_2$ for which $M_4^-(\lambda_1) > M_4^+(\lambda_2)$ implies that there exists $\eps>0$ such that, for every $p \in (4-\eps,4)$,
\begin{equation}
\label{M1M2}
M_p^-(\lambda_1) > M_p^+(\lambda_2).
\end{equation}
Fix then $p \in (4-\eps,4)$ and assume by contradiction that $W_p = \emptyset$. By Lemma \ref{Lambda}\emph{(ii)-(iii)}, this entails that $\Lambda_p^-(\mu) = \Lambda_p^+(\mu)=:\lambda(\mu)$ for every $\mu \in (0,+\infty)$, and that $\lambda(\cdot)$ is a continuous and strictly increasing function. Moreover, by Lemma \ref{Lambda}\emph{(iv)}, $\lambda$ is also surjective onto $(0,+\infty)$.

Now, let $u_1$ and $u_2$ be energy ground states with masses $\mu_1,\mu_2$, respectively, such that $\lambda_{u_1} = \lambda_1$ and  $\lambda_{u_2} = \lambda_2$. Since they are also action ground states at frequencies $\lambda_1,\,\lambda_2$ by Lemma \ref{lem-energyaction}, then \eqref{M1M2} yields $\mu_1 >\mu_2$, that together with the strict monotonicity of $\lambda(\cdot)$ entails $\lambda_1 > \lambda_2$, providing the contradiction we seek.
\end{proof}

We can now complete the proof of the main theorem of the paper.

\begin{proof}[Proof of Theorem \ref{thm-main}]
In view of Proposition \ref{Wne0} and Corollary \ref{W0}, it suffices to prove that there exist $0<\lambda_1<\lambda_2$ such that $M_4^-(\lambda_1) > M_4^+(\lambda_2)$.

As in the previous section, denote by $\overline\alpha$ the amplitude of the smallest internal angle of $\Omega$ and let $\overline\mu_{\overline\alpha}$ be the corresponding critical mass as in \eqref{eq:mua}.
Owing to Proposition \ref{egs}, let $\overline u\in H_{\overline\mu_{\overline\alpha}}^1(\Omega)$ be such that $E_4(\overline{u},\Omega)=\ee_4(\overline\mu_{\overline\alpha},\Omega)<0$. By Lemma \ref{lem-energyaction}, this is also an action ground state at frequency $\overline \lambda := \lambda_{\overline u}$ and $M_4^-(\overline\lambda) = M_4^+(\overline \lambda) = \overline\mu_{\overline\alpha}$. Hence, to conclude it is enough to find some $\widetilde \lambda \in(\overline \lambda,+\infty) \setminus Z_4$ (with $Z_4$ defined by Proposition \ref{agsprop}\emph{(iv)}) such that $M_4^-(\widetilde\lambda) = M_4^+(\widetilde \lambda) = : \widetilde\mu \ne \overline\mu_{\overline\alpha}$. Indeed, if $\widetilde \mu < \overline\mu_{\overline\alpha}$, we set $\lambda_1 = \overline \lambda$ and $\lambda_2 = \widetilde \lambda$, while, if $\widetilde \mu > \overline\mu_{\overline\alpha}$, we set $\lambda_1 = \widetilde \lambda$ and take $\lambda_2 \gg \lambda_1$ in such a way that $M_4^+(\lambda_2) <  \widetilde \mu$ (note that this last step is consistent since, by Proposition \ref{asymp}\emph{(ii)}, $M_4^\pm(\lambda)  \to \overline\mu_{\overline\alpha}$ as $\lambda \to +\infty$).

We are then left to show that there exists $\widetilde \lambda\in(\overline \lambda,+\infty) \setminus Z_4$ such that $M_4^-(\widetilde\lambda) = M_4^+(\widetilde \lambda) \ne \overline\mu_{\overline\alpha}$. Assume by contradiction that $M_4^-(\lambda) = M_4^+ (\lambda) =  \overline\mu_{\overline\alpha}$ for every $\lambda \in (\overline \lambda, +\infty) \setminus Z_4$. By Proposition \ref{agsprop}\emph{(iv)}, $\JJ_4(\cdot,\Omega)$ is differentiable on $(\overline \lambda, +\infty) \setminus Z_4$ and $\JJ_4'(\lambda,\Omega) = \frac{\overline\mu_{\overline\alpha}}{2}$ for every $\lambda \in (\overline \lambda, +\infty) \setminus Z_4$. Now, recalling the properties of $\overline u$, Proposition \ref{agsprop}\emph{(ii)} and Proposition \ref{prop:AGSsector}, one finds that
\begin{align*}
\JJ_4(\lambda,\Omega) = \JJ_4(\overline\lambda,\Omega) &\,+ \int_{\overline\lambda}^\lambda \JJ_4'(s,\Omega)\,ds =  \JJ_4(\overline\lambda,\Omega) + \frac{\overline\mu_{\overline\alpha}}{2}(\lambda - \overline\lambda)\\[.2cm]
&\,= E_4(\overline u, \Omega) + \JJ_4(\lambda,\Sigma_{\overline\alpha}) = \ee_4(\overline\mu_{\overline\alpha},\Omega) + \JJ_{4}(\lambda,\Sigma_{\overline\alpha}),
\end{align*}
for every $\lambda \in (\overline \lambda, +\infty) \setminus Z_4$. Hence,
\[
0 > \ee_4(\mud,\Omega) = \JJ_4(\lambda,\Omega) - \JJ_4(\lambda,\Sigma_{\overline\alpha})\qquad\forall \lambda \in (\overline \lambda, +\infty) \setminus Z_4,
\]
and, taking the limit as $\lambda \to +\infty$, we obtain a contradiction with Proposition \ref{asymp}\emph{(i)}.
\end{proof}

%%%%%%%%%%%%%%%%%%%%%%%%%%%%%%%%%%%%%%%%%%%%%%%%%%%%%%%%%%%%%%%%%%%%%%%%%%%%%%%%%%%%%%%%%%%%%%%%%%%%%%%%%%%%%%%%%%%%%%%%%%%%%%%%%%%%%%%%%%%%%%%%%%%%%%%%%%%%%%%%%%%%%%%%%%%%%%%%%%%%%%%%%%%%%%%%%%%%%%%%%%%%%%%%%%%%%%%%%%%%%%%%%%%%
%%%%%%%%%%%%%%%%%%%%%%%%%%%%%%%%%%%%%%%%%%%%%%%%%%%%%%%%%%%%%%%%%%%%%%%%%%%%%%%%%%%%%%%%%%%%%%%%%%%%%%%%%%%%%%%%%%%%%%%%%%%%%%%%%%%%%%%%%%%%%%%%%%%%%%%%%%%%%%%%%%%%%%%%%%%%%%%%%%%%%%%%%%%%%%%%%%%%%%%%%%%%%%%%%%%%%%%%%%%%%%%%%%%%
%%%%%%%%%%%%%%%%%%%%%%%%%%%%%%%%%%%%%%%%%%%%%%%%%%%%%%%%%%%%%%%%%%%%%%%%%%%%%%%%%%%%%%%%%%%%%%%%%%%%%%%%%%%%%%%%%%%%%%%%%%%%%%%%%%%%%%%%%%%%%%%%%%%%%%%%%%%%%%%%%%%%%%%%%%%%%%%%%%%%%%%%%%%%%%%%%%%%%%%%%%%%%%%%%%%%%%%%%%%%%%%%%%%%
%%%%%%%%%%%%%%%%%%%%%%%%%%%%%%%%%%%%%%%%%%%%%%%%%%%%%%%%%%%%%%%%%%%%%%%%%%%%%%%%%%%%%%%%%%%%%%%%%%%%%%%%%%%%%%%%%%%%%%%%%%%%%%%%%%%%%%%%%%%%%%%%%%%%%%%%%%%%%%%%%%%%%%%%%%%%%%%%%%%%%%%%%%%%%%%%%%%%%%%%%%%%%%%%%%%%%%%%%%%%%%%%%%%%

\section*{Statements and Declarations}
\noindent\textbf{Conflict of interest}  The authors declare that they have  no conflict of interest.

\bigskip

\noindent\textbf{Acknowledgements.} S.D. and L.T. acknowledge that this study was carried out within the project E53D23005450006 ``Nonlinear dispersive equations in presence of singularities'' - funded by European Union - Next Generation EU within the PRIN 2022 program (D.D. 104 - 02/02/2022 Ministero dell'Universit\`a e della Ricerca). This manuscript reflects only the author's views and opinions and the Ministry cannot be considered responsible for them.

%%%%%%%%%%%%%%%%%%%%%%%%%%%%%%%%%%%%%%%%%%%%%%%%%%%%%%%%%%%%%%%%%%%%%%%%%%%%%%%%%%%%%%%%%%%%%%%%%%%%%%%%%%%%%%%%%%%%%%%%%%%%%%%%%%%%%%%%%%%%%%%%%%%%%%%%%%%%%%%%%%%%%%%%%%%%%%%%%%%%%%%%%%%%%%%%%%%%%%%%%%%%%%%%%%%%%%%%%%%%%%%%%%%%
%%%%%%%%%%%%%%%%%%%%%%%%%%%%%%%%%%%%%%%%%%%%%%%%%%%%%%%%%%%%%%%%%%%%%%%%%%%%%%%%%%%%%%%%%%%%%%%%%%%%%%%%%%%%%%%%%%%%%%%%%%%%%%%%%%%%%%%%%%%%%%%%%%%%%%%%%%%%%%%%%%%%%%%%%%%%%%%%%%%%%%%%%%%%%%%%%%%%%%%%%%%%%%%%%%%%%%%%%%%%%%%%%%%%
%%%%%%%%%%%%%%%%%%%%%%%%%%%%%%%%%%%%%%%%%%%%%%%%%%%%%%%%%%%%%%%%%%%%%%%%%%%%%%%%%%%%%%%%%%%%%%%%%%%%%%%%%%%%%%%%%%%%%%%%%%%%%%%%%%%%%%%%%%%%%%%%%%%%%%%%%%%%%%%%%%%%%%%%%%%%%%%%%%%%%%%%%%%%%%%%%%%%%%%%%%%%%%%%%%%%%%%%%%%%%%%%%%%%
%%%%%%%%%%%%%%%%%%%%%%%%%%%%%%%%%%%%%%%%%%%%%%%%%%%%%%%%%%%%%%%%%%%%%%%%%%%%%%%%%%%%%%%%%%%%%%%%%%%%%%%%%%%%%%%%%%%%%%%%%%%%%%%%%%%%%%%%%%%%%%%%%%%%%%%%%%%%%%%%%%%%%%%%%%%%%%%%%%%%%%%%%%%%%%%%%%%%%%%%%%%%%%%%%%%%%%%%%%%%%%%%%%%%

%%%%%%%%%%%%%%%%%%%%%%%%%%%%%%
%%%%%%%% Bibliography %%%%%%%%
%%%%%%%%%%%%%%%%%%%%%%%%%%%%%%

\end{document}